\newcommand{\ie}{\hbox{\it{i.e.}, }}
\newlength\fullwidth
\numberwithin{equation}{section}
\DeclareMathSymbol{\leqslant}{\mathalpha}{AMSa}{"36} 
\DeclareMathSymbol{\geqslant}{\mathalpha}{AMSa}{"3E} 
\DeclareMathSymbol{\eset}{\mathalpha}{AMSb}{"3F}     
\def\<{\langle}
\def\>{\rangle}
\renewcommand{\leq}{\;\leqslant\;}                   
\renewcommand{\geq}{\;\geqslant\;}                   
\renewcommand{\le}{\,\leqslant\,}                   
\renewcommand{\ge}{\,\geqslant\,}                   
\renewcommand{\b}{\beta}
\def\0{\textbf{0}}
\newcommand{\eps}{\epsilon}
\def\1{\ifmmode {1\hskip -3pt \rm{I}} \else {\hbox {$1\hskip -3pt \rm{I}$}}\fi}
\newcommand{\var}{\operatorname{Var}}
\newcommand{\tc}{\thinspace |\thinspace}
\newcommand{\id}{\mathbbm{1}}
\newcommand{\trel}{T_{\rm rel}}
\renewcommand{\b}{\beta}
\renewcommand{\l}{\lambda}
\renewcommand{\L}{\Lambda}
\renewcommand{\l}{\lambda}
\renewcommand{\a}{\alpha}
\renewcommand{\d}{\delta}
\renewcommand{\t}{\tau}
\newcommand{\g}{\gamma}
\newcommand{\G}{\Gamma}
\newtheorem{theorem}{Theorem}[section]
\newtheorem*{theorem*}{Theorem}
\newtheorem{lemma}[theorem]{Lemma}
\newtheorem{proposition}[theorem]{Proposition}
\newtheorem{corollary}[theorem]{Corollary}
\newtheorem{claim}[theorem]{Claim}
\newtheorem{maintheorem}{Theorem}
\newtheorem{conjecture}[maintheorem]{Conjecture}
\newtheorem*{question*}{Question}
\newtheorem{assumption}[theorem]{Assumption}
\theoremstyle{definition}
\newtheorem{definition}[theorem]{Definition}
\newtheorem*{remark*}{Remark}
\newtheorem*{idefinition*}{Definition}
\newtheorem{remark}[theorem]{Remark}
\newcommand{\cC}{\ensuremath{\mathcal C}}
\newcommand{\cD}{\ensuremath{\mathcal D}}
\newcommand{\cE}{\ensuremath{\mathcal E}}
\newcommand{\cL}{\ensuremath{\mathcal L}}
\newcommand{\cM}{\ensuremath{\mathcal M}}
\newcommand{\cP}{\ensuremath{\mathcal P}}
\newcommand{\cR}{\ensuremath{\mathcal R}}
\newcommand{\cS}{\ensuremath{\mathcal S}}
\newcommand{\cU}{\ensuremath{\mathcal U}}
\newcommand{\cX}{\ensuremath{\mathcal X}}
\newcommand{\bbE}{{\ensuremath{\mathbb E}} }
\newcommand{\bbH}{{\ensuremath{\mathbb H}} }
\newcommand{\bbL}{{\ensuremath{\mathbb L}} }
\newcommand{\bbN}{{\ensuremath{\mathbb N}} }
\newcommand{\bbP}{{\ensuremath{\mathbb P}} }
\newcommand{\bbQ}{{\ensuremath{\mathbb Q}} }
\newcommand{\bbR}{{\ensuremath{\mathbb R}} }
\newcommand{\bbZ}{{\ensuremath{\mathbb Z}} }
\newcommand{\Z}{{\ensuremath{\mathbb Z}} }
\newcommand{\Ber}{{\rm Ber}}
\let\a=\alpha \let\b=\beta   \let\d=\delta  
 \let\g=\gamma \let\h=\eta      \let\l=\lambda
      \let\o=\omega      
  \let\s=\sigma \let\t=\tau   
   \let\G=\Gamma  \let\L=\Lambda 
\let\O=\Omega
\renewcommand{\to}{\rightarrow}
\newcommand{\ds}{\displaystyle}
\newcommand{\ol}{\overline}
\let\epsilon\varepsilon
\let\eps\varepsilon
\def\Ex{\mathbb{E}}
\def\H{\mathbb{H}}
\def\Qb{\mathbb{Q}}
\def\Z{\mathbb{Z}}
\def\S{\mathcal{S}}
\def\U{\mathcal{U}}
\def\stab{\mathcal{S}}
\def\<{\langle}
\def\>{\rangle}
\def\0{\mathbf{0}}
\begin{document}

\title[]{Universality results for kinetically constrained spin models\\in two dimensions} 
\author[Fabio Martinelli]{Fabio Martinelli}
\address{Dipartimento di Matematica e Fisica, Universit\`a Roma Tre, Largo S.L. Murialdo 00146, Roma, Italy}\email{martin@mat.uniroma3.it}

\author[Robert Morris]{Robert Morris}
\address{IMPA, Estrada Dona Castorina 110, Jardim Bot\^{a}«Ïico, Rio de Janeiro, 22460-320, Brazil}\email{rob@impa.br}

\author[Cristina Toninelli]{Cristina Toninelli}
\address{Laboratoire de Probabilit\'es, Mod\'elisation et Statistique,
  CNRS-UMR 7599 Universit\'es Paris VI-VII 4, Place Jussieu F-75252 Paris Cedex 05 France}\email{cristina.toninelli@upmc.fr}

\thanks{This work has been supported by the ERC Starting Grant 680275 ``MALIG'', ANR-15-CE40-0020-01
and by the PRIN 20155PAWZB ``Large Scale Random Structures''. RM is also partially supported by CNPq (Proc.~303275/2013-8) and by FAPERJ (Proc.~201.598/2014).}
\subjclass[2010]{Primary {60K35}, secondary 60J27}

\keywords{Glauber dynamics, kinetically constrained models, spectral gap, bootstrap percolation, universality}
  
\begin{abstract}
Kinetically constrained models (KCM) are reversible interacting particle systems on $\mathbb Z^d$ with continuous time Markov dynamics of Glauber type, which represent a natural stochastic (and non-monotone) counterpart of the family of cellular automata known as $\mathcal U$-bootstrap percolation. KCM also display some of the peculiar features of the so-called ``glassy dynamics'', and as such they are extensively used in the physics literature to model the liquid-glass transition, a major and longstanding open problem in condensed matter physics.

We consider two-dimensional KCM with update rule $\cU$, and focus on proving universality results for the mean infection time of the origin, in the same spirit as those recently established in the setting of $\cU$-bootstrap percolation. 
We first identify what we believe are the correct universality classes, which turn out to be different from those of  $\cU$-bootstrap percolation. We then prove universal upper bounds on the mean infection time within each class, which we conjecture to be sharp up to logarithmic corrections. In certain cases, including all supercritical models, and the well-known Duarte model, our conjecture has recently been confirmed in~\cite{MMT}. In fact, in these cases our upper bound is sharp up to a constant factor in the exponent.
For certain classes of update rules, it turns out that the infection time of the KCM diverges much faster {{than}} for the corresponding $\cU$-bootstrap process when the equilibrium density of infected sites goes to zero. This is due to the occurrence of energy barriers which determine the dominant behaviour for KCM, but which do not matter for the monotone bootstrap dynamics.
\end{abstract}

\maketitle
\tableofcontents

\section{Introduction}
Kinetically constrained models (KCM) are interacting particle systems on the integer lattice $\mathbb Z^d$, which were introduced in the physics literature in the 1980s in order to model the liquid-glass transition (see e.g.  \cites{Ritort,GarrahanSollichToninelli} for reviews), a major and still largely open problem in condensed matter physics. The main motivation for the ongoing (and extremely active) research on KCM is that, despite their simplicity, they feature some of the main signatures of a super-cooled liquid near the glass transition point. 

A generic KCM is a continuous time Markov process of Glauber type defined as follows. A configuration $\o$ is defined by assigning to 
each site $x\in\mathbb Z^d$ an occupation variable $\omega_x\in\{0,1\},$ corresponding to an empty or occupied site respectively. Each site waits an independent, mean one, exponential time and then, iff  a certain local constraint is satisfied by the current configuration $\o$, its occupation variable is updated to be occupied with rate $p$ and to empty with rate $q=1-p$. 
All the constraints that have been considered in the physics literature belong to the following general class \cite{CMRT}.

Fix an {\sl update family} $\,\mathcal U=\{X_1,\dots,X_m\}$, that is, a finite collection of finite subsets of $\mathbb Z^d\setminus \{ \mathbf{0} \}$. Then $\o$ satisfies the constraint at site $x$ if there exists $X\in \cU$ such that $\o_y=0$ for all $y\in X+x$.
Since each update set  belongs to $\mathbb Z^d\setminus \{ \mathbf{0} \}$,
the constraints never depend on the state of the to-be-updated site. As a consequence, the product Bernoulli($p$) measure $\mu$ is a reversible invariant measure, and the process started at $\mu$ is stationary. 
Despite this trivial equilibrium measure, however, KCM display an
extremely rich behaviour which is qualitatively different from that of
interacting particle systems with non-degenerate birth/death rates
(e.g. the stochastic Ising model). This behaviour
    includes the key dynamical features of real glassy materials:
    anomalously long mixing times~\cites{Aldous,CMRT,MT}, aging and dynamical heterogeneities~\cite{FMRT-cmp}, and ergodicity breaking transitions corresponding to percolation of blocked structures~\cite{GarrahanSollichToninelli}. Moreover, proving the above results rigorously turned out to be a surprisingly challenging task, in part due to the fact that several of the classical tools typically used to analyse reversible interacting particle systems (e.g., coupling, censoring, logarithmic Sobolev inequalities) fail for KCM.

KCM can be also viewed  as a natural non-monotone and stochastic counterpart of $\mathcal U$-bootstrap percolation, a {{well-studied}} class of discrete cellular automata, {{see~\cite{BBPS,BDMS,BSU}.}} For $\mathcal U$-bootstrap on $\mathbb Z^d$, given a configuration of ``infected" sites $A_t$ at time $t$, infected sites remain infected, and a site $v$ becomes infected at time $t + 1$ if the translate by $v$ of one of the sets in $\cU$ belongs to $A_t$. One then defines the final infection set $[A]_\cU:= \bigcup_{t=1}^\infty A_t$ and the \emph{critical probability} of the $\cU$-bootstrap process on $\Z^d$ to be
\begin{equation}\label{def:qc:bootstrap}
q_c\big( \Z^d, \cU \big) := \inf \Big\{ q \,:\, \bbP_q\big( [A]_\cU = \Z^d \big) =1 \Big\},
\end{equation}
where $\bbP_q$ denotes the product probability measure on $\Z^d$ with density $q$ of infected sites.  
The following key connection between $\mathcal U$-bootstrap percolation and KCM has been established by Cancrini, Martinelli, Roberto and Toninelli~\cite{CMRT}: KCM processes are ergodic with exponentially decaying time auto-correlations for $q>q_c\big( \Z^d, \cU \big)$, and they are not ergodic for $q<q_c\big( \Z^d, \cU \big)$. More precisely, the results of \cite{CMRT} prove that the \emph{relaxation time} $\trel(q;\cU)$ (see Definition \ref{def:PC}) 
and the \emph{mean infection time}\footnote{The mean infection time is very close to the \emph{persistence time} in the physics literature} $\mathbb E_{\mu}(\tau_0)$ (i.e. the mean over the stationary KCM process of the first time at which the origin becomes empty) are finite for $q>q_c\big( \Z^d, \cU \big)$ and infinite for $q< q_c\big( \Z^d, \cU \big)$.
Both from a physical and mathematical point of view, a key question is thus to determine the divergence of the time scales $\trel(q;\cU)$ and $\mathbb E_{\mu}(\tau_0)$ as $q\downarrow q_c(\Z^d,\cU)$. 
We will now briefly review the known results, which show that KCM exhibit a very
large variety of possible scalings depending on the details of the update family $\cU$.

We {{begin}} by discussing one of the most extensively studied KCM, which was introduced by J\"ackle and Eisinger \cite{JACKLE}: the so-called \emph{East model}. This model has update family $\mathcal{U}= \big\{ \{-\vec e_1,\},\dots,\{-\vec e_d\} \big\}$, so in the one-dimensional setting $d = 1$ a site can update iff it is the neighbour ``to the east" of an empty site.  It is not difficult to see that in any dimension $q_c(\Z^d,\cU)=0$.
For $d = 1$, it was first proved in~\cite{Aldous} that the relaxation time $\trel(q)$ is finite for any $q \in (0,1]$, and it was later shown (see~\cites{CFM,Aldous,CMRT}) that it diverges as 
$$\exp\bigg( \big( 1 + o(1) \big) \frac{\log(1/q)^2}{2\log2} \bigg)$$
as $q\downarrow 0$. A similar scaling was later proved in any dimension $d \ge 1$, see~\cite{CFM2}. 

Another well-studied KCM, introduced by Friedrickson and Andersen \cite{FH}, is the $k$-facilitated model (FA-kf), whose update family consists of the $k$-sets of nearest neighbours of the origin: a site can be updated iff it has at least $k$ empty nearest neighbours. In this case it was proved in~\cite{vanEnter,BPd} that $q_c(\Z^d,\cU) = 0$ for all $1 \le k \le d$, whereas $q_c(\Z^d,\cU) = 1$ for all $k > d$. Moreover, the relaxation time $\trel(q)$ diverges as $1/q^{\Theta(1)}$ when $k = 1$~\cite{CMRT,AS}, and as a $(k-1)$-times iterated exponential of $q^{-1/(d-k+1)}$ when $2 \le k \le d$~\cite{MT}. The above scalings also hold for the mean infection time $\mathbb E_{\mu}(\tau_0)$.

The above model-dependent results (which are, in fact, the only ones that have
been proved so far) include a large diversity of possible scalings
of the mean infection time, together with a strong sensitivity to the
details of the update family $\cU$. Therefore, a very natural ``universality'' question emerges: 

\begin{question*}
Is it possible to group all possible update families $\cU$ into distinct classes, in such a way that all members of the same
  class  induce the same divergence of the mean infection time as $q$
  approaches from above the critical value $q_c(\mathbb Z^d,\cU)$?
\end{question*}

Such a general question has not been addressed so far, even in the physics literature: physicists lack a general criterion to predict the different scalings. This fact is particularly unfortunate since, due to the anomalous and sharp divergence of times, numerical simulations often cannot give clear cut and reliable answers. Indeed, some of the rigorous results recalled above corrected some false conjectures that were based on numerical simulations. 


The universality question stated above has, however, being addressed and successfully solved for two-dimensional $\cU$-bootstrap percolation (see~\cite{BSU,BBPS,BDMS}, or~\cite{Robsurvey} for a recent review).
The update families $\cU$ were classified by Bollob\'as, Smith and Uzzell~\cite{BSU} into three universality classes: \emph{supercritical}, \emph{critical} and \emph{subcritical} (see Definition \ref{def:stable}), according to a simple geometric criterion. They also proved in~\cite{BSU} that $q_c\big( \Z^2, \cU \big) = 0$ if $\cU$ is supercritical or critical, and it was proved by Balister, Bollob\'as, Przykucki and Smith~\cite{BBPS} that $q_c\big( \Z^2, \cU \big) > 0$ if $\cU$ is subcritical. For critical update families $\cU$, the scaling (as $q \downarrow 0$) of the typical infection time of the origin starting from $\bbP_q$ was determined very precisely by Bollob\'as, Duminil-Copin, Morris and Smith~\cite{BDMS} (improving bounds obtained in~\cite{BSU}), and various universal properties of the dynamics were obtained.

In this paper we take {{an}} 
important step towards establishing a similar universality picture for
two-dimensional KCM with supercritical or critical update family
$\cU$. Using a geometric criterion, we propose a classification of the
two-dimensional update families into universality classes, which is
inspired by, but at the same time quite different from, that
established for bootstrap percolation. More precisely, we classify a
supercritical update family $\cU$ as being \emph{supercritical
  unrooted} or \emph{supercritical rooted} and a critical $\cU$ as
being \emph{$\a$-rooted} or \emph{$\b$-unrooted}, where $\a \in \bbN$
and $\a \leq \b \in \bbN \cup \{\infty\}$ are called the
\emph{difficulty} and the \emph{bilateral difficulty} of $\cU$
respectively (see Definitions~\ref{def:rooted}
and~\ref{def:alpha:rooted}). We then prove {{(see Sections
    3-7)}}
 the following two main universality results (see Theorems~\ref{mainthm:1} and~\ref{mainthm:2} in Section \ref{sec:results}) on the mean infection time $\bbE_\mu(\t_0)$.\\

\noindent
{\bf Supercritical KCM.}  
{\sl Let $\cU$ be a supercritical two-dimensional update family. Then, as $q \to 0$,
\begin{enumerate}
\item[$(a)$] if $\cU$ is unrooted
$$\bbE_\mu(\t_0) \leq q^{-O(1)};$$
\item[$(b)$] if $\cU$ is rooted, 
$$\bbE_\mu(\t_0) \leq \exp\Big( O\big( \log q^{-1} \big)^2 \Big).$$
\end{enumerate}}

\pagebreak
\noindent
{\bf Critical KCM.}  
{\sl Let $\cU$ be a critical two-dimensional update family with difficulty $\a$ and bilateral difficulty $\b$. Then, as $q \to 0$,
\begin{enumerate}[(a)]
\item if $\cU$ is $\a$-rooted
$$\bbE_\mu(\t_0) \leq \exp\Big( q^{-2\a} \big( \log q^{-1} \big)^{O(1)} \Big);$$
\item if $\cU$ is $\b$-unrooted
$$ \bbE_\mu(\t_0) \leq \exp\Big( q^{-\b} \big( \log q^{-1} \big)^{O(1)} \Big).$$
\end{enumerate}
}
\medskip

Even though the theorems above only establish universal \emph{upper bounds} on $\bbE_\mu(\t_0)$, we conjecture that our bounds provide the correct scaling up to logarithmic corrections. This has recently been proved for supercritical models in~\cite{MMT}. For critical update families, the bound $\mathbb E_{\mu}(\tau_0)=\Omega(T_{\mathcal U})$ (see~\cite{MT}*{Lemma 4.3}), where $T_\cU$ denotes the median infection time of the origin for the $\cU$-bootstrap process at density $q$, together with the results of~\cite{BDMS} on $T_{\mathcal U}$, provide a matching lower bound for all $\beta$-unrooted models with $\alpha = \beta$ (for example, the FA-2f model).
In particular, these recent advances combined with the above theorems prove two conjectures that we put forward in~\cite{Robsurvey}. Among the $\alpha$-rooted models, those which have been considered most extensively in the literature are the Duarte and modified Duarte model (see~\cite{Duarte,Mountford,BCMS-Duarte}), for which $\alpha=1$ and $\beta=\infty$. In~\cite{MMT}, using very different tools and ideas from those in this paper, a lower bound on $\bbE_\mu(\t_0)$ was recently obtained for both models that matches our upper bound, including the logarithmic corrections, yielding $\bbE_\mu(\t_0) = \exp\big( \Theta\big( q^{-2} ( \log 1/q)^4 \big) \big)$.

The above results imply that for all supercritical rooted KCM, and also for the Duarte-KCM, the mean infection time diverges much faster than the median infection time for the corresponding $\cU$-bootstrap process, which obeys $T_{\mathcal U}\sim 1/q^{\Theta(1)}$ for supercritical models~\cite{BSU}, and 
$T_{\mathcal U}\sim\exp\big( \Theta\big( q^{-1} ( \log 1/q)^2 \big) \big)$ for the Duarte model~\cite{Mountford}.  
This is a consequence of the fact that for these KCM the infection time is not well-approximated by the number of updates needed to infect the origin (as it is for bootstrap percolation), but is the result of a much more complex mechanism. In particular, the visits of the process to regions of the configuration space with an anomalous amount of infection (borrowing from physical jargon we may call them ``energy barriers") are heavily penalized and require a very long time to actually take place.

Providing an insight into the heuristics and/or the key steps of the proofs at this stage, before providing a clear definition of the geometrical quantities involved, would inevitably be rather vague. We therefore defer these explanations to Section~\ref{sec:roadmap}. We can, however, state two high-level ingredients. The first one consists in identifying, for each class of update families $\cU,$ an ``efficient" (and potentially optimal) dynamical strategy for the difficult (i.e., unlikely) task of infecting the origin. This is necessarily more complex {{than}} the growth of the corresponding $\cU$-bootstrap process, since an efficient strategy must necessarily feature both infection and healing in order to avoid crossing excessively high energy barriers. The second ingredient consists in using the above strategy as a \emph{guide},\footnote{In this respect our situation shares some similarities with other large deviations problems, where an imagined optimal dynamical strategy has the role of suggesting and motivating several, otherwise mysterious, analytic steps.} without actually implementing it, for the analytic technique introduced in~\cite{MT} by two of the authors of the present paper, which allows one to bound the relaxation time $\trel(q;U)$. In~\cite{MT} this technique was successfully applied to the FA-kf model, with the imagined mechanism for infecting the origin being a large droplet of infected sites moving as a random walk in {{a suitable (evolving)}} random environment of sparse infection. Here we have to go well beyond the method of~\cite{MT}, since the random walk picture does not apply to rooted models. Our main novelty is a new and more complex analytic approach to bound $\trel(q,\cU)$ which is inspired by the East dynamics (see Section~\ref{sec:roadmap} for more details).

\subsection{Notation}

We gather here (for the reader's convenience) some of the standard notation that we use throughout the paper. First, recall that we write $\mu$ for the Bernoulli product measure $\otimes_{x \in \bbZ^2}{\rm Ber}(p)$ on $\bbZ^2$, where $q = 1 - p$ will always be assumed to be sufficiently small (depending on the update family $\cU$).  

If $f$ and $g$ are positive real-valued functions of $q$, then we will write $f = O(g)$ if there exists a constant $C > 0$ (depending on $\cU$, but \emph{not} on $q$) such that $f(q) \le C g(q)$ for every sufficiently small $q > 0$. We will also write $f(q) = \O(g(q))$ if $g(q) = O(f(q))$ and $f(q) = \Theta(g(q))$ if both $f(q) = O(g(q))$ and $g(q) = O(f(q))$. 

All constants, including those implied by the notation $O(\cdot)$, $\O(\cdot)$ and $\Theta(\cdot)$, are quantities that may depend on the update family $\cU$ (and other quantities where explicitly stated) but not on the parameter $q$. If $c_1$ and $c_2$ are constants, then $c_1 \gg c_2 \gg 1$ means that $c_2$ is sufficiently large, and $c_1$ is sufficiently large depending on $c_2$. Similarly, $1 \gg
c_1 \gg c_2 > 0$ means that $c_1$ is sufficiently small, and $c_2$ is sufficiently small depending on $c_1$. Finally, we will use the standard notation $[n] = \{1,\ldots,n\}$.




\section{Universality classes for KCM and main results}

In this section we will begin by recalling the main universality results for
bootstrap cellular automata. We will then define the KCM process associated
to a bootstrap update family, introduce its universality classes, and state our main results about its scaling near criticality. To finish, we will provide an outline of the heuristics behind our main theorems, and a sketch of their proofs.

\subsection{The bootstrap monotone cellular automata and its universality properties}\label{sec:boot}

Let us begin by defining a large class of two-dimensional monotone cellular automata, which were recently introduced by Bollob\'as, Smith and Uzzell~\cite{BSU}. 
 
 \pagebreak
 
\begin{definition}\label{def:Uboot}
Let $\cU = \{ X_1,\ldots,X_m \}$ be an arbitrary finite collection of finite subsets of $\Z^2 \setminus \{ \0 \}$. The \emph{$\cU$-bootstrap process} on $\Z^2$ is defined as follows: given a set $A \subset \Z^2$ of initially \emph{infected} sites, set $A_0 = A$, and define for each $t \geq 0$, 
\begin{equation}\label{eq:def:Uboot:At}
A_{t+1} = A_t \cup \big\{ x \in \Z^2 \,:\, X + x \subset A_t \text{ for some } X \in \cU \big\}.
\end{equation}
We write $[A]_\cU = \bigcup_{t \ge 0} A_t$ for the \emph{closure} of $A$ under the $\cU$-bootstrap process.
\end{definition}

Thus, a vertex {{$x$}} becomes infected at time $t + 1$ if the translate
by {{$x$}} of one of the sets in $\cU$ (which we refer to as the
\emph{update family}) is already entirely infected at time~$t$, and
infected vertices remain infected forever. For example, if we take
$\cU$ to be the family of $2$-subsets of the set of nearest neighbours of the origin, we obtain the classical $2$-neighbour bootstrap process, which was first introduced in 1979 by Chalupa, Leath and Reich~\cite{CLR}. 
One of the key
insights of Bollob\'as, Smith and Uzzell~\cite{BSU} was that, at
least in two dimensions, the typical global behaviour of the
$\cU$-bootstrap process acting on random initial sets should be determined by the
action of the process on discrete half-planes. 

For each unit vector $u \in S^1$, let 
$\H_u := \{x \in \Z^2 : \< x,u \> < 0 \}$
denote the discrete half-plane whose boundary is perpendicular to $u$. 

\begin{definition}\label{def:stable}
The set of \emph{stable directions} is
$$\stab = \stab(\U) = \big\{ u \in S^1 \,:\, [\H_u]_\U = \H_u \big\}.$$
The update family $\cU$ is:
\begin{itemize}
\item \emph{supercritical} if there exists an open semicircle in $S^1$ that is disjoint from $\cS$, \vspace{0.1cm}
\item \emph{critical} if there exists a semicircle in $S^1$ that has finite intersection with $\cS$, and if every open semicircle in $S^1$ has non-empty intersection with $\cS$, \vspace{0.1cm}
\item \emph{subcritical} if every semicircle in $S^1$ has infinite intersection with $\cS$. 
\end{itemize}
\end{definition}

The first step towards justifying this trichotomy is given by the following theorem, which was proved in~\cites{BSU,BBPS}. Recall from~\eqref{def:qc:bootstrap} the definition of $q_c\big( \Z^2, \cU \big)$, the critical probability of the $\cU$-bootstrap process on $\Z^2$.


\begin{theorem}
If $\cU$ is a supercritical or critical two-dimensional update family, then $q_c\big( \Z^2, \cU
\big)=0$, whereas if $\cU$ is subcritical then $q_c\big( \Z^2, \cU \big) > 0$.
\end{theorem}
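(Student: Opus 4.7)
\medskip

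\noindent\textbf{Proof plan.}

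The three assertions call for rather different arguments, and I would treat them separately.

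\emph{Supercritical case.} My plan is to exploit the existence of an open semicircle $C \subset S^1$ disjoint from $\cS$: every half-plane $\H_u$ with $u \in C$ infects a strictly larger set under $\cU$. Choose two unit vectors $u_1,u_2$ bounding the closed complementary arc. I would build a ``parallelogram-like'' growth template $D_L$ bounded on two sides by $\{\langle x,u_1\rangle=0\}$ and $\{\langle x,u_2\rangle=0\}$ and having diameter $L$ in the directions of $C$. The main lemma would be that, for $L$ large enough (depending only on $\cU$), there is a finite set $S_L \subset \Z^2$ such that $[D_L \cup S_L]_\cU \supset D_{L+1}$, i.e.\ any large enough template grows deterministically with the help of a bounded ``catalyst''. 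Iterating, a single translate of a finite seed forces $[A]_\cU = \Z^2$; since for every $q>0$ such a translate appears in $A$ with $\bbP_q$-probability one, this gives $q_c(\Z^2,\cU) = 0$. The geometric input is the equivalent of Bollob\'as--Smith--Uzzell's observation that supercritical families admit a ``one-directional'' growth mechanism.

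\emph{Critical case.} Here the plan is the classical ``critical droplet'' strategy. Since every open semicircle meets $\cS$ but some closed semicircle meets it only finitely, the set $\cS$ is a finite union of points and arcs. Each isolated stable direction $u$ carries a finite ``difficulty'': a strip along $u$ can be crossed by the dynamics once a bounded number of extra infected sites appear on its side. I would introduce a length scale $L = L(q)$ (of doubly-exponential type in $1/q$ for the most hostile families) and show by induction on scales, in the spirit of the Aizenman--Lebowitz argument, that a fully infected polygon of diameter $L$ grows with positive probability (under $\bbP_q$) to infect $\Z^2$: each side is pushed forward using random sprinkled infection, with the isolated stable directions paying the dominant cost. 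A translation-invariance/ergodicity argument then promotes this to probability one of infecting the origin, yielding $q_c(\Z^2,\cU) = 0$.

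\emph{Subcritical case.} This is the part I expect to be most difficult, because now every semicircle in $S^1$ contains infinitely many stable directions, so no bounded seed and no large droplet can grow ``unassisted'' in all directions. My plan is a multi-scale blocking construction. Using the density of $\cS$, I would fix a large finite set $\{u_1,\ldots,u_N\} \subset \cS$ whose associated half-planes $\H_{u_i}$ have the property that any half-plane $\H_u$ with $u$ close to $u_i$ is stable (a ``robust stability'' lemma one must establish from Definition~\ref{def:stable}). Using these directions I would try to build, for $q$ small enough, a random convex polygonal annulus around $\0$ whose sides are aligned with $u_1,\ldots,u_N$ and such that, with positive probability, each side is sufficiently sparsely infected that the union of the initial infection outside together with any infection it can produce stays disjoint from the inner region. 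A sprinkling/independence argument then gives $\bbP_q(\0 \notin [A]_\cU) > 0$ for small $q$, hence $q_c(\Z^2,\cU) > 0$. The main obstacle will be ensuring that the local sparseness conditions on the $N$ sides are simultaneously satisfiable and that they truly block every possible infection path: one must control that no ``diagonal'' chain of infection can snake through a corner where two stable half-planes meet, which I would handle by taking $N$ large enough that the polygonal boundary has small turning angles at every vertex, and by leaving a buffer zone between consecutive sides in which additional sparseness is imposed.
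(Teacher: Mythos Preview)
The paper does not itself prove this theorem; it quotes it from \cite{BSU} (for the supercritical and critical cases) and \cite{BBPS} (for the subcritical case). Your plans for the first two cases are broadly in the spirit of \cite{BSU}, but each contains an imprecision. In the supercritical case, no auxiliary ``catalyst'' $S_L$ should appear: once $D_L$ is large enough one has $[D_L]_\cU \supset D_{L+1}$ outright, because every direction in the open semicircle $C$ is unstable; if a fresh $S_L$ were genuinely needed at each scale, your conclusion that a single finite seed infects $\Z^2$ would not follow. In the critical case, the relevant droplet scale is polynomial in $1/q$ (of order $q^{-\alpha}\log(1/q)$), and the infection time singly exponential in a power of $1/q$; doubly-exponential scales occur only in higher dimensions.

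The subcritical plan has a more substantive gap. First, ``sufficiently sparsely infected'' cannot be right: a single initially infected site inside the annulus can cooperate with a fully infected exterior, so you need a barrier that starts \emph{entirely} healthy. More seriously, the existence of a finite polygon $B$ with $[\Z^2\setminus B]_\cU \cap B = \emptyset$ is precisely the hard content of \cite{BBPS}, and it does not follow from merely choosing many stable directions with small turning angles. Stability of $\H_u$ says nothing about a finite edge with outer normal $u$: an update set $X\in\cU$ may fit entirely in the wedge between two nearly parallel stable lines and infect the corner regardless of the angle. The argument in \cite{BBPS} exploits that stable directions accumulate in \emph{every} semicircle, and builds the barrier via a delicate construction specific to $\cU$; your ``robust stability lemma'' and buffer-zone idea point in the right direction but are not yet a workable route to that construction.
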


For supercritical and critical update families, the main question is therefore to determine the scaling as
$q\to 0$ of the typical time it takes to infect the origin.  

\begin{definition}
The \emph{typical infection time} at density $q$ of an update family $\cU$ is defined to be
\[
T_{\cU} \, = \, T_{q,\,\cU} \, := \, \inf\bigg\{ t \ge 0 \,:\, \bbP_q\big( \0 \in A_t \big) \geq \frac 12 \bigg\},
\]  
where (we recall) $\bbP_q$ indicates that every site is included in $A$ with probability $q$, independently from all other sites, and $A_t$ was defined in~\eqref{eq:def:Uboot:At}. We will write $T_{\cU}$, omitting the suffix $q$ from the notation, whenever there is no risk of confusion.
\end{definition}

In order to state the main result of~\cite{BDMS} we need some additional definitions.
Let ${{\bbQ_1}} \subset S^1$ denote the set of rational directions on the circle, and for each $u \in {{\bbQ_1}}$, let $\ell_u^+$ be the (infinite) subset of the line $\ell_u := \{x \in \Z^2 : \< x,u \> = 0 \}$ consisting of the origin and the sites to the right of the origin as one looks in the direction of $u$. Similarly, let $\ell_u^- := (\ell_u\setminus\ell^+_u) \cup \{ \0 \}$ consist of the origin and the sites to the left of the origin. 
Given a two-dimensional bootstrap percolation update family $\cU$, let $\alpha_\cU^+(u)$ be the minimum (possibly infinite) cardinality of a set $Z \subset \Z^2$ such that $[\bbH_u \cup Z]_\cU$ contains infinitely many sites of $\ell_u^+$, and define $\alpha_\cU^-(u)$ similarly (using $\ell_u^-$ in place of $\ell_u^+$). 

\begin{definition}\label{def:alpha}
Given $u \in {{\bbQ_1}}$, the \emph{difficulty} of $u$ (with respect to $\cU$) is\footnote{In order to slightly simplify the notation, and since the update family $\cU$ will always be clear from the context, we will not emphasize the dependence of the difficulty on $\cU$.}
\[
\alpha(u) :=
\begin{cases}
\min\big\{ \alpha_\cU^+(u), \alpha_\cU^-(u) \big\} &\text{if } \alpha_\cU^+(u) < \infty \text{ and } \alpha_\cU^-(u)<\infty \\
\hfill \infty \hfill & \text{otherwise.}
\end{cases}
\]
Let $\cC$ denote the collection of open semicircles of $S^1$. The
\emph{difficulty} of $\cU$ is given by 
\begin{equation}\label{eq:alphaU}
\alpha \, := \, \min_{C \in \cC} \, \max_{u \in C} \, \alpha(u),
\end{equation}
and the \emph{bilateral difficulty} by 
\begin{equation}\label{eq:betaU}
\beta \, := \, \min_{C \in \cC} \, \max_{u \in C} \, \max\big\{ \alpha(u),\a(-u) \big\}.
\end{equation}
A critical update family $\cU$ is \emph{balanced} if there exists a closed semicircle $C$ such that $\alpha(u) \leq \alpha$ for all $u\in C$. It is said to be \emph{unbalanced} otherwise.
\end{definition}

\begin{remark}
If $u \in S^1$ is not a stable direction then $[\H_u]_\U = \bbZ^2$ (see~\cite {BSU}*{Lemma~3.1}), and therefore $\alpha(u) = 0$. Moreover, it was proved in~\cite {BSU}*{Lemma~5.2} (see also~\cite{BDMS}*{Lemma~2.7}) that if $u \in \S(\cU)$ then $\alpha(u) < \infty$ if and only $u$ is an isolated point of $\cS(\cU)$. It follows that $\alpha = 0$ for every supercritical update family, and that $\alpha$ is finite for every critical update family. Observe also that $\alpha \leq \beta \leq \infty$, and that $\beta$ can be infinite even for a supercritical update family (for example, one can embed the one-dimensional East model in two dimensions). A well-studied critical model with $\b$ infinite (and $\a = 1$) is the Duarte model (see~\cite{Duarte,Mountford,BCMS-Duarte}), which has update family 
\begin{equation}\label{def:Duarte}
\cD = \big\{ \{ (-1,0), (0,1) \}, \{(-1,0), (0,-1) \},\{ (0,1), (0,-1) \} \big\}.
\end{equation}
\end{remark}

Roughly speaking, Definition~\ref{def:alpha} says that a direction $u$ has finite difficulty if there exists a finite set of sites that, together with the half-plane $\bbH_u$, infect the entire line $\ell_u$. Moreover, the difficulty of $u$ is at least $k$ if it is necessary (in order to infect $\ell_u$) to find at least $k$ infected sites that are `close' to one
another. If the open semicircle $C$ with $u$ as midpoint contains no direction
of difficulty greater than $k$, then it is possible for a ``critical
droplet" of infected sites to grow in the direction of $u$ without ever finding more than $k$ infected sites close together. 
As a consequence, if the bilateral difficulty is not greater than $k$, {{then}} there exists
a direction $u$ (the midpoint of the optimal semicircle in \eqref{eq:betaU})
such that a suitable critical droplet is able to grow in \emph{both
directions $u$ and $-u$}, without ever finding
more than $k$ infected sites close together.

We are now in a position to state the main results on the scaling of the typical infection time for supercritical and critical update families. The following bounds were proved in~\cite{BDMS} (for critical families) and in~\cite{BSU} (for supercritical families). 

\begin{theorem}
\label{thm:tripartition}
Let $\cU$ be a two-dimensional update family. Then, as $q \to 0$,
\begin{enumerate}[(a)]
\item if $\cU$ is supercritical then 
$$T_\cU \, = \, q^{-\Theta(1)};$$
\item if $\cU$ is critical and balanced with difficulty $\a$, then
$$T_\cU \, = \, \exp\bigg( \frac{\Theta(1)}{q^{\a}} \bigg);$$
\item if $\cU$ is critical and unbalanced with difficulty $\a$, then
$$T_\cU \, = \, \exp\bigg( \frac{\Theta\big( \log (1/q) \big)^2}{q^{\a}} \bigg).$$
\end{enumerate}
\end{theorem}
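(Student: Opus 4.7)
The plan is to prove each of the three estimates via matching upper and lower bounds, with the critical cases (b) and (c) organised around the concept of a \emph{critical droplet} at scale $L = L(q)$ that is roughly $q^{-\alpha}$ up to polylogarithmic corrections. For the supercritical case (a), pick an open semicircle $C \subset S^1$ disjoint from $\cS$; a finite seed of infection then deterministically grows in every direction of $C$ at linear speed, so a bounded neighbourhood of the origin internally spans with probability $q^{\Theta(1)}$ and standard Borel--Cantelli / counting across translates inside a box of side $q^{-O(1)}$ gives the $O(q^{-\Theta(1)})$ upper bound on $T_\cU$. The matching lower bound follows from the observation that infecting $\0$ requires at least $\Omega(1)$ initially infected sites in a bounded neighbourhood, an event of probability $q^{\Theta(1)}$.

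For the upper bounds in (b) and (c), I would first establish a deterministic growth lemma: a rectangular droplet whose sides (in a suitable basis dictated by the optimal semicircle $C$ in \eqref{eq:alphaU}) have length $\Omega(q^{-\alpha})$ will, with very high $\bbP_q$-probability, grow to engulf a much larger box of side $q^{-O(1)} \cdot L$. This uses that every $u \in C$ has $\alpha(u) \le \alpha$: the droplet needs only to find $\alpha$ appropriately-placed infected sites within its reach in order to advance a single row in direction $u$, an event of probability $\Theta(q^\alpha)$. Summing the geometric-type waiting times for each new row yields growth time $\exp\!\big(O(q^{-\alpha})\big)$ in the balanced case, where all directions we need to traverse are in the good semicircle. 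In the unbalanced case, at least one direction the droplet must traverse has difficulty strictly greater than $\alpha$, which forces the droplet to look for infected sets at a smaller probability scale; a sum over the dyadic scales at which such crossings can occur contributes the extra $(\log 1/q)^2$ factor. The estimate on $T_\cU$ is then completed by observing that an "embryo" (a small internally-spanning configuration that fills to a critical droplet) is present within a box of the claimed side length with probability bounded below.

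For the lower bounds I would use a hierarchical covering argument in the spirit of BSU/BDMS. Given any realisation of $A_t$ containing $\0$, decompose the infected component containing $\0$ into a nested sequence of rectangles $R_1 \subset R_2 \subset \cdots \subset R_N$ where each $R_{i+1}$ is obtained from $R_i$ by a single "spanning step" requiring a minimal infected configuration. By the definition of $\alpha$, for any semicircle $C$ of directions through which the growth proceeds there is some $u \in C$ with $\alpha(u) \ge \alpha$, so at each scale of the hierarchy the spanning step costs at least a probability factor $q^{\alpha}$ per unit of linear growth. Summing over scales and hierarchies via a Peierls-style bound on the number of valid hierarchies gives probability at most $\exp\!\big(-\Omega(q^{-\alpha})\big)$ of growing a droplet, from which a union bound over the $q^{-O(1)} \cdot e^{O(q^{-\alpha})}$ possible droplet centres yields the lower bound on $T_\cU$ in (b). For (c), the extra logarithmic factor comes from refining the hierarchy to charge the higher-difficulty directions at each dyadic scale $L_k \asymp 2^k$.

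The main obstacle I expect is the lower bound in the unbalanced case (c), where the correct exponent $(\log 1/q)^2$ (rather than only $\log 1/q$) must be extracted. This requires a careful multi-scale decomposition: at each dyadic scale one must identify a direction of high difficulty that the droplet is forced to cross, and quantify the aggregate probability cost as a sum $\sum_k \alpha(u_k)\log(L_{k+1}/L_k)$ over scales, whose optimum over admissible hierarchies gives the $(\log 1/q)^2$ behaviour. Matching this to the upper bound requires the corresponding two-scale growth construction to hit precisely this exponent, so the combinatorial geometry of critical droplets and the classification of stable directions must be pushed to the level of constants in the exponent.
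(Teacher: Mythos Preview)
The paper does not prove this theorem at all: it is stated as a known result, with part (a) attributed to \cite{BSU} and parts (b)--(c) to \cite{BDMS}, and no argument is given in the present paper. So there is no ``paper's own proof'' to compare against; the relevant comparison is with those references.

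Your high-level plan is broadly aligned with what those papers actually do --- droplet growth along an optimal semicircle using voracious/helping sets of size $\alpha$ for the upper bounds, and a hierarchy-based Peierls argument for the lower bounds --- but as written it is a sketch of intentions rather than a proof, and a few points are loose. For the supercritical lower bound, ``infecting $\0$ requires $\Omega(1)$ infected sites in a bounded neighbourhood'' is not the right formulation: the correct statement is that infection propagates at bounded speed, so $\0 \in A_t$ forces an initially infected site (indeed, a small seed) within distance $O(t)$, and a first-moment bound on such seeds gives $T_\cU \ge q^{-c}$. For the critical upper bounds, the actual argument in \cite{BDMS} requires the full machinery of quasi-stable directions (Definition~\ref{def:quasi-stable} here) to handle corners of the growing droplet and to show that no direction in $C$ obstructs growth; your ``find $\alpha$ appropriately placed infected sites to advance one row'' glosses over exactly the geometric obstruction that makes the problem hard, namely that different stable directions in $C$ require different helping configurations and the droplet must be shaped so that all of them can be served simultaneously. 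Finally, you correctly flag the unbalanced lower bound as the crux, and your description of the mechanism (a forced high-difficulty crossing at each dyadic scale, summed to give $(\log 1/q)^2$) is the right intuition, but turning this into a rigorous bound on the number and cost of admissible hierarchies is the main technical content of \cite{BDMS} and is not something one can simply assert.
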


\begin{remark}
Note that in the above result the bilateral difficulty $\b$ plays no role. This is because in bootstrap percolation a droplet of empty sites only needs to grow in one direction (as opposed to moving back and forth). For KCM, on the other hand, we will see that the ability to move in two opposite directions will play a crucial role.  
\end{remark}

\subsection{General finite range KCM}

In this section we define a 
class of two-dimensional interacting particle systems known as \emph{kinetically constrained models}. As will be clear from what follows, KCM are intimately connected with bootstrap cellular automata.

We will work on the probability space $(\O,\mu)$, where $\O=\{0,1\}^{\bbZ^2}$ and $\mu$ is the product
Bernoulli($p$) measure, and we will be interested in the asymptotic regime $q\downarrow 0,$ where $q=1-p$. Given $\o\in \O$ and $x\in \bbZ^2$, we will say that $x$ is ``empty" (or ``infected") if $\o_x=0$. We will say that $f \colon \O \mapsto \bbR$ is a \emph{local function} if it depends on only finitely many of the variables $\o_x$.

Given a two-dimensional update family $\cU = \{X_1,\dots,X_m\}$, the corresponding KCM is the Markov process on $\O$ associated to the Markov generator 
\begin{equation}
  \label{eq:generator}
(\cL f)(\o)= \sum_{x\in \bbZ^2}c_x(\o)\big( \mu_x(f) - f \big)(\o),
\end{equation}
where $f \colon \O \mapsto \bbR$ is a local function, $\mu_x(f)$ denotes the average of $f$ w.r.t.~the variable $\o_x$, and $c_x$ is the indicator function of the event that there exists an update rule $X\in \cU$ such that $\o_y = 0$ for every $y \in X + x$.

Informally, this process can be described as follows. Each vertex $x\in \bbZ^2$, with rate one and independently across $\bbZ^2$, is resampled from $\big( \{0,1\},{\rm Ber}(p) \big)$ iff {{one of the}} update rules of the $\cU$-bootstrap process at $x$ {{is satisfied}} by the current configuration of the empty sites. In what follows, we will sometimes call such an update a \emph{legal update} or \emph{legal spin flip}. It follows (see~\cite{CMRT}) that $\cL$ {{is}} the generator of a reversible Markov process on $\O$, with reversible measure $\mu$. 

We now define the two main quantities we will use to
    characterize the dynamics of the KCM process. The first
of these is the relaxation time $\trel(q,\cU)$. 

\begin{definition}
\label{def:PC}
We say that $C>0$ is a Poincar\'e constant for a given KCM if, for all local functions $f$, we have
\begin{equation}
  \label{eq:gap}
\var(f) \leq C \, \cD(f),
\end{equation}
where $\cD(f)=\sum_x \mu\bigl(c_x \var_x(f)\bigr)$ is the KCM Dirichlet form of $f$ associated to $\cL$. 
If there exists a finite Poincar\'e constant we then define
\[
\trel(q,\cU):=\inf\big\{ C > 0 \,:\, C \text{ is a Poincar\'e constant for the KCM} \big\}.
\]
Otherwise we say that the relaxation time is infinite.
\end{definition}
A finite relaxation time implies that the reversible measure $\mu$ is mixing for the
semigroup $P_t=e^{t\cL}$ with exponentially decaying time
auto-correlations \cite{Liggett}. More precisely, in that case $\trel(q,\cU)^{-1}$ coincides with the best positive constant $\l$ such that,
\begin{equation}\label{eq:relax:exp:decay}
\var\left(e^{t\cL} f\right)\leq e^{-2\l t}\var(f) \qquad \forall \, f \in L^2(\mu).
\end{equation}
One of the main results of~\cite{CMRT} states that for any $q > 0$ {{we have}} $\trel(q,\cU) < \infty$ for every two-dimensional update family $\cU$ such that $q_c\big( \Z^2, \cU \big) = 0$.
  
The second (random) quantity is the hitting time   
\[
\t_0 = \inf\big\{ t \ge 0 \,:\, \o_0(t) = 0 \big\}.
\] 
In the physics literature the hitting time $\t_0$ is usually referred
to as the \emph{persistence time}, while in the bootstrap
percolation framework it would be more conveniently dubbed the
\emph{infection time}. For our purposes, the most important connection between the mean
infection time $\bbE_\mu(\t_0)$ for the stationary KCM process (\ie with $\mu$ as initial
distribution) and $\trel(q,\cU)$ is as follows (see \cite{Praga}*{Theorem 4.7}): 
\begin{equation}
  \label{eq:mean-infection}
\bbE_\mu(\t_0) \leq \frac{\trel(q,\cU)}{q} \qquad \forall \; q\in (0,1).
\end{equation}
The proof is quite simple. By definition, $\t_0$ is the hitting time of $A = \big\{ \o : \ \o_0 = 0 \big\}$, and it is a standard result (see, e.g.,~\cite{DaiPra}*{Theorem 2}) that $\bbP_\mu(\t_0>t)\le e^{-t\l_A}$, where
\[
\l_A=\inf\big\{ \cD(f) \,:\, \mu(f^2) = 1 \text{ and } f(\o) = 0 \text{ for every } \o \in A \big\}.
\]
Observe that $\var(f) \ge \mu(A) = q$ for any function $f$ satisfying $\mu(f^2)=1$ that is identically zero on $A$.  
This implies that $\l_A \ge q/\trel(q,\cU)$, and so~\eqref{eq:mean-infection} follows.

\begin{remark}
If the initial distribution $\nu$ of the KCM process is different from the invariant measure~$\mu$, then it is only known that $\bbE_\nu(\t_0)$ is finite in a couple of specific cases (the $d$-dimensional East process~\cites{CMST,CFM3}, and the 1-dimensional FA-1f process~\cite{BCMRT}), even under the assumption that $\nu$ is a product Bernoulli($p'$) measure with $p' \neq p$.
\end{remark}

A matching lower bound on $\bbE_\mu(\t_0)$ in terms of $\trel(q,\cU)$ is not known. However, in~\cite{MT}*{Lemma 4.3} it was proved that 
\begin{equation}
  \label{eq:lowbound}
\bbE_\mu(\t_0)=\O(T_{\cU}).
\end{equation}

\subsection{Universality results}\label{sec:results}

We are now ready to define precisely the universality classes for KCM with a supercritical or critical update family. We will also restate (in a more precise form) our main results and conjectures on the scaling of $\bbE_\mu(\t_0)$ as $q \to 0$. We begin with the (much easier) supercritical case.

\begin{definition}\label{def:rooted}
A supercritical two-dimensional update family $\cU$ is said to be
\emph{supercritical rooted} if there exist two non-opposite stable directions in
$S^1$. Otherwise it is called \emph{supercritical unrooted}.
\end{definition} 

Our first main result, already stated in the Introduction, provides an upper bound on $\bbE_\mu(\t_0)$ for every supercritical two-dimensional update family that is (by the results of~\cite{MMT}) sharp up to the implicit constant factor in the exponent. Recall that if $\cU$ is supercritical then $T_{\cU} = q^{-\Theta(1)}$, by Theorem~\ref{thm:tripartition}. 

\begin{maintheorem}[Supercritical KCM]\label{mainthm:1}  
Let $\cU$ be a supercritical two-dimensional update family. Then, as $q \to 0$,
\begin{enumerate}
\item[$(a)$] if $\cU$ is unrooted
$$\bbE_\mu(\t_0) \leq q^{-O(1)} = \exp\Big( O\big( \log T_{\cU} \big) \Big),$$
\item[$(b)$] if $\cU$ is rooted, 
$$\bbE_\mu(\t_0) \leq \exp\Big( O\big( \log q^{-1} \big)^2 \Big) = \exp\Big( O\big( \log T_{\cU} \big)^2 \Big).$$
\end{enumerate}
\end{maintheorem}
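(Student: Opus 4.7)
By \eqref{eq:mean-infection} it is enough to bound the relaxation time: I need $\trel(q,\cU)\le q^{-O(1)}$ in case~(a) and $\trel(q,\cU)\le\exp\!\big(O(\log q^{-1})^2\big)$ in case~(b). My plan is a two-scale renormalization, working on a fine scale of side $\ell=q^{-K}$ (with $K$ a large constant depending on $\cU$) and on a coarse lattice of disjoint $\ell\times\ell$ blocks.

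\textbf{Droplets and block dynamics.} Because $\cU$ is supercritical, there is an open semicircle $C^{\star}\subset S^1$ of unstable directions. The first step is to fix a finite ``droplet'' $D\subset\Z^2$ of diameter $O(\log q^{-1})$ and, for each $u$ in a fixed finite set of directions within $C^{\star}$, a short sequence of legal $\cU$-flips that, using a copy of $D$ in a neighbouring box $B'$ as a catalyst, translates a copy of $D$ inside a box $B$ by one lattice step in direction $u$ without ever leaving a bounded neighbourhood of $D\cup D'$. Such sequences exist because $u$ is unstable: the $\cU$-closure of a small buffer around $D\cup(D+u)$ fills that buffer, and the flips can then be reversed to heal it. Partition $\Z^2$ into boxes of side $\ell$ and declare a block \emph{good} if its configuration contains a copy of $D$; with $K$ large, each block is good with probability at least $1-q^{100}$. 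Following the block-dynamics/canonical-path machinery of \cite{CMRT}, I would then bound the KCM Dirichlet form from below by that of an effective ``block KCM'' on the boolean variables $\eta_B$ (with $\eta_B=1$ iff $B$ is good), whose update family $\cU^{\star}$ encodes which neighbouring blocks must be good in order to unlock a flip of $\eta_B$. The multiplicative cost of this passage is $\ell^{O(1)}=q^{-O(1)}$, coming from a Poincar\'e estimate for the single-box dynamics with a good neighbour that uses the droplet to rewrite an arbitrary box-content in $O(\ell^2)$ legal flips.

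\textbf{Case (a), unrooted.} The unrooted hypothesis implies that the convex hull of $C^{\star}\cup(-C^{\star})$ is all of $\R^2$, so concatenating droplet translations along directions in $C^{\star}$ and $-C^{\star}$ realises every direction on the coarse lattice. Hence $\cU^{\star}$ has no stable direction at all, and the block KCM at density $1-q^{100}$ is comparable to FA-$1$f on $\Z^2$, whose relaxation time is $O(1)$ by the CMRT bisection plus standard estimates. Combined with the $q^{-O(1)}$ fine-scale factor this yields the claimed polynomial bound.

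\textbf{Case (b), rooted, and main obstacle.} The presence of two non-opposite stable directions forces the feasible coarse directions to lie in a single half-plane, making $\cU^{\star}$ a two-dimensional East-type update family. The relaxation time of such a $2$-D East process at block-density $1-q^{100}$ is $\exp(O(\log q^{-1})^2)$ by \cite{CFM2}, which combined with the fine-scale overhead gives the stated stretched-exponential bound. The hard part will be the construction of the fine-scale canonical path in case~(b): the microscopic flips that rewrite a box must be organised so that the Poincar\'e constants of the fine and the East-like coarse dynamics actually compose, rather than interacting wastefully through intermediate configurations of anomalously small $\mu$-mass. This is precisely the point where the East-inspired analytic approach flagged at the end of the introduction becomes indispensable, and it will guide the concrete design of the droplet-moving paths.
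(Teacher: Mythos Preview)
Your overall strategy—renormalize into blocks, compare to an FA-1f or East-type block process, then reduce back to the KCM Dirichlet form—is the right one, but the way you set up the two scales puts all the difficulty in exactly the step you do not carry out.

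First, a concrete error: for a supercritical family the critical droplet has \emph{constant} size (this is the content of Proposition~\ref{prop:bootsc}), not diameter $O(\log q^{-1})$. With $|D|$ of order $\log q^{-1}$ the probability that a fixed translate of $D$ is empty is $q^{\Theta(\log q^{-1})}$, so no box of polynomial side $q^{-K}$ contains an empty copy with probability close to $1$; your claim ``good with probability $\ge 1-q^{100}$'' is then false.

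Second, and more structurally, even with a constant-size $D$ your architecture trivialises the coarse scale and hides everything in the fine-scale reduction. If blocks are good with probability $1-q^{100}$, the block KCM is essentially unconstrained and its relaxation time is $O(1)$ regardless of rooted/unrooted—so the distinction between (a) and (b) must come entirely from the ``$\ell^{O(1)}$'' you assert for the cost of resampling a box of side $q^{-K}$ given a single constant-size droplet in a neighbour. That cost is \emph{not} $\ell^{O(1)}$ in general: it is precisely the cost of moving a droplet across distance $q^{-K}$ under the KCM dynamics, which is $q^{-O(1)}$ in the unrooted case but $q^{-O(\log q^{-1})}$ in the rooted case via an East-like mechanism. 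You acknowledge this at the end as ``the hard part'', but it is the whole proof.

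The paper sidesteps this by taking blocks of \emph{constant} size (independent of $q$), with $G_1=S$ trivially and $G_2=\{\text{block is empty}\}$, so $p_2=q^{O(1)}$. Then the constrained Poincar\'e inequality of Theorem~\ref{thm:CPI}(A) applied directly at this scale gives the factor $T(p_2)=p_2^{-O(1)}=q^{-O(1)}$ in the unrooted case and $\vec T(p_2)=\exp\big(O(\log p_2)^2\big)=\exp\big(O(\log q^{-1})^2\big)$ in the rooted case. The remaining step—converting $\mu\big(\id_{\{V_{i\pm\vec e_1}\text{ empty}\}}\var_{V_i}(f)\big)$ into the true Dirichlet form—is a canonical-path argument (Lemma~\ref{lem:var0}) over a region of \emph{bounded} size, costing only $q^{-O(1)}$. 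In short: the East/FA comparison should happen at the droplet scale, not above it.
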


We next turn to our bounds for critical update families, the proofs of which will require us to overcome a number of significant technical challenges, in addition to those encountered in the supercritical case. In this setting the distinction between critical unrooted and critical rooted is more subtle, and both the difficulty $\a$ and the bilateral difficulty $\b$ (see Definition~\ref{def:alpha}) play an important role. Recall that for a critical update family the difficulty is finite, but that the bilateral difficulty may be infinite.

\begin{definition}\label{def:alpha:rooted}
A critical update family $\cU$ with difficulty $\a$ and bilateral difficulty $\b$ is said to be $\alpha$-\emph{rooted} if $\b\ge 2\a$. Otherwise it is said to be $\beta$-\emph{unrooted}.\footnote{We warn the attentive reader that when $\a<\b<2\a$ the model is here called $\b$-unrooted, while in~\cite{Robsurvey} it was called $\a$-rooted.}
\end{definition}

\pagebreak

The following theorem is the main contribution of this paper.

\begin{maintheorem}[Critical KCM]\label{mainthm:2}
Let $\cU$ be a critical two-dimensional update family with difficulty $\a$ and bilateral difficulty $\b$. Then, as $q \to 0$,
\begin{enumerate}[(a)]
\item if $\cU$ is $\a$-rooted
$$\bbE_\mu(\t_0) \le \exp\Big( O\Big( q^{-2\a} \big( \log q^{-1} \big)^4 \Big) \Big) = \exp\Big( \tilde O\big( \log T_\cU \big)^2 \Big);$$
\item if $\cU$ is $\b$-unrooted
$$\bbE_\mu(\t_0) \le \exp\Big( O\Big( q^{-\b} \big( \log q^{-1} \big)^3 \Big) \Big) = \exp\Big( \tilde O\big( \log T_\cU \big)^{\b/\a} \Big).$$
\end{enumerate}
\end{maintheorem}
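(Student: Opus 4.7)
The first move is the well-known reduction $\bbE_\mu(\t_0)\le \trel(q,\cU)/q$, which concentrates everything on bounding the relaxation time; the extra $1/q$ factor is absorbed in the $(\log q^{-1})^{O(1)}$ corrections. My overall strategy follows the two-level architecture hinted at in the introduction: design a long-range auxiliary Glauber process on a renormalised lattice of ``super-sites'' whose relaxation time can be analysed by classical East/FA-1f technology, and then compare its Dirichlet form to that of the KCM via canonical paths whose congestion is only polynomial in $1/q$.

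The choice of super-sites is dictated by the geometric analysis of critical update families in \cite{BSU,BDMS}. Fix the optimal open semicircle $C$ realising \eqref{eq:alphaU} (in the $\a$-rooted case) or \eqref{eq:betaU} (in the $\b$-unrooted case), choose a midpoint $u\in C$, and tile $\bbZ^{2}$ by boxes $Q_{x}$ of linear size $\ell=(\log q^{-1})^{O(1)}/q^{\a}$ aligned with $u$. On each box define a \emph{super-good} event $\cG_{x}$ consisting of (a) a critical droplet of empty sites and (b) enough extra ``helping'' empty sites scattered in $Q_{x}$ to transport and grow the droplet, via legal KCM moves, to the designated boundaries of $Q_{x}$. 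In the $\a$-rooted case (b) is required only in direction $u$, and one can arrange $\mu(\cG_{x})=\exp\bigl(-q^{-\a}(\log q^{-1})^{O(1)}\bigr)$; in the $\b$-unrooted case (b) is required in both $u$ and $-u$, and the worst direction in the symmetric semicircle $C\cup(-C)$ forces $\mu(\cG_{x})=\exp\bigl(-q^{-\b}(\log q^{-1})^{O(1)}\bigr)$. The auxiliary dynamics then resamples the indicator $\id_{\cG_{x}}$ at rate one under a constraint reflecting the root structure: an East-type directional constraint along $u$ (``the predecessor super-site must be good'') in the $\a$-rooted case, and a symmetric FA-1f-type constraint (``either neighbour along $u$ must be good'') in the $\b$-unrooted case. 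The auxiliary relaxation times are then classical: $\exp\bigl(O(\log(1/p^{\ast}))^{2}\bigr)=\exp\bigl(O(q^{-2\a}(\log q^{-1})^{O(1)})\bigr)$ for East, and $(p^{\ast})^{-O(1)}=\exp\bigl(O(q^{-\b}(\log q^{-1})^{O(1)})\bigr)$ for FA-1f, with $p^{\ast}=\mu(\cG_{x})$.

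The Dirichlet-form comparison $\cD^{\mathrm{aux}}(f)\le q^{-O(1)}\cD(f)$ is obtained by exhibiting, for every legal auxiliary move ``flip $\id_{\cG_{x}}$, enabled by a super-good neighbour at $x\pm e$'', an explicit canonical path of legal KCM spin flips living in $Q_{x}\cup Q_{x\pm e}$. The path borrows the droplet from the good neighbour along its preassigned helping sites, performs the required reconfiguration inside $Q_{x}$, and then reverses the intermediate growth so as to restore $Q_{x\pm e}$. A standard congestion count shows that each intermediate configuration is visited by at most $q^{-O(1)}$ paths and has probability at least $q^{O(1)}\,\mu(\cG_{x})$ times the endpoints, giving the claimed polynomial comparison factor.

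The main obstacle, and where the bulk of Sections~3--7 will live, is this Dirichlet-form comparison in the $\a$-rooted regime. Unlike the FA-$k$f analysis of \cite{MT}, where a droplet moves bilaterally and can be treated as a random walk in a sparse random environment, the East-type constraint here forces every newly created droplet to coexist with a previously created one acting as its helper. The canonical path must therefore orchestrate a carefully staged, essentially two-droplet, choreography whose intermediate configurations never fall below mass $\mu(\cG_{x})^{2}$; controlling this bookkeeping, and in particular proving that the additional logarithmic corrections combine into the claimed $(\log q^{-1})^{4}$, is the East-inspired analytic novelty that goes well beyond the random-walk picture of \cite{MT}.
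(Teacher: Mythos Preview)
Your high-level architecture --- renormalise into blocks, run an East (resp.\ FA-1f) process on a ``super-good'' event of density $p^\ast\approx\exp(-q^{-\a}(\log 1/q)^{O(1)})$ (resp.\ $\exp(-q^{-\b}\ldots)$), then compare Dirichlet forms --- matches the paper's in spirit, but the proposal as written has a genuine gap at the comparison step.

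The problem is the single-event renormalisation. Your auxiliary move is ``resample $Q_x$ when a neighbour $Q_{x\pm e}$ is super-good'', and you propose to implement this by a canonical path that brings the droplet from $Q_{x\pm e}$ into $Q_x$, infects $Q_x$, resamples, and returns. But for a critical update family the droplet (a thin half-ring of height $\Theta(q^{-\a}\log(1/q))$ and width $O(1)$) cannot advance through $Q_x$ unassisted: each step of advance in direction $u$ requires a helping set of $\a$ empty sites sitting in $Q_x$. You have packaged helping sets into the \emph{super-good} event, but $Q_x$ is the block being resampled --- it is \emph{not} assumed super-good, and in particular need not contain any helping sets at all. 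For such $\o$ the canonical path you describe simply does not exist, and the comparison $\cD^{\mathrm{aux}}\le q^{-O(1)}\cD$ fails. Enlarging the droplet so that it can cross $Q_x$ unassisted would force $p^\ast$ to be much smaller than you need.

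The paper's fix is structural: it introduces a \emph{second} event $G_1$ (``good'': helping sets and a staircase are present) with $1-\hat\mu(G_1)=q^{\O(1)}$, on top of the super-good event $G_2\subset G_1$. The constrained Poincar\'e inequality (Theorem~\ref{thm:CPI}(B)) is then proved for this two-event setup: the right-hand side carries indicators that the blocks neighbouring $V_i$ are good \emph{and} one of them is super-good, and the local variance is $\var_{V_i}(f\mid G_1)$ rather than $\var_{V_i}(f)$. This is exactly what makes the comparison go through --- the conditioning on $G_1$ guarantees the helping sets needed for the droplet to traverse. Moreover, the paper does \emph{not} use canonical paths for the block-to-site comparison; it applies a \emph{second} generalised East chain along a sequence of $O(n_1)$ ``fibers'' (translates of the half-ring) inside the block (Proposition~\ref{prop:I1:I2}), which yields a comparison factor $\exp(O(q^{-\a}(\log 1/q)^3))$ --- not polynomial in $1/q$ as you claim, but harmlessly absorbed by the leading $\exp(O(q^{-2\a}(\log 1/q)^4))$.
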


\begin{remark}
\label{rem:Trel}
It will follow immediately from our proof that  the upper bounds  of Theorems~\ref{mainthm:1} and~\ref{mainthm:2}  hold also for the relaxation time $\trel(q,\cU)$.
Indeed, we first establish  upper bounds for the relaxation time, then derive the upper bounds on $\bbE_\mu(\t_0)$ via
\eqref{eq:mean-infection}.  
\end{remark}

It was recently proved in~\cite{MMT} that the upper bounds in Theorem~\ref{mainthm:1} are best possible up to the implicit constant factor in the exponent for all supercritical update families (note that this follows from~\eqref{eq:lowbound} for unrooted models). We conjecture that the bounds for critical models in Theorem~\ref{mainthm:2} are also best possible, though in a slightly weaker sense: up to a polylogarithmic factor in the exponent.

\begin{conjecture}\label{critical:conj}
Let $\cU$ be a critical two-dimensional update family with difficulty $\a$ and bilateral difficulty $\b$. Then, as $q \to 0$,
\begin{enumerate}[(a)]
\item if $\cU$ is $\a$-rooted
$$\bbE_\mu(\t_0) = \exp\Big( q^{-2\a} \big( \log q^{-1} \big)^{\Theta(1)} \Big);$$
\item if $\cU$ is $\b$-unrooted
$$\bbE_\mu(\t_0) = \exp\Big( q^{-\b} \big( \log q^{-1} \big)^{\Theta(1)} \Big).$$
\end{enumerate}
\end{conjecture}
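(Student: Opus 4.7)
The plan is to prove the matching lower bounds, since the upper bounds in the conjecture are already provided by Theorem~\ref{mainthm:2}. Both cases should follow from a bottleneck (Cheeger-type) argument: one exhibits a set $B \subset \O$ of ``critical droplet'' configurations such that every legal KCM trajectory from a $\mu$-typical configuration to the event $\{\o_0 = 0\}$ must pass through $B$, and then uses a dual of the inequality $\l_A \ge q/\trel$ underlying~\eqref{eq:mean-infection}: since a single legal flip can only change the number of infections by one, the Dirichlet conductance across $\partial B$ is bounded by $\mu(B) \cdot \mathrm{poly}(q^{-1})$, giving $\bbE_\mu(\t_0) \ge \mathrm{const}/\mu(B)$ up to polynomial factors in $q^{-1}$.

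The bottleneck $B$ should be chosen to match, configuration for configuration, the output of the upper bound strategy used in Sections~3--7 of the paper. For a $\b$-unrooted model, $B$ consists of configurations containing, inside a box of linear size $\Theta(q^{-1} \log q^{-1})$ centred near the origin, a sufficiently dense cluster of ``$\a$-droplets'' --- the minimum number needed for an efficient bilateral growth strategy to succeed without ever concentrating more than $\b$ empty sites in one place. Standard large-deviation estimates under $\mu$ then yield $\mu(B) \le \exp\bigl( -q^{-\b} (\log q^{-1})^{O(1)} \bigr)$. For an $\a$-rooted model, the condition $\b \ge 2\a$ forces the critical droplet to contain \emph{two} independent growth cores, one for each of two non-opposite half-planes that the infection must traverse, essentially doubling the anomalous infection budget. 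The analogous counting then gives $\mu(B) \le \exp\bigl( -q^{-2\a} (\log q^{-1})^{O(1)} \bigr)$, in exact correspondence with the upper bound's exponent.

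The hard part, and the reason this remains a conjecture rather than a theorem, is the \emph{unavoidability} of $B$. In the bootstrap setting, the monotonicity of the dynamics makes the analogous statement a direct consequence of the Bollob\'as--Smith--Uzzell and Bollob\'as--Duminil-Copin--Morris--Smith classification of droplet growth. In the KCM setting, healing moves allow for potential ``detour'' trajectories that bypass $B$, and excluding them amounts to a quantitative converse to the East-type upper bound strategy of this paper: one must show that \emph{every} sequence of legal flips ending in $\o_0 = 0$ must, at some intermediate time, exhibit a configuration geometrically close to the optimal droplet. For specific update families such converses have been obtained by model-specific couplings (supercritical and Duarte in~\cite{MMT}, FA-$k$f in~\cite{MT}); a uniform treatment of general critical families would likely require a multi-scale renormalisation that assigns to each intermediate configuration a potential counting the number of $\a$-droplets (or pairs thereof, in the rooted case) still required to complete the infection, together with a discrete Lyapunov argument showing that this potential can decrease only at the cost of producing an equilibrium-rare concentration of infections near the current growth front.
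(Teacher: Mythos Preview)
The statement is a \emph{conjecture}, and the paper does not prove it. The paper only records partial evidence: the upper bounds follow from Theorem~\ref{mainthm:2}; the lower bound is known for $\a$-unrooted families (via~\eqref{eq:lowbound} and Theorem~\ref{thm:tripartition}) and for the Duarte model (via~\cite{MMT}). So there is no ``paper's own proof'' to compare against. You correctly identify that the missing ingredient is the lower bound, and your acknowledgement that unavoidability of the bottleneck is the genuine obstruction is exactly right and matches the paper's own stance.

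That said, your heuristic for the origin of the exponent $2\a$ in the $\a$-rooted case is wrong, and this matters because it would lead you to construct the wrong bottleneck set. The factor~$2$ does \emph{not} come from ``two independent growth cores'' forced by non-opposite stable directions. It comes from the East-like energy barrier described in Section~\ref{sec:roadmap}: a single critical droplet has $\Theta(q^{-\a}\log(1/q))$ sites, and to move it across a distance of order $1/\rho$ (where $\rho \approx \exp(-q^{-\a}\log(1/q)^2)$ is the density of super-good blocks) via an East-type mechanism, one must at some point sustain $\Theta(\log(1/\rho)) = \Theta(q^{-\a}\log(1/q)^2)$ such droplets simultaneously. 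The product of these two $q^{-\a}$ factors yields $q^{-2\a}$. A correct bottleneck set $B$ for the rooted case would therefore encode the simultaneous presence of order $q^{-\a}$ disjoint empty droplets along a path, not a single droplet with ``two cores''. This is precisely what is carried out for the Duarte model in~\cite{MMT}, and the difficulty in extending it to general $\a$-rooted families is combinatorial (controlling all possible legal paths, not just the canonical one), not conceptual in the way your sketch suggests.
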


Observe that for $\a$-unrooted update families $\cU$ (i.e., families with $\b = \a$), the lower bound in Conjecture~\ref{critical:conj} follows from Theorem~\ref{thm:tripartition} and~\eqref{eq:lowbound}; in particular Theorem~\ref{mainthm:2} confirms~\cite{Robsurvey}*{Conjecture~2.4}. If $\cU$ is moreover unbalanced, then the upper and lower bounds given by Theorems~\ref{mainthm:2} and~\ref{thm:tripartition} differ by only a single factor of $\log(1/q)$ (in the exponent), and we suspect that in this case the lower bound is correct, see {{Remark~\ref{rmk:losing:log}.}} 
 
\begin{conjecture}\label{unbalanced:conj}
Let $\cU$ be an $\a$-unrooted, unbalanced, critical two-dimensional update family with difficulty $\a$. Then, as $q \to 0$,
$$\bbE_\mu(\t_0) = \exp\Big( \Theta\Big( q^{-\a} \big( \log q^{-1} \big)^2 \Big) \Big).$$
\end{conjecture}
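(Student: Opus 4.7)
The lower bound is essentially free: since an $\a$-unrooted family has $\b=\a$ by definition, in the unbalanced case Theorem~\ref{thm:tripartition}(c) gives $T_\cU = \exp\big(\Theta(q^{-\a}(\log q^{-1})^2)\big)$, and substituting into the universal inequality~\eqref{eq:lowbound} yields $\bbE_\mu(\t_0)=\O(T_\cU)=\exp\big(\O(q^{-\a}(\log q^{-1})^2)\big)$. All the substantive content therefore lies in sharpening the upper bound of Theorem~\ref{mainthm:2}(b): applied with $\b=\a$ it delivers $\bbE_\mu(\t_0)\le \exp\big(O(q^{-\a}(\log q^{-1})^3)\big)$, and the task is to remove one factor of $\log(1/q)$ from the exponent by genuinely exploiting the unbalanced hypothesis.

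My plan is to revisit the proof of Theorem~\ref{mainthm:2}(b) and isolate the source of the extra $\log(1/q)$. A two-dimensional bootstrap--KCM strategy works with an optimal semicircle $C$ witnessing the minimum in~\eqref{eq:betaU}, together with a critical droplet of linear size of order $q^{-\a}\log(1/q)$; the droplet performs a random-walk-like motion along directions of $C$. One factor of $q^{-\a}$ in the exponent comes from the local energy barrier needed to displace the droplet by a single unit, while the factor $(\log q^{-1})^2$ is already present in $T_\cU$ in the unbalanced case and reflects the cost of transverse growth through isolated directions of difficulty strictly greater than $\a$. The third factor of $\log(1/q)$ in Theorem~\ref{mainthm:2}(b) should be viewed as slack in the induction over scales used to assemble a macroscopic infected region, not as reflecting a dynamical mechanism intrinsic to the KCM.

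Concretely, I would try to construct a two-sided, East-type auxiliary dynamics that exploits $\b=\a$ by propagating droplets simultaneously in the directions $u$ and $-u$ of the midpoint of the optimal $C$, thereby avoiding the strictly rooted (one-directional) mechanism in either direction. Comparing this auxiliary process with the full KCM via the Cancrini--Martinelli--Roberto--Toninelli constrained-Poincar\'e machinery, refined as in Sections~3--7 of this paper, would transfer a relaxation-time bound $\trel(q,\cU)\le \exp\big(O(q^{-\a}(\log q^{-1})^2)\big)$ back to the original process; Remark~\ref{rem:Trel} together with~\eqref{eq:mean-infection} then closes the argument.

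The main obstacle, and the place where I expect the bookkeeping to be genuinely delicate, is the interleaving of two-sided growth along the directions of difficulty exactly $\a$ in $C$ with the passage through a transverse direction whose difficulty strictly exceeds $\a$ -- precisely the \emph{unbalanced} feature. In the proof of Theorem~\ref{mainthm:2}(b) these two mechanisms are handled by a uniform induction that overestimates the transverse cost by a factor of $\log(1/q)$. Redesigning the scale hierarchy so that the passage through a high-difficulty direction is amortised against the total distance travelled, rather than paid once per scale, is exactly what is missing, and I expect this refinement -- reminiscent in spirit of the ``crossed-droplet'' analysis of~\cite{BDMS} -- to be the crux of Conjecture~\ref{unbalanced:conj}.
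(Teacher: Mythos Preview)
This statement is a \emph{conjecture} in the paper, not a theorem; there is no proof to compare against. Your lower-bound paragraph is correct and matches exactly the paper's own reasoning in the paragraph following Conjecture~\ref{critical:conj}.

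The substantive part of your proposal --- a plan to remove one $\log(1/q)$ from the upper bound --- contains two misdiagnoses. First, you say the missing $\log$ should be eliminated ``by genuinely exploiting the unbalanced hypothesis''. But unbalancedness plays no role on the upper-bound side: Theorem~\ref{mainthm:2}(b) gives the same $\exp\big(O(q^{-\b}(\log q^{-1})^3)\big)$ for balanced and unbalanced $\b$-unrooted families alike. The only reason the conjecture is restricted to unbalanced families is that this is exactly the case in which the bootstrap lower bound already carries the factor $(\log q^{-1})^2$; for balanced $\a$-unrooted families the lower bound is $\exp(\Theta(q^{-\a}))$ and the target would be different. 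So the unbalanced hypothesis is consumed entirely by your first paragraph and gives you nothing further to work with.

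Second, your localisation of the spurious $\log$ to ``induction over scales'' and ``passage through a transverse direction of difficulty strictly exceeding $\a$'' does not match the paper's own analysis. Remarks~\ref{rem:whyEast} and~\ref{rmk:losing:log} pinpoint the loss precisely at step~\eqref{eq:losing:log}: the inner Poincar\'e inequality for moving an empty half-ring across a single block uses the generalised \emph{East} chain on $N\le q^{-O(1)}$ fibers, incurring $T_{\text{\tiny East}}(N,\bar\a)=q^{-O(\log N)}=\exp\big(O(q^{-\a}(\log q^{-1})^3)\big)$, whereas the generalised FA-1f chain would give $q^{-O(1)}=\exp\big(O(q^{-\a}(\log q^{-1})^2)\big)$. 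The authors explicitly note that their method ``does not appear to easily allow us to use the generalised FA-1f chain in this setting'', because the conditioning structure (helping events on fibers, dependence of the path on $\cR$ and $\cM$) does not interact cleanly with a two-sided constraint. Your proposed ``two-sided, East-type'' dynamics is vague at precisely this point, and your plan to amortise transverse costs addresses a different (and, per the paper, non-existent) source of loss. A genuine attack would need to explain concretely how to run FA-1f rather than East on the fibers inside the proof of Proposition~\ref{prop:I:plusminus}, and that is exactly the technical obstacle the authors flag as open.
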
    

We remark that an example of an update family satisfying the conditions of Conjecture~\ref{unbalanced:conj} is the so-called \emph{anisotropic model} (see, e.g.,~\cite{DC-Enter,DPEH}) whose update family consists of all subsets of size 3 of the set
$$\big\{ (-2,0), (-1,0), (1,0), (2,0), (0,1), (0,-1) \big\}.$$
Another model for which Conjecture~\ref{critical:conj} holds is the Duarte model, defined in~\eqref{def:Duarte}, for which a matching lower bound (this time, up to a \emph{constant} factor in the exponent) was recently proved in~\cite{MMT}, confirming (in a strong sense)~\cite{Robsurvey}*{Conjecture~2.5}. For all other critical models, however, the best known lower bound is that given by Theorem~\ref{thm:tripartition} and~\eqref{eq:lowbound}, and is therefore (we think) very far from the truth.


\subsection{Heuristics and roadmap}\label{sec:roadmap}

We conclude this section with a high-level description of the intuition behind the proofs
of Theorems~\ref{mainthm:1} and~\ref{mainthm:2}, together with a roadmap of the actual proof, which is carried out in Sections~\ref{sec:CPI}--\ref{sec:fullgen}.

The first key point to be stressed is that we never actually follow the dynamics of the KCM process itself; instead, we will prove the existence of a Poincar\'e constant with the correct scaling as $q\to 0$, and use the inequality~\eqref{eq:mean-infection} to deduce a bound on the mean infection time.
We emphasize that this approach only works for the stationary KCM, that is, the process starting from the stationary measure $\mu$.  The second point is that, given that the Dirichlet form of the KCM 
$$\cD(f) = \sum_{x \in \bbZ^2} \mu\big( c_x \var_x(f) \big)$$
is a sum of local variances ($\Leftrightarrow$ spin flips) computed with suitable infection nearby ($\Leftrightarrow$ the constraints $c_x$), all of our reasoning will be guided by the fact that we need to have some infection ($\Leftrightarrow$ empty sites) next to where we want to compute the variance. Therefore, much of our intuition, and all of the technical tools, have been developed with the aim of finding a way to \emph{effectively} move infection where we need it. 

A configuration sampled from $\mu$ will always have ``mesoscopic'' droplets (large patches of infected sites), though these will typically be very far from the origin. The general theory of $\cU$-bootstrap percolation developed in~\cite{BDMS,BSU} allows us to quantify very precisely the critical size of those droplets that (typically) allows infection to grow from them and invade the system. However -- and this is a fundamental difference between bootstrap percolation and KCM -- it is extremely unlikely for the stationary KCM to create around a given vertex and at a given time a very large cluster of infection. Thus, it is essential to envisage an \emph{infection/healing} mechanism that is able to \emph{move} infection over long distances without creating too large an excess\footnote{In physical terms an excess of infection is equivalent to an ``energy barrier''.} of it.


At the root of our approach lies the notion of a \emph{critical
  droplet}. A critical droplet is a certain finite set $D$ whose
geometry depends on the update family $\cU$, and whose characteristic
size may depend on $q$. For supercritical models we can take any
sufficiently large (\emph{not} depending on $q$) rectangle oriented
along the mid-point $u$ of a semicircle $C$ free of stable
directions. For critical models the droplet $D$ is a more complicated
object called a \emph{quasi-stable half-ring} (see Definition
\ref{def:half-ring} and Figure~\ref{fig:half-ring}) oriented along the
midpoint $u$ of an open semicircle with largest difficulty either $\a$
or $\b$. The long sides of $D$ will have length either
$\Theta\big( q^{-\a} \log(1/q) \big)$ or $\Theta\big( q^{-\b} \log(1/q) \big)$ for the 
$\a$-rooted {{and}} $\b$-unrooted cases respectively,
while the short sides will always have length $\Theta(1)$. The key feature of a critical droplet for supercritical models (see
Section~\ref{sec:supercritical:bootstrap}) is that, if it is empty, {{then}} it is
able to infect a suitable translate of itself in the
$u$-direction. For unrooted supercritical models
the semicircle $C$ can be chosen in such a way that both $C$
\emph{and} $-C$ are free of stable directions. As a consequence, the
empty critical droplet will be able to infect a suitable translate of itself in \emph{both}
directions $\pm u$. 

For critical models the situation changes drastically. An empty critical droplet will not be able to infect freely another critical droplet next to it in the $u$-direction because of the stable directions which are present in every open semicircle. However, it will be able to do so (in the $u$-direction if the model is $\a$-rooted, and in the $\pm u$-directions if $\b$-unrooted) provided that it receives some help from a finite number of extra empty sites (in ``clusters" of size $\a$ or $\b$) nearby. If the size of the critical droplet {{is}} chosen as above, then it is straightforward to show that such extra helping empty sites will be present with high probability (see Section~\ref{sec:critical:rooted}).

Having clarified what a critical droplet is, and under which
circumstances it is able to infect nearby sites, we next explain what
we mean by ``moving a critical droplet''. For simplicity we explain
the heuristics only for
the supercritical case. Imagine that we have a sequence
$D_0,D_1,\dots,D_n$ of contiguous, non-overlapping and identical critical droplets
such that $D_{i+1}=D_i+ d_i u$ for some suitable $d_i>0$. Suppose first that the model is unrooted and that $D_0$ is completely infected, and let us write $\o_i$ for the configuration of spins 
in $D_i$. Using the infection in $D_0$ it possible to first infect
$D_1$, then $D_2$ and
then, using reversibility, restore (\ie heal) the original configuration $\o_1$
in $D_1$. Using the infection in $D_2$ we can next infect $D_3$ and
then, using the infection in $D_3$, restore $\o_2$ in $D_2$ (see the
schematic diagram below, where $\emptyset$ stands for an infected droplet)
\begin{align*}
\emptyset\ \o_1\ \o_2\ \o_3\dots &\mapsto \emptyset\ \emptyset\ \o_2\
\o_3\dots  \mapsto\emptyset\ \emptyset\ \emptyset\
\o_3\dots  \\
& \mapsto \emptyset\ \o_1\ \emptyset\ \o_3\dots \mapsto \emptyset\ \o_1\ \emptyset\ \emptyset\dots \mapsto \emptyset\ \o_1\ \o_2\ \emptyset\dots 
\end{align*}
If we
continue in this way, we end up moving the
original infection in $D_0$ to the last droplet $D_n$ without having
ever created more than two extra infected critical droplets
simultaneously. We remark that the sequence described above is reminiscent of how infection moves in the one-dimensional $1$-neighbour KCM. 

For rooted supercritical models, on the other hand, we cannot simply restore the configuration $\o_2$ in $D_2$ using only the infection in $D_3$ (in the unrooted case this was possible because infection could propagate in both the $u$ and $-u$ directions). As a consequence, we need to follow a more complicated pattern:
\begin{align*}
\emptyset\ \o_1\ \o_2\ \o_3\dots &\mapsto \emptyset\ \emptyset\ \o_2\
\o_3\dots  \mapsto\emptyset\ \emptyset\ \emptyset\
\o_3\dots  \\
&\mapsto \emptyset\ \emptyset\ \emptyset\ \emptyset\dots \mapsto \emptyset\ \ \emptyset\ \o_2\ \emptyset\dots \mapsto \emptyset\ \o_1\ \o_2\ \emptyset\dots, 
\end{align*}
in which healing is always induced by infection present in the adjacent droplet in the $-u$
direction. This latter case is reminiscent of the one-dimensional East
model. In this case, a combinatorial result proved in~\cite{CDG} implies that in
order to move the infection to $D_n$ it is necessary to create
$\asymp \log n$ \emph{simultaneous} extra infected critical
droplets. This logarithmic energy barrier is the reason for the
different scaling of $\bbE_\mu(\t_0)$ in rooted and unrooted
supercritical models (see Theorem~\ref{mainthm:1}). 

Let us now give a somewhat more detailed outline of our approach. We begin by partitioning $\bbZ^2$ into `suitable' rectangular blocks $\{V_i\}_{i\in \bbZ^2}$ with shortest side orthogonal to the direction $u$ (see Section \ref{sec:setting}). For supercritical models these blocks have sides of constant length, while for critical
models they will have length $q^{-\kappa}$ for some constant $\kappa \gg \a$, and height equal to that of a critical droplet, so either $\Theta\big( q^{-\a} \log(1/q) \big)$ or $\Theta\big( q^{-\b} \log(1/q) \big)$, depending on the nature of the model. Then, given a configuration $\o \in \O$, we declare a block to be \emph{good} or \emph{super-good} according to the following rules:
\begin{itemize}
\item For supercritical models \emph{any} block is good, while for critical models good blocks are those which contain ``enough" empty sites to allow an adjacent empty critical droplet to advance in the $u$ (or $\pm u$) direction(s) (see Definition~\ref{def:goodsets}).\smallskip
\item In both cases, a block is said to be super-good if it is good and also contains an empty (i.e., completely infected) critical droplet.
\end{itemize}
Good blocks turn out to be very likely w.r.t.~$\mu$ (a triviality in the supercritical case), and it follows by standard percolation arguments that they form a rather dense infinite cluster. Super-good blocks, on the other hand, are quite rare, with density $\rho= q^\Theta(1)$ in the supercritical case, $\rho = \exp\big( - \Theta\big( q^{-\a} \log(1/q)^2 \big) \big)$ in the critical $\a$-rooted case, and $\rho = \exp\big( - \Theta\big( q^{-\b} \log(1/q)^2 \big) \big)$ for critical $\b$-unrooted models.

We will then prove the existence of a suitable Poincar\'e constant in three steps, each step being associated to a natural kinetically constrained \emph{block dynamics}\footnote{See, e.g.,~Chapter~15.5 of~\cite{Levin-2008} for a introduction to the technique of block dynamics in reversible Markov chains.} on a certain length
scale. In each block dynamics the configuration in each block is resampled with rate one (and independently of other resamplings) if a certain constraint is satisfied.

Our first block dynamics forces one of the blocks neighbouring $V_i$ to be at the beginning of an oriented ``thick'' path $\g$ of good blocks, with length $\approx 1/\rho$, whose last block is super-good. Using the fact that this constraint is very likely, it is possible to prove (see Section~2 in~\cite{MT}) that the relaxation time of this process is $O(1)$, and moreover (see Proposition~\ref{lem:MT:prop34}) that the Poincar\'e inequality
\begin{equation}\label{eq:8}
\var(f) \leq 4 \sum_i \mu\big( \id_{\G_i} \var_i(f) \big)
\end{equation}
holds, where $\id_{\G_i}$ is the indicator of the event that a good path exists for $V_i$. 
Though this starting point is similar to the method we develop in \cite{MT},
for the next two steps of the proof we introduce here a completely different set of tools and ideas in order to avoid the direct use of {\emph canonical paths} 
(which could instead be used in \cite{MT} for the special case of the FA-2f model). Indeed for a general model (and especially for rooted models), using canonical paths and evaluating their congestion constants would result in a very heavy and complicated machinery. 
The next idea is to convert the \emph{long-range} constrained
Poincar\'e inequality \eqref{eq:8} into a \emph{short-range} one of the form 
\begin{equation}\label{eq:10}
  \var(f) \leq C_1(q)\sum_i \mu\big( \id_{SG_i} \var_i(f) \big),
\end{equation}
in which $\id_{SG_i}$  is the indicator of the event that a suitable
collection of blocks \emph{near} $V_i$ are good and one of them is
super-good. Which collections of blocks are ``suitable", and which one
should be super-good, depends on whether the model is rooted or
unrooted; we refer the reader to Theorem~\ref{thm:CPI} for the
details. The main content of Theorem~\ref{thm:CPI}, which we present in a slightly more general setting for later convenience, is that $C_1(q)$ can be taken equal to the best Poincar\'e constant (\ie the relaxation time) of a one-dimensional generalised $1$-neighbour or East process at the effective density $\rho$. Section~\ref{sec:CPI} is entirely dedicated to the task of formalising and proving the above claim. 

The final step of the proof is to convert the Poincar\'e inequality~\eqref{eq:10} into the true Poincar\'e inequality for our KCM
\[
\var(f) \leq C_2(q)\sum_x \mu(c_x \var_x(f)),
\]
with a Poincar\'e constant $C_2(q)$ which scales with $q$ as required
by Theorems~\ref{mainthm:1} and~\ref{mainthm:2}. In turn, {{this}} requires
{{us to prove}} that a full resampling of a block in the presence of nearby
super-good and good blocks can be simulated (or reproduced) by a sequence of legal single-site updates of the \emph{original} KCM, with a
global cost in the Poincar\'e constant compatible with
Theorems~\ref{mainthm:1} and~\ref{mainthm:2}. It is here that the
results of~\cite{BDMS,BSU} on the behaviour of the
    $\cU$-bootstrap process come into play. While for supercritical
models the task described above is relatively simple
(see Section~\ref{sec:supercritical}), for critical models the problem
is significantly more {{complicated}} and a suitable generalised
East process {{again plays}} a key role. A full sketch of the proof can be found in Section~\ref{sec:core}, see in particular the proof of Proposition~\ref{prop:I1:I2}, and Remark~\ref{rem:whyEast}.

\section{Constrained Poincar\'e inequalities}\label{sec:CPI}

The aim of this section is to prove a constrained Poincar\'e inequality for a product measure on $S^{\bbZ^2}$, where $S$ is a finite set. This general inequality will play an instrumental role in the proof of our main theorems, giving us precise control of the infection time for both supercritical and critical KCM. 

In order to state our general constrained Poincar\'e inequality, we will need some notation. Let $(S,\hat\mu)$ be a finite positive probability space, and set $\O = \big( S^{\bbZ^2},\mu \big)$, where $\mu = \otimes_{i\in \bbZ^2}\hat\mu$. A generic element $\O$ will be denoted by $\o=\{\o_i\}_{i\in \bbZ^2}$. For any local function $f$ we will write $\var(f)$ for its variance w.r.t.~$\mu$ and $\var_i(f)$ for the variance w.r.t.~to the variable $\o_i\in S$ conditioned on all the other variables $\{\o_j\}_{j\neq i}$. For any $i\in \bbZ^2$ we set 
$$\bbL^+(i) = i + \big\{ \vec e_1, \vec e_2 - \vec e_1 \big\} \qquad \textup{and} \qquad \bbL^-(i) = i - \big\{ \vec e_1, \vec e_2 - \vec e_1 \big\}.$$
Finally, let $G_2 \subseteq G_1\subseteq S$ be two events, and set $p_1 := \hat\mu(G_1)$ and $p_2 := \hat\mu(G_2)$. The main result of this section is the following theorem.

\begin{theorem}\label{thm:CPI}
For any $t\in (0,1)$ there exist $\vec T(t),T(t)$ satisfying $\vec T(t) \leq \exp\big( O\big( \log \tfrac{1}{t} \big)^2 \big)$ and $T(t) \leq t^{-O(1)}$ as $t \to 0$, such that the following oriented and unoriented constrained Poincar\'e inequalities hold.\vskip6pt
\noindent {\bf (A)} \hskip 12pt Suppose that $G_1=S$ and $G_2 \subseteq S$. Then, for all local functions $f$:
\begin{align}\label{eq:8East}
& \var(f) \leq \vec T(p_2) \sum_{i\in \bbZ^2}\mu\left(\id_{\{\o_{i+\vec e_1} \in G_2\}}\var_i(f)\right)\\
& \var(f) \leq T(p_2) \sum_{i\in \bbZ^2}\mu\left(\id_{\{\{\o_{i+\vec e_1}\in G_2\}\cup\{\o_{i-\vec e_1} \in G_2\}\}}\var_i(f)\right).\label{eq:8FA}
\end{align}
\vskip 6pt
\noindent {\bf (B)} \hskip 12pt Suppose that $G_2 \subseteq G_1 \subseteq S$. Then there exists $\d > 0$ such that, for all $p_1,p_2$ satisfying $\max\big\{ p_2, (1-p_1) (\log p_2)^2 \big\} \leq \d$,
and all local functions $f$:
\begin{align}\label{eq:9East}
& \var(f) \leq \vec T(p_2) \bigg( \sum_{i\in \bbZ^2}\mu\left(\id_{\{\o_{i+\vec e_2} \in G_2\}}\id_{\{\o_{j}\in G_1\, \forall j\in \bbL^+(i)\}} \var_i(f)\right)\nonumber\\
& \hspace{3.5cm} + \sum_{i\in \bbZ^2}\mu\left(\id_{\{\o_{i+\vec e_1}\in G_2\}}\id_{\{\o_{i-\vec e_1}\in G_1\}} \var_i\big( f \tc G_1 \big) \right)\bigg),\\
& \var(f) \leq T(p_2) \bigg( \sum_{\varepsilon=\pm 1}\sum_{i\in \bbZ^2}\mu\left(\id_{\{\o_{i+\varepsilon\vec e_2} \in G_2\}}\id_{\{\o_{j}\in G_1\, \forall j\in \bbL^{\varepsilon}(i)\}} \var_i(f)\right) \nonumber\\
& \hspace{3.5cm} + \sum_{\varepsilon=\pm 1}\sum_{i\in \bbZ^2}\mu\left(\id_{\{\o_{i+\varepsilon\vec e_1} \in G_2\}}\id_{\{\o_{i-\varepsilon \vec e_1}\in G_1\}} \var_i\big( f \tc G_1 \big) \right) \bigg). \label{eq:9FA}
\end{align}
\end{theorem}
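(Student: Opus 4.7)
The plan is to reduce each of the four inequalities to classical 1D constrained Poincar\'e inequalities for the generalized East and FA-1f chains, by combining variance tensorization (along rows) with, for Part (B), a two-scale block-dynamics argument.

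For Part (A), where $G_1=S$, the right-hand sides only involve horizontal constraints, so the statement is essentially a 2D package of 1D statements along rows. The first step is the standard tensorization of variance for product measures along the row decomposition,
\[
\var(f) \leq \sum_{y\in\Z} \mu\big(\var_{R_y}(f)\big),
\]
where $R_y:=\{(x,y):x\in\Z\}$ and $\var_{R_y}$ is the variance w.r.t.\ the product measure on $R_y$, conditional on the configuration off $R_y$. The second step is to apply, for each fixed environment outside $R_y$, the known 1D constrained Poincar\'e inequalities: the oriented East Poincar\'e constant with facilitating density $p_2$ satisfies $\vec T_1(p_2)\le \exp(O(\log(1/p_2))^2)$ (the Aldous--Diaconis bound sharpened in~\cite{CFM}), while the FA-1f analogue is $T_1(p_2) \le p_2^{-O(1)}$ (\cite{CMRT}). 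Summing over $y$ gives \eqref{eq:8East} and \eqref{eq:8FA}.

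For Part (B) the inequalities mix horizontal and vertical moves, and the variance on each site is restricted to $G_1$ in the in-row term. The strategy is a two-scale argument. \emph{Outer scale}: treat horizontal rows as super-spins and introduce a vertical East (for \eqref{eq:9East}) or FA (for \eqref{eq:9FA}) block dynamics in which row $R_y$ is resampled when the row above (resp.\ above or below) hosts, at some horizontal position $i$, a legal vertical transport event, i.e.\ the event $\{\omega_{i+\vec e_2}\in G_2\}\cap\{\omega_j\in G_1\;\forall j\in\bbL^+(i)\}$. Since this event has density $\Theta(p_2)$ per vertical slice, a 1D East/FA Poincar\'e in the vertical direction reduces the full variance to a sum of row-conditional variances weighted by the vertical transport indicator, contributing the first sum on the RHS. \emph{Inner scale}: each row-conditional variance is further bounded, by a second 1D East/FA Poincar\'e on the row, into a sum of single-site variances under the horizontal constraint $\{\omega_{i\pm\vec e_1}\in G_2\}$. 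The key subtlety here is that in the inner chain the only legal spin flips at a site $i$ are those that preserve $\omega_i\in G_1$, which is what produces the conditional variance $\var_i(f\tc G_1)$ in the second sum.

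The main obstacle will be the clean management of the $G_1$-conditioning across the two scales and the resulting bookkeeping of Dirichlet forms. A typical East cascade on a row has length $O(\log(1/p_2))$ (since the 1D East relaxation time is $\exp(\Theta(\log 1/p_2)^2)$), so iterating the two-scale construction involves $O((\log(1/p_2))^2)$ sites that must lie in $G_1$ along the simulated trajectory. By a union bound, the probability that this entire chain of $G_1$-events fails is at most $O((1-p_1)(\log p_2)^2)$, which under the hypothesis $(1-p_1)(\log p_2)^2\le\d$ stays bounded away from~$1$. This is precisely what is needed to absorb the extra cost of the $G_1$-restriction into the claimed constants $\vec T(p_2)$ and $T(p_2)$ without worsening their scaling in $p_2$. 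Making this heuristic rigorous --- in particular, rewriting each outer resampling as a legal sequence of inner moves whose congestion is uniformly controlled, and checking that the conditional variances $\var_i(f\tc G_1)$ can be summed against the original $\mu$-Dirichlet form with only a bounded multiplicative loss --- is the bulk of the technical work in this section.
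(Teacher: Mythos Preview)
Your treatment of Part~(A) is correct and coincides with the paper's: tensorize the variance along horizontal rows and apply the one-dimensional East/FA-1f Poincar\'e inequality on each row.

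For Part~(B), however, your two-scale row decomposition does not produce the right structure, and this is a genuine gap rather than a bookkeeping issue. If the outer step is a vertical East/FA block dynamics on \emph{whole rows}, the facilitating event for row $R_y$ must be a row-level event; its output is a sum of row variances $\var_{R_y}(f)$ under a row constraint. A subsequent horizontal East/FA step on each row then yields \emph{only} terms of the second type (horizontal $G_2$-constraint, conditional variance $\var_i(f\mid G_1)$). Nowhere in this scheme do you generate the first sum, which has an \emph{unconditional} $\var_i(f)$ at a single site~$i$ with the \emph{vertical} constraint $\{\omega_{i+\vec e_2}\in G_2\}$ together with $G_1$ on $\bbL^+(i)$. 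Conversely, trying to read the vertical transport event as a site-level constraint inside the outer step destroys the block structure needed for a 1D vertical Poincar\'e. Either way, the two sums on the right-hand side of~\eqref{eq:9East}--\eqref{eq:9FA} are of genuinely different nature, and a row-then-column cascade does not separate them. A second problem is that composing two East steps, each costing $\vec T(p_2)$, would give $\vec T(p_2)^2$, not the single factor stated in the theorem; absorbing this would require an argument you have not sketched. Finally, your heuristic for the role of the hypothesis $(1-p_1)(\log p_2)^2\le\delta$ is off: it is not the length of an East cascade but the length of a \emph{percolation path} (of order $1/p_2^2$) along which $G_1$ must hold, and the condition enters via a Peierls estimate, not a union bound over $O((\log p_2)^2)$ sites.

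The paper's route is different in kind. It first invokes a long-range block inequality (Proposition~\ref{lem:MT:prop34}, from~\cite{MT}) of the form $\var(f)\le 4\sum_i\mu(\id_{\Gamma_i}\var_i(f))$, where $\Gamma_i$ is the event that an \emph{oriented good path} of length $L=\lfloor 1/p_2^2\rfloor$ starts at $i+\vec e_2$ and its minimal realisation contains a super-good vertex. Then, for each~$i$, one conditions on the good/super-good skeleton, identifies the random minimal good path $\gamma^*$ and the last super-good vertex $\xi$ on it, and applies the generalised East/FA-1f Poincar\'e inequality \emph{along the random path} $\gamma^*$ from $i$ to $\xi$. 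Because $\gamma^*$ moves in both $\vec e_1$ and $\vec e_2$, this single application naturally produces both sums: the starting vertex $i$ yields the unconditional $\var_i(f)$ with a $\vec e_2$-constraint, while interior path vertices (already conditioned to lie in $G_1$) yield the $\var_i(f\mid G_1)$ terms with $\vec e_1$- or $\vec e_2$-constraints, together with the $G_1$-requirements on $\bbL^\pm(i)$ coming from the ``thick'' good-path condition. The hypothesis on $p_1,p_2$ is exactly what is needed for Proposition~\ref{lem:MT:prop34} to apply.
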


\begin{remark}
When proving Theorem \ref{mainthm:1} the starting
point will be~\eqref{eq:8East} or~\eqref{eq:8FA}, depending on whether the model is
rooted or unrooted. Similarly, for critical models we will start the
proof of Theorem~\ref{mainthm:2} from~\eqref{eq:9East} or~\eqref{eq:9FA}
depending on whether the model is $\a$-rooted or $\b$-unrooted. This
choice is dictated by the $\cU$-bootstrap process according to the following
rule: we will require $V_i \subset [A]_\cU$ to hold for \emph{any} set $A$ of empty sites such that the indicator function in front of $\var_i(f)$ is equal to one. We refer the reader to Sections~\ref{sec:supercritical} and~\ref{sec:critic-ub}, and in particular to the proof of Lemma~\ref{lem:var0}, for more details.
\end{remark}

An important role in the proof of the theorem is played by the one-dimensional East and $1$-neighbour processes (see, e.g.,~\cite{CMRT}), and a certain generalization of these processes. For the reader's convenience, we begin by recalling these generalized models.
  
\subsection{The generalised East and $1$-neighbour models}\label{sec:East-FA} 

The standard versions of these two models are ergodic interacting particle systems on $\{0,1\}^n$ with kinetic constraints, which will mean that jumps in the dynamics are facilitated by certain configurations of vertices in state $0$. They are both reversible w.r.t.~the product measure $\pi= \Ber(\a_1) \otimes \cdots \otimes \Ber(\a_n)$, where $\Ber(\a)$ is the $\a$-Bernoulli measure and $\a_1,\ldots,\a_n \in (0,1)$. 

In the first process, known as the \emph{non-homogeneous East model} (see~\cites{JACKLE,East-review} and references therein), the state $\o_x$ of each point $x \in [n]$ is resampled at rate one  (independently across $[n]$) from the distribution $\Ber(\a_x)$, provided that $c_x(\o) = 1$, where 
$$c_x(\o) = \id_{\{\o_{x+1}=0\}} \qquad \textup{and} \qquad \o_{n+1} := 0.$$ 
In the second model, known as the \emph{non-homogeneous $1$-neighbour model} (and also as the FA-1f model~\cites{FH}), the resampling occurs in the same way, except in this case 
$$c_x(\o)= \max\big\{ \id_{\{\o_{x-1} = 0\}}, \, \id_{\{\o_{x+1} = 0\}} \big\} \qquad \textup{where} \qquad \o_{0} := 1 \qquad \textup{and} \qquad \o_{n+1} := 0.$$ 
It is known \cites{Aldous,CMRT,CFM} that the corresponding relaxation
times $T_{\text{\tiny East}}(n,\bar \a)$ and  $T_{\text{\tiny
    FA}}(n,\bar\a)$ (where $\bar\a = (\a_1,\ldots,\a_n)$) are finite
\emph{uniformly} in $n$ and that they satisfy the following scaling as
$q := \min\big\{ 1 - \a_x : x \in [n] \big\}$ tends to zero:
\begin{align}\label{eq:scaling}
T_{\text{\tiny East}}\big( n,\bar \a \big)= q^{-O(\min\{ \log n, \,
                                            \log(1/q) \})} \qquad
                                            \text{and}\qquad T_{\text{\tiny FA}}\big( n,\bar \a
\big) = q^{-O(1)}.
\end{align}
The proof of \eqref{eq:scaling} is deferred to the Appendix. In the proof of Theorem~\ref{thm:CPI} we will need to work in the following more general setting. 

Consider a finite product probability space of the form $\O=\otimes_{x\in
  [n]}(S_x,\nu_x)$, where $S_x$ is either a finite set or an interval
of $\bbR$, and $\nu_x$ is a positive
probability measure on $S_x$. Given $\{\o_x\}_{x \in [n]}\in \O$, we
will refer to $\o_x$ as the \emph{the state of the vertex
  $x$}. Moreover, for each $x\in [n]$, let us fix a constraining event
$S^g_x \subseteq S_x$ with $q_x := \nu_x(S^g_x) > 0$. We consider the following generalisations of the East and FA-1f processes on the space $\O$.

\begin{definition}
\label{def:gen:East} In the \emph{generalised East chain}, the state $\o_x$ of each vertex $x \in [n]$ is resampled at rate one (independently across $[n]$) from the distribution $\nu_x$, provided that $\vec c_x(\o) = 1$, where 
$$\vec c_x(\o) = \id_{\{\o_{x+1} \in S^g_{x+1}\}}$$
if $x \in \{1,\ldots,n-1\}$, and ${{\vec c_n}}(\o) \equiv 1$. 

In the \emph{generalised FA-1f chain}, the resampling occurs in the same way, except in this case $c_1(\o)= \id_{\{ \o_2 \,\in\, S^g_2 \}}$,  
$$c_x(\o)= \max\big\{ \id_{\{\o_{x-1} \, \in \, S^g_{x-1}\}}, \, \id_{\{\o_{x+1} \, \in \, S^g_{x+1}\}} \big\}$$
if $x \in \{2,\ldots,n-1\}$, and $c_n(\o) \equiv 1$. 

In both cases, set $q := \min_x q_x = \min_x \nu_x(S^g_x)$, and set $\a_x := 1 - q_x$ for each $x \in [n]$. 
\end{definition}

Note that the projection variables $\eta_x = \id_{\{S^g_x\}}$ evolve as a standard East or FA-1f chain, and it is therefore natural to ask whether the relaxation times of these generalised constrained chains can be bounded from above in terms of the relaxation times $T_{\text{\tiny East}}(n,\bar \a)$ and $T_{\text{\tiny FA}}(n,\bar \a)$ respectively. The answer is affirmative, and it is the content of the following proposition (cf.~\cite{CFM2}*{Proposition 3.4}), which provides us with Poincar\'e inequalities for the generalised East and FA-1f chains. 


\begin{proposition}\label{lem:gen-Poincare}
Let $f:\O\mapsto \bbR$. For the generalised East chain, we have
\begin{equation}\label{eq:Poincare:genEast}
\var(f) \leq \frac{1}{q} \cdot T_{\text{\tiny East}}(n,\bar \a) \cdot \sum_{x = 1}^n \nu\big( \vec c_x\var_{x}(f) \big),
\end{equation}
and for the generalised FA-1f chain, we have
\begin{equation}\label{eq:Poincare:genFA1f}
\var(f) \leq \frac{1}{q} \cdot T_{\text{\tiny FA}}(n,\bar \a) \cdot \sum_{x = 1}^n \nu\big( c_x \var_{x}(f) \big),
\end{equation}
where $\var_x(\cdot)$ denotes the conditional variance w.r.t.~$\nu_x$,
given all the other variables. 
\end{proposition}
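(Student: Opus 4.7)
The plan is to reduce the Poincar\'e inequalities~\eqref{eq:Poincare:genEast} and~\eqref{eq:Poincare:genFA1f} for the generalised chains to the corresponding standard non-homogeneous East and FA-1f Poincar\'e inequalities via the binary projection $\eta_x := \id_{\{\omega_x \in S^g_x\}}$. The key observation is that, under $\nu$, the joint law of $\eta = (\eta_1,\ldots,\eta_n)$ is the product Bernoulli measure $\pi = \otimes_x \mathrm{Ber}(q_x)$, and the image of the generalised East (resp.\ FA-1f) dynamics under $\omega \mapsto \eta$ is exactly the standard non-homogeneous East (resp.\ FA-1f) chain with parameters $\bar\alpha_x = 1-q_x$. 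Hence the known estimates for $T_{\text{\tiny East}}(n,\bar\alpha)$ and $T_{\text{\tiny FA}}(n,\bar\alpha)$ can be brought to bear.

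The core identity is the law of total variance
\[
\var_\nu(f) = \var_\pi(\hat f) + \bbE_\pi\bigl[\var_\nu(f\mid\eta)\bigr],\qquad \hat f(\eta) := \bbE_\nu[f\mid\eta]\,.
\]
For the outer term $\var_\pi(\hat f)$ I apply the standard non-homogeneous East (resp.\ FA-1f) Poincar\'e inequality to $\hat f$. The resulting Dirichlet form $\pi(\vec c_x \var_{\eta_x}(\hat f))$ is lifted back to the $\omega$-level via the $L^2$-contraction of conditional expectation for independent coordinates: because $\omega_x$ and $\omega_{-x}$ are independent under $\nu$, a straightforward Jensen computation gives $\var_{\eta_x}(\hat f) \leq \bbE_{\omega_{-x}\mid\eta_{-x}}[\var_{\omega_x}(f)]$, and combined with the fact that $\vec c_x$ is a function of $\omega_{x+1}$ alone (respectively of $\omega_{x\pm 1}$ for FA-1f) this yields $\pi(\vec c_x \var_{\eta_x}(\hat f)) \leq \nu(\vec c_x \var_{\omega_x}(f))$ and hence $\var_\pi(\hat f) \leq T_{\text{\tiny East}}(n,\bar\alpha)\sum_x \nu(\vec c_x\var_{\omega_x}(f))$.

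For the inner term $\bbE_\pi[\var_\nu(f\mid\eta)]$ I use that conditional on $\eta$ the measure $\nu$ factorises as $\otimes_x \nu_x^{\eta_x}$ (with $\nu_x^e$ the conditional law of $\omega_x$ given $\eta_x=e$), so standard product-measure tensorisation gives $\var_\nu(f\mid\eta) \leq \sum_x \bbE_{\nu\mid\eta}[\var_{\nu_x^{\eta_x}}(f)]$. The basic variance decomposition $\var_{\omega_x}(f) = q_x\var_{\nu_x^1}(f) + (1-q_x)\var_{\nu_x^0}(f) + q_x(1-q_x)(\bbE_{\nu_x^1}f - \bbE_{\nu_x^0}f)^2$, integrated over $\eta_x$, then gives the clean bound $\bbE_\pi[\var_\nu(f\mid\eta)] \leq \sum_x \nu(\var_{\omega_x}(f))$.

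The main obstacle---and the source of the factor $1/q$ in the final Poincar\'e constant---is to incorporate the constraint $\vec c_x$ into the unconstrained Dirichlet form appearing in the inner bound. The plan is to exploit the binary nature of $\eta_{x+1}$ through the two-valued variance identity $\var_{\omega_{x+1}}(f) \geq q_{x+1}(1-q_{x+1})(\phi_1 - \phi_0)^2$, where $\phi_e := \bbE_{\omega_{x+1}}[f\mid\eta_{x+1}=e]$, together with the elementary $\var(\phi_0) \leq 2\var(\phi_1) + 2\bbE[(\phi_0-\phi_1)^2]$ and the conditional Jensen estimate $\nu(\var_{\omega_x}(\phi_1)) \leq q_{x+1}^{-1}\nu(\vec c_x\var_{\omega_x}(f))$. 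These combine to give, for each $x < n$, a boundary-type comparison
\[
\nu\bigl(\var_{\omega_x}(f)\bigr) \;\leq\; \frac{C}{q_{x+1}}\Bigl[\nu\bigl(\vec c_x\var_{\omega_x}(f)\bigr) + \nu\bigl(\vec c_{x+1}\var_{\omega_{x+1}}(f)\bigr)\Bigr],
\]
while the $x = n$ term is trivial since $\vec c_n \equiv 1$. Summing in $x$ and using $q_{x+1} \geq q$ then closes the argument for the generalised East chain with Poincar\'e constant $q^{-1}T_{\text{\tiny East}}(n,\bar\alpha)$. The hardest step is precisely this boundary-type comparison, where the binary structure of the $\eta$-projection enters in an essential way; the FA-1f case proceeds analogously with the symmetrised form of the constraint and the FA-1f relaxation time.
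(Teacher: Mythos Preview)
Your approach via the law of total variance for the projection $\eta_x = \id_{\{\omega_x \in S^g_x\}}$ is genuinely different from the paper's, which instead establishes exponential $L^\infty$-decay of the semigroup $P_t$ by splitting on the event that every site has received a legal update by time $t$, and then invokes a hitting-time bound for the projected $\eta$-chain (this is where the factor $q^{-1}$ enters). Your treatment of the outer term $\var_\pi(\hat f)$ is correct.

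The gap is in the inner term. The claimed boundary-type comparison
\[
\nu\bigl(\var_{\omega_x}(f)\bigr) \;\le\; \frac{C}{q_{x+1}}\Bigl[\nu\bigl(\vec c_x\var_{\omega_x}(f)\bigr) + \nu\bigl(\vec c_{x+1}\var_{\omega_{x+1}}(f)\bigr)\Bigr]
\]
is false for any absolute constant $C$. Take $n = 3$, each $(S_y,\nu_y)$ finite with $\nu_y(S^g_y) = q$, and set $f(\omega) = g(\omega_1)\,\id_{\{\omega_2 \notin S^g_2\}}$ with $\bbE_{\nu_1}[g] = 0$ and $\var_{\nu_1}(g) = 1$. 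Then $\var_{\omega_1}(f) = \id_{\{\omega_2 \notin S^g_2\}}$, so the left-hand side equals $1-q$, while $\vec c_1\var_{\omega_1}(f) \equiv 0$. Since $\var_{\omega_2}(f) = g(\omega_1)^2\,q(1-q)$ and $\vec c_2 = \id_{\{\omega_3 \in S^g_3\}}$, one gets $\nu(\vec c_2\var_{\omega_2}(f)) = q \cdot q(1-q)$, and the right-hand side is $Cq(1-q)$. This forces $C \ge 1/q$.

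The chain of inequalities you sketch actually bounds $\var_{\omega_x}(\phi_0)$, not $\bbE_{\omega_{x+1}\mid\eta_{x+1}=0}[\var_{\omega_x}(f)]$; Jensen goes the wrong way here (the variance of a conditional mean is \emph{at most} the mean of the conditional variance). Moreover, the term $\bbE_{\omega_x}[(\phi_0-\phi_1)^2]$ is controlled by the \emph{unconstrained} quantity $\nu(\var_{\omega_{x+1}}(f))$, not by $\nu(\vec c_{x+1}\var_{\omega_{x+1}}(f))$, so the constraint at $x+1$ never materialises. In the same example the inner term $\bbE_\pi[\var_\nu(f\mid\eta)]$ is of order $1$ while the full constrained Dirichlet form is of order $q^2$, so no $O(q^{-1})$ comparison suffices; the missing factor is precisely $T_{\text{\tiny East}}(n,\bar\alpha)$. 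The paper sidesteps this: once every site has been legally updated, the conditional law of $\omega(t)$ given $\eta(t)$ is already $\otimes_x \nu_x^{\eta_x(t)}$, so the inner fluctuation is killed dynamically, and the only Poincar\'e input required is the one for the standard $\eta$-chain.
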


The proof of this proposition, which is similar to that of~\cite{CFM}*{Proposition 3.4}, is deferred to {{the}} Appendix. 

\subsection{Proof of Theorem~\ref{thm:CPI}}


We begin with the proof of part (A), which is a relatively straightforward consequence of Proposition~\ref{lem:gen-Poincare} and~\eqref{eq:scaling}. The proof of part (B) is significantly more difficult, and we will require a technical
result from~\cite{MT} (see Proposition~\ref{lem:MT:prop34}, below) and a careful application of Proposition~\ref{lem:gen-Poincare} (and of convexity) after conditioning on various events.   

\subsubsection{Proof of part (A)}

Recall that in this setting $G_1 = S$ and $G_2\subset S$, where
$(S,\hat\mu)$ is an arbitrary finite positive probability space. 
Let $f$ be a local function and let $M > 0$ be sufficiently large so that $f$ does not depend on the variables at vertices $(m,n)$ with $|m| \ge M$. For each $n \in \bbZ$, let $\mu_{n}$ denote the product measure $\otimes_{m\in \bbZ}\ \hat \mu$ on $S^{\bbZ \times \{n\}}$, and note that  $\mu=\otimes_{n \in \bbZ}\ \mu_n$. By construction, $\var_{\mu_n}(f)$ coincides with the same conditional variance computed w.r.t.~$\mu_{n}^M := \otimes_{m \in \bbZ\cap [-M,M]}\ \hat \mu.$ 

We apply Proposition~\ref{lem:gen-Poincare} to the homogeneous product measure $\mu_n^{M}$ with the event $G_2$ as event $S_x^g$ for all $x \in \{-M,\ldots,M\}$. Note that $q_x = \hat\mu(G_2) = p_2$ for every $x$, and that $\var_{(M,n)}(f) = \var_{(-M,n)}(f) = 0$. It follows, using~\eqref{eq:scaling}, that
$$\var_{\mu_n}(f) \leq \, \vec T(p_2) \ds\sum_{m\in \bbZ}\mu_n\left(\id_{\{\o_{(m+1,n)} \in G_2\}} \var_{(m,n)}(f)\right),$$
where $\vec T(p_2) = \exp\Big( O\big( \log \tfrac{1}{p_2} \big)^2 \Big)$, and 
$$\var_{\mu_n}(f) \leq \, T(p_2) \ds\sum_{m \in \bbZ} \mu_n\left(\id_{\{\o_{(m+1,n)} \in G_2\} \cup \{\o_{(m-1,n)} \in G_2\}} \var_{(m,n)}(f)\right),$$
where $T(p_2) = p_2^{-O(1)}$. 
Using the standard inequality $\var_\mu(f)\le \sum_{n \in \bbZ} \mu\big( \var_{\mu_n}(f) \big)$, the Poincar\'e inequalities~\eqref{eq:8East} and~\eqref{eq:8FA} follow.

\subsubsection{Proof of part (B)}

We next turn to the significantly more challenging task of proving the constrained Poincar\'e inequalities~\eqref{eq:9East} and~\eqref{eq:9FA}. As noted above, in addition to Proposition~\ref{lem:gen-Poincare} we will require a technical result from~\cite{MT}, stated below as Proposition~\ref{lem:MT:prop34}. In order to state this result we need some additional notation. 

Recall that an \emph{oriented path of length $n$} in $\bbZ^2$ is a
sequence $\g = (i^{(1)},\dots, i^{(n)})$ of $n$ vertices of $\bbZ^2$
with the property that $i^{(k+1)} - i^{(k)} \in \{ \vec e_1, \vec e_2
\}$ for each $k \in [n-1]$. We will say that $\g$ starts at $i^{(1)}$,
ends at $i^{(n)}$, and that $i \in \g$ if $i = i^{(k)}$ for some $k
\in [n]$. Moreover, given $\o\in \O$, we will say that $\g$ is
\begin{itemize}
\item $\o$-\emph{good} if $\o_i \in G_1$ for all $i \in \bigcup_{j \in \g} \big\{ j, j + \vec e_1, j - \vec e_1 \big\}$, and 
\item $\o$-\emph{super-good} if it is good and there exists $i \in \g$ such that $\o_i\in G_2$,
\end{itemize}  
where $G_2 \subseteq G_1 \subseteq S$ are the events in the statement of Theorem~\ref{thm:CPI}. 

In what follows it will be convenient to order the oriented paths of
length $n$ starting from a given point according to the alphabetical order of the associated strings of $n$ unit vectors from the finite alphabet
$\cX=\{\vec e_1,\vec e_2\}$. Next, for each $i\in \bbZ^2$ we define
the key event $\G_i\subset \O$, as follows:
\begin{enumerate}[(i)]
\item there exists an oriented $\o$-good path $\g$, of length $L = \big\lfloor 1/p_2^2 \big\rfloor$ 
starting at $i + \vec e_2$;
\item the smallest such path (in the above order) is $\o$-super-good;
\item $\o_{i+\vec e_1} \in G_1$.
\end{enumerate}
In what follows, and if no confusion arises, we will abbreviate
$\o$-good and $\o$-super-good to good and super-good
respectively. The following upper bound on $\var(f)$ is
very similar to~\cite{MT}*{Proposition 3.4}, and we therefore defer the
proof to the Appendix. 

\begin{proposition}\label{lem:MT:prop34}
There exists $\d > 0$ such that, if $\max\big\{ p_2, (1-p_1) (\log p_2)^2 \big\} \le \d$, then 
\begin{equation}\label{eq:MT:prop34}
\var(f) \leq 4 \sum_{i \in \bbZ^2} \mu\big( \id_{\G_i} \var_i(f) \big)
\end{equation}
for every local function $f$. 
\end{proposition}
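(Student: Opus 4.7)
The plan is to follow closely the template of \cite{MT}*{Proposition~3.4}: view the right-hand side of \eqref{eq:MT:prop34} as the Dirichlet form of the reversible auxiliary block dynamics on $\Omega$ with generator $\mathcal{L}^{\mathrm{aux}}f=\sum_i\id_{\Gamma_i}(\mu_i(f)-f)$, and show that this chain has relaxation time at most~$4$. The whole argument rests on establishing that $\mu(\Gamma_i)\geq 1-\eta$ uniformly in $i$, with $\eta$ as small as we like (depending on $\delta$), and then feeding this into a standard martingale/exchange argument.

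The first step is the probability estimate. Condition (iii) of $\Gamma_i$ costs at most $1-p_1\leq\delta$. For condition (i), I would use a subcritical Peierls-type bound on oriented percolation: there are $2^L$ candidate oriented paths of length $L=\lfloor p_2^{-2}\rfloor$ rooted at $i+\vec e_2$, each using a $3L$-vertex neighbourhood that must lie in $G_1$. A van~den~Berg--Kesten/union-bound computation shows that the probability of having \emph{no} $\omega$-good path decays like $\exp(-cL)$ as soon as $(1-p_1)$ is small enough compared to $1/\log L=\Theta(1/\log p_2^{-1})$; the hypothesis $(1-p_1)(\log p_2)^2\leq\delta$ is exactly the quantitative content of this comparison. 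For condition (ii), conditional on the canonical $\omega$-good path $\gamma_\star$ existing, its $L$ vertices each lie in $G_2$ (conditionally on $G_1$) with probability at least $p_2/p_1\geq p_2$, so the probability that $\gamma_\star$ is not super-good is at most $(1-p_2)^L\leq\exp(-Lp_2)=\exp(-1/p_2)$, which is $o(1)$. Together these three bounds yield $\mu(\Gamma_i^c)\leq\eta(\delta)\to 0$.

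The second step is the variance decomposition. I would order $\bbZ^2$ in a lexicographic-type way $i_1<i_2<\cdots$ so that every site used in the description of $\Gamma_{i_k}$ (namely $i_k+\vec e_1$, the vertices of an oriented path from $i_k+\vec e_2$ proceeding in the $\vec e_1,\vec e_2$ directions, and their $\pm\vec e_1$ neighbours) lies strictly \emph{after} $i_k$ in the ordering; this is possible because the definition of $\Gamma_i$ only looks ``north-east'' of $i$. Writing $\cF_k=\sigma(\omega_{i_1},\ldots,\omega_{i_k})$, the standard martingale decomposition gives $\var(f)\leq\sum_k\mu\bigl(\var_{i_k}(f)\bigr)$. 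Since $\Gamma_{i_k}$ is $\sigma(\omega_{i_j}:j>k)$-measurable and does not depend on $\omega_{i_k}$, conditioning on $\cF_{>k}:=\sigma(\omega_{i_j}:j>k)$ freezes $\id_{\Gamma_{i_k}}$ to $\{0,1\}$. Splitting $\var_{i_k}(f)=\id_{\Gamma_{i_k}}\var_{i_k}(f)+\id_{\Gamma_{i_k}^c}\var_{i_k}(f)$, the first term is already of the desired form, while the second is reabsorbed at a cost of a multiplicative factor $1/(1-\eta)$ per vertex, which is uniformly bounded (and can be made $\leq 4$) once $\delta$ is sufficiently small. Summing over $k$ yields \eqref{eq:MT:prop34}.

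The main obstacle will be the probability estimate for (i): because $2^L$ grows exponentially in $L=p_2^{-2}$, the condition that $1-p_1$ be small is \emph{not} enough on its own; the argument requires the \emph{quantitative} balance $(1-p_1)(\log p_2)^2\leq\delta$, whose exponent $2$ reflects the BK bound. A secondary technical subtlety is arranging the canonical-path convention in (ii) so that the events ``a good path exists'' and ``the canonical good path is super-good'' can be handled independently of one another without overcounting, which is what forces the specific lexicographic choice rather than ``any'' good path in the definition of $\Gamma_i$.
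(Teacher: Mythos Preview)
Your overall two-step plan (probability estimate, then variance inequality) matches the paper's, but the second step has a genuine gap, and the first step is not quite right either.

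\textbf{The reabsorption step does not work.} You write that $\mu(\id_{\Gamma_{i_k}^c}\var_{i_k}(f))$ can be ``reabsorbed at a cost of a multiplicative factor $1/(1-\eta)$ per vertex''. This would amount to the term-by-term bound $\mu(\var_{i_k}(f))\le\frac{1}{1-\eta}\mu(\id_{\Gamma_{i_k}}\var_{i_k}(f))$, which is simply false: take $f=\id_{\Gamma_0^c}\,g(\omega_0)$ with $\var(g)>0$; then $\var_0(f)=\id_{\Gamma_0^c}\var_{\hat\mu}(g)$, so $\mu(\id_{\Gamma_0}\var_0(f))=0$ while $\mu(\var_0(f))>0$. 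The point is that $\var_{i_k}(f)$ is a function of \emph{all} $\omega_{i_j}$ with $j\neq k$, hence correlated with $\id_{\Gamma_{i_k}}$ in an uncontrolled way. The global inequality \eqref{eq:MT:prop34} may still hold for such $f$ (the variance is picked up by other sites $j$), but this cannot be seen term by term. The paper does not attempt a bare martingale argument; it invokes \cite{MT}*{Theorem~1}, whose proof uses the directed/finite-range structure of the constraints in an essential iterative way, and whose hypothesis (the analogue of \cite{MT}*{(2.1)}) requires not merely $\mu(\Gamma_i^c)\le\eta$ but $\mu(\Gamma_i^c)$ small \emph{compared to the support size of} $\Gamma_i$.

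\textbf{The probability bound for (i) is also off.} From a single starting point $i+\vec e_2$, the probability that no good oriented path of length $L$ exists does \emph{not} decay like $e^{-cL}$: it is bounded below by $1-p_1$ (the starting site itself may be bad) and in fact converges to $1-\theta>0$ as $L\to\infty$. This alone would suffice if you only needed $\mu(\Gamma_i^c)$ small, but to feed \cite{MT}*{Theorem~1} one needs $\mu(\Gamma_i^c)\ll L^{-2}$, which a single starting point cannot deliver. The paper's fix is to introduce an auxiliary segment $C_i(\ell)=\{i+\vec e_2+k\vec e_1:0\le k\le\ell\}$ with $\ell\asymp\log(1/p_2)$, split $\Gamma_i$ into $A_i^{(1)}$ (all of $C_i(\ell)\cup\{i\pm\vec e_1\}$ good) and $A_i^{(2)}$ (some good path from $C_i(\ell)$ exists and the minimal one is super-good), observe that $A_i^{(1)}\cap A_i^{(2)}\subset\Gamma_i$, and then bound $1-\mu(A_i^{(2)})$ by $e^{-m(p_1)\ell}+(1-p_2)^L$ via a Peierls argument with $\ell$ seeds. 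The hypothesis $(1-p_1)(\log p_2)^2\le\delta$ is calibrated so that $(\ell+3)(1-p_1)$ and $(L+\ell)^2\big(e^{-m(p_1)\ell}+(1-p_2)^L\big)$ are both small, which is exactly what \cite{MT}*{(2.1)} demands.
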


We would like to use Proposition~\ref{lem:gen-Poincare} to bound the right-hand side of~\eqref{eq:MT:prop34}. However, Proposition~\ref{lem:gen-Poincare} provides us with an upper bound on the variance of a function, 
whereas the quantity $\mu\big( \id_{\G_i} \var_i(f) \big)$ is more like the average of a local variance. We will therefore need to use convexity to bound from above the average of a local variance by a full variance. In order to reduce as much as possible the potential loss of such an operation, we first perform a series of conditionings on the measure $\mu$ and use convexity only on the final conditional measure.  

Roughly speaking, on the event $\G_i$ we first reveal, for each $j \ne i$ within distance $2/p_2^2$ of the origin, whether or not the event $\{\o_j \in G_1\}$ holds. Given this information, we know which paths of length $L$ and starting at $i + \vec e_2$ are good and we define $\g^*$ as the smallest one in the order defined above. Next, we reveal the \emph{last} $j^* \in \g^*$ such that $\{\o_{j^*} \in G_2\}$. Note that in doing so we do not need to observe whether or not the event $\{\o_j \in G_2 \}$ holds for any earlier $j$ (i.e., before $j^*$ in $\g^*$). Finally, defining $\g \subset \g^*$ to be the part of $\g^*$ before $j^*$, we reveal $\o_j$ for all $j \in \bbZ^2$, except for $j = i$ and $j \in \g$. 

\begin{figure}[ht]
  \centering
\begin{tikzpicture}[>=latex,scale=1]
[x=0.5cm, y=0.5cm]
\begin{scope}
\draw [help lines] (0,0) grid [step=0.5] (4,4);
\draw [line width=3pt ] (0,0.5)--(1,0.5)--(1,1)--(2,1)--(2,2.5)--(3,2.5)--(3,3.5);
\draw [line width=1pt ] (3,3.5)--(3.5,3.5)--(3.5,4);
\node at (1.7,1.5) {$\gamma$};
\node at (3.8,4) {$\gamma^*$};
\node at (3.7,3.3) {$\xi$};
\node at (0,0) {$\bullet$};
\node at (0,-0.5) {$0$};
\node at (3.5,3.5) {$\bullet$};
\end{scope}
\end{tikzpicture}
  \caption{The minimal good path $\g^*$, the position of the first
    super-good vertex $\xi$ encountered while traveling backward along
    $\g^*$, and the subpath $\g \subset \g^*$ (thick black) connecting $\vec e_2$ to a neighbour of $\xi$. }
\label{fig:1}
\end{figure} 

At the end of this process we are left with a (conditional) probability measure $\nu$ on $S^{\g \cup\{i\}}$. We will then apply convexity and Proposition~\ref{lem:gen-Poincare} to this measure. We now detail the above procedure.

\begin{proof}[Proof of part (B) of Theorem~\ref{thm:CPI}] 
Let $\d > 0$ be given by Proposition~\ref{lem:MT:prop34}, and assume that the events $G_2 \subseteq G_1 \subseteq S$ satisfy $\max\big\{ p_2, (1-p_1) (\log p_2)^2 \big\} \le \d$. By Proposition~\ref{lem:MT:prop34}, we have
\begin{equation}\label{eq:MT:prop34:repeat}
\var(f) \leq 4 \sum_{i \in \bbZ^2} \mu\big( \id_{\G_i} \var_i(f) \big)
 \end{equation}
for every local function $f$. We will bound each term of the sum in~\eqref{eq:MT:prop34:repeat}. Using translation invariance, it will suffice to consider the term $i = (0,0)$. 

For each $\o \in \G_{(0,0)}$, let $\g^* = \g^*(\omega)$ denote the smallest $\o$-good oriented path of length $L$ starting from $\vec e_2$, and note that $\g^*$ is $\o$-super-good, since $\o \in \G_{(0,0)}$. Let $\xi = \xi(\omega) \in \g^*$ be the first super-good vertex encountered while travelling along $\g^*$ backwards, \ie from its last point to its starting point $\vec e_2$. Finally, let $\g$ be the portion of $\g^*$ starting at $\vec e_2$ and ending at the vertex preceding $\xi$ in $\g^*$. 

We next perform the series of conditionings on the measure $\mu$ that were described informally above. Let $\L$ be the box of side-length $4/p_2^2$ centred at the origin. We first condition on the event $\G_{(0,0)}$ and on the $\s$-algebra generated by the events 
$$\big\{ \{\o_j \in G_1\} : j \in \L \setminus \{(0,0)\} \big\}.$$
Note that, since we are conditioning on the event $\G_{(0,0)}$, these events determine $\g^*$. Next we condition on the position of $\xi$ on $\g^*$; this determines the path $\g = (i^{(1)},\dots, i^{(n)})$. Finally we condition on all of the variables $\o_j$ with $j \not\in \g \cup \{(0,0)\}$. 
Let $\nu$ be the resulting conditional measure and observe that $(S^{\g\cup\{(0,0)\}},\nu)$ is a product probability space of the form $\otimes_{j \in \g \cup \{(0,0)\}}(S_j,\nu_j),$ with $(S_{(0,0)},\nu_{(0,0)}) = (S, \hat \mu)$ and $(S_j,\nu_j) = \big( G_1, \hat\mu(\cdot \tc G_1) \big)$ for each $j \in \g$. Notice that 
\begin{equation}\label{eq:CPI:convexity}
\mu\big( \id_{\G_{(0,0)}} \var_{(0,0)}(f) \big) = \mu\Big( \id_{\G_{(0,0)}} \, \nu\big( \var_{\nu_{(0,0)}}(f) \big) \Big) \le \, \mu\big( \id_{\G_{(0,0)}} \var_\nu(f) \big),
\end{equation}
because $\nu\big( \var_{\nu_{(0,0)}}(f) \big) \le \var_\nu(f)$, by convexity.

We can now bound $\var_\nu(f)$ from above by applying Proposition~\ref{lem:gen-Poincare} to the measure $\nu = \otimes_{j \in \g\cup \{(0,0)\}} (S_j,\nu_j),$ with the super-good event $G_2$ as the constraining event $S_j^g$. Observe that $\nu\big( S_{(0,0)}^g \big) = \hat \mu(G_2) = p_2$ and $\nu\big( S_j^g \big) =  \hat \mu\big( G_2 \, | \, G_1 \big) = p_2 / p_1$ for each $j \in \g$. The first Poincar\'e inequality~\eqref{eq:Poincare:genEast} in Proposition~\ref{lem:gen-Poincare} therefore gives 
\begin{equation}\label{eq:7}
\mu\big( \id_{\G_{(0,0)}} \var_\nu(f) \big) \leq \, \vec T(p_2) \cdot \mu\bigg( \id_{\G_{(0,0)}} \sum_{i \in \g \cup \{(0,0)\}} \nu\Big( \id_{\{\o_{m(i)}\in G_2\}} \var_{\nu_{i}}(f) \Big) \bigg), 
\end{equation}
where $m(i)$ is the next point on the path $\g^*$ after $i$ (\ie $m(i)$ is either $m(i) = i + \vec e_1$ or $m(i) = i + \vec e_2$) and 
\[
\vec T(p_2) \leq \frac{1}{p_2} \sup \big\{ T_{\text{\tiny East}}(n,\bar \a) : n \leq L \big\} \leq \, p_2^{-O( \log(1/p_2) )}, 
\]
by~\eqref{eq:scaling}. Recall that in Definition \ref{def:gen:East} the constraint for the last point is identically equal to one (this is in order to guarantee irreducibility of the chain), and observe that this condition holds in the above setting because, by construction, $\o_\xi\in G_2$.

Finally, we claim that~\eqref{eq:7} implies that
\begin{multline}\label{eq:CPI:extended:sum}
\mu\big( \id_{\G_{(0,0)}} \var_\nu(f) \big) \leq \, \vec T(p_2) \, \sum_{i \in \L} \bigg( \mu\Big( \id_{\{\o_{i + \vec e_1} \in G_2\}} \id_{\{\o_{i-\vec e_1} \in G_1\}} \var_i(f \tc G_1) \Big) \\
+ \mu\Big(\id_{\{\o_{i+\vec e_2} \in G_2\}}\id_{\{\o_{j}\in G_1\, \forall j\in \bbL^+(i)\}} \big( \var_i(f) + \var_i(f\tc G_1) \big) \Big) \bigg).
\end{multline}
Indeed, note that $\var_{\nu_{(0,0)}}(f) = \var_{(0,0)}(f)$ and that $\var_{\nu_{i}}(f) = \var_i(f \tc G_1)$ for each $i \in \g$, and recall that, by construction, $\o_{i + \vec e_1},\o_{i - \vec e_1} \in G_1$ for every $i \in \g$. Therefore, for each $i \in \g$, if $m(i) = i + \vec e_1$ then $\o_{i-\vec e_1} \in G_1$, and if $m(i) = i + \vec e_2$ then $\o_{j}\in G_1$ for each $j \in \bbL^+(i) = i + \big\{ \vec e_1, \vec e_2 - \vec e_1 \big\}$. Moreover, the event $\G_{(0,0)}$ implies that $\o_j \in G_1$ for each $j \in \bbL^+((0,0))$. Therefore every term of the right-hand side of~\eqref{eq:7} is included in the right-hand side of~\eqref{eq:CPI:extended:sum}, and hence~\eqref{eq:7} implies~\eqref{eq:CPI:extended:sum}, as claimed.

Now, combining~\eqref{eq:CPI:extended:sum} with~\eqref{eq:MT:prop34:repeat} and~\eqref{eq:CPI:convexity}, and noting that $\var_{i}(f) \geq p_1 \var_i(f \tc G_1)$ and that $|\L| \le p_2^{-O(1)}$, we obtain
\begin{multline*}
\var(f) \leq \, p_1^{-1} p_2^{-O(1)} \, \vec T(p_2) \sum_{i \in \bbZ^2} \bigg( \mu\Big( \id_{\{\o_{i + \vec e_1} \in G_2\}} \id_{\{\o_{i-\vec e_1} \in G_1\}} \var_i\big(f \tc G_1 \big) \Big) \\
+ \mu\Big(\id_{\{\o_{i+\vec e_2} \in G_2\}}\id_{\{\o_{j}\in G_1\, \forall j\in \bbL^+(i)\}} \var_i(f) \Big) \bigg),
\end{multline*}
which implies the oriented Poincar\'e inequality~\eqref{eq:9East}, as required.
 
The proof of the unoriented inequality~\eqref{eq:9FA} is almost the same, except we will use the second
Poincar\'e inequality~\eqref{eq:Poincare:genFA1f} in Proposition~\ref{lem:gen-Poincare}, instead of~\eqref{eq:Poincare:genEast}. To spell out the details, we obtain 
\begin{equation}\label{eq:CPI:FA1f:app}
\mu\big( \id_{\G_{(0,0)}} \var_\nu(f) \big) \leq \, T(p_2) \cdot \mu\bigg( \id_{\G_{(0,0)}} \sum_{i \in \g \cup \{(0,0)\}} \nu\Big( c_i \var_{\nu_{i}}(f) \Big) \bigg), 
\end{equation}
where $c_i$ is the indicator of the event that $G_2$ holds for at least one of the neighbours of $i$ on the path $\g^*$, and 
\[
T(p_2) \leq \frac{1}{p_2} \sup_{n \leqslant L} T_{\text{\tiny FA}}(n,\bar \a) = p_2^{-O(1)}, 
\]
by~\eqref{eq:scaling}. Note that the constraint for the last point is again identically equal to one since $\o_\xi\in G_2$. It follows (cf.~\eqref{eq:CPI:extended:sum}) that
\begin{multline}\label{eq:CPI:extended:sum:again}
\mu\big( \id_{\G_{(0,0)}} \var_\nu(f) \big) \leq \, T(p_2) \, \sum_{i \in \L} \sum_{\eps = \pm 1} \bigg( \mu\Big( \id_{\{\o_{i + \eps\vec e_1} \in G_2\}} \id_{\{\o_{i - \eps\vec e_1} \in G_1\}} \var_i\big( f \tc G_1 \big) \Big) \\
+ \mu\Big(\id_{\{\o_{i + \eps\vec e_2} \in G_2\}}\id_{\{\o_{j}\in G_1\, \forall j\in \bbL^\eps(i)\}} \big( \var_i(f) + \var_i\big( f \tc G_1 \big) \big) \Big) \bigg),
\end{multline}
since $\o_{i + \vec e_1},\o_{i - \vec e_1} \in G_1$ for every $i \in \g$, and the event $\G_{(0,0)}$ implies that $\o_j \in G_1$ for each $j \in \bbL^+((0,0))$. In particular, note that if $i  \in \g$ and $i + \vec e_2 \in \g$, then $\o_j \in G_1$ for each $j \in \bbL^+(i) = \bbL^-(i + \vec e_2) = i + \big\{ \vec e_1, \vec e_2 - \vec e_1 \big\}$. Therefore, as before, every term of the right-hand side of~\eqref{eq:CPI:FA1f:app} is included in the right-hand side of~\eqref{eq:CPI:extended:sum:again}.

Finally, combining~\eqref{eq:CPI:extended:sum:again} with~\eqref{eq:MT:prop34:repeat} and~\eqref{eq:CPI:convexity}, and since $\var_{i}(f) \geq p_1 \var_i(f \tc G_1)$ and $|\L| \le p_2^{-O(1)}$, we obtain
\begin{multline*}
\var(f) \leq \, p_1^{-1} p_2^{-O(1)} \, T(p_2) \sum_{i \in \bbZ^2} \sum_{\eps = \pm 1} \bigg( \mu\Big( \id_{\{\o_{i + \eps \vec e_1} \in G_2\}} \id_{\{\o_{i - \eps \vec e_1} \in G_1\}} \var_i\big( f \tc G_1 \big) \Big) \\
+ \mu\Big(\id_{\{\o_{i + \eps\vec e_2} \in G_2\}}\id_{\{\o_{j}\in G_1\, \forall j\in \bbL^\eps(i)\}} \var_i(f) \Big) \bigg),
\end{multline*}
which gives the unoriented Poincar\'e inequality~\eqref{eq:9FA}, as claimed, and hence completes the proof of Theorem~\ref{thm:CPI}.
\end{proof}

\section{Renormalization and spreading of infection}
\label{sec:strategy} 

In this section we shall define the setting to which we will apply Theorem \ref{thm:CPI} in order to bound from above the relaxation time, and hence the mean infection time, of supercritical and critical KCM. We will begin with a very brief informal description, before giving (in Section \ref{sec:setting}) the precise definition. We will then, in Sections~\ref{sec:supercritical:bootstrap} and~\ref{sec:critic:spread}, state two results from the theory of bootstrap percolation that will play an instrumental role in the proofs of Theorems~\ref{mainthm:1} and~\ref{mainthm:2}.  

Our basic strategy is to partition the lattice $\bbZ^2$ into disjoint rectangular ``blocks'' $\{V_i\}_{i\in \bbZ^2}$, whose size is  adapted to the bootstrap update family $\cU$. To each block $V_i$ we associate a block random variable $\o_i$, which is just the collection of i.i.d.~$0/1$ Bernoulli($p$) variables $\{\o_x\}_{x\in V_i}$ attached to each vertex of the block. In order to avoid confusion we will always use the letters $i,j,\dots$ for the labels of quantities associated to blocks, and the letters $x,y,\dots$ for the labels of the quantities associated to vertices of $\bbZ^2$. We will apply Theorem~\ref{thm:CPI} to the block variables $\{\o_i\}_{i\in \bbZ^2}$.       

\subsection{A concrete general setting} \label{sec:setting}

Let $v$ and $v^\perp$ be orthogonal rational directions in the first and second quadrant of $\bbR^2$ respectively. Let $\vec v$ be the vector joining the origin to the first site of $\bbZ^2$ in direction $v$, and similarly for $\vec v^\perp$. Let $n_1 \ge n_{2}$ be (sufficiently large) even integers, and set 
\begin{equation}\label{eq:basicrect}
R \, := \, \big\{ x \in \bbR^2 \,:\, x = \a n_1\vec v +\b n_2 \vec v^\perp,\, \a, \b \in [0,1) \big\}.
\end{equation}

The finite probability space $(S,\hat \mu)$ appearing in Section~\ref{sec:CPI} will always be of the form $S=\{0,1\}^V$, where $V = R \cap \bbZ^2$, and $\hat \mu$ is the Bernoulli$(p)$ product measure. Observe that the probability space $(S^{\bbZ^2},\mu)$ is isomorphic to $\O = \{0,1\}^{\bbZ^2}$ equipped with the Bernoulli$(p)$ product measure which, with a slight abuse of notation, we will continue to denote by $\mu$. For our purposes, a convenient isomorphism between the two probability spaces is given by a kind of tilted ``brick-wall'' partition of $\bbZ^2$ into disjoint copies of the basic block $V$ (see Figure \ref{fig:brickwall}). To be precise, for each $i = (i_1,i_2) \in \bbZ^2$, set $V_i := R_i \cap \bbZ^2$, where $R_i := R + (i_1 + i_2 / 2) n_1 \vec v + i_2 n_2 \vec v^\perp$. \medskip 

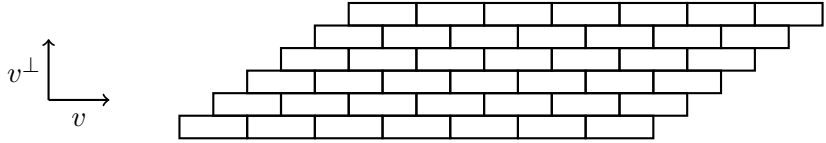
\begin{figure}[ht]
\centering
\begin{tikzpicture}[scale=0.3]
\begin{scope}[shift={(10,0)},scale=0.5]
\foreach \j in {0,1,2,3,4,5}{
\foreach \i in {0,...,6}{%
\draw [thick,opacity=1]
 (6*\i+6*\j/2,2*\j) rectangle (6*\i+6*\j/2+6,2*\j+2);}
}
\end{scope}
\begin{scope}[shift={(-3,-10)},scale=0.9]
\node at (9.5,12) {$v$};
\node at (6.8,14.5) {$v^\perp$};
\draw [->,thick] (8,13) -- (11,13); 
\draw [->,thick] (8,13)--(8,16);
\end{scope}
\end{tikzpicture}
\caption{The partition into blocks $V_i,\ i\in \bbZ^2$
} 
\label{fig:brickwall}
\end{figure}

In this partition the ``northern" and ``southern" neighbouring blocks of $V_i$ (\ie the blocks corresponding to $(i_1,i_2\pm 1)$) are shifted in the direction $\vec v$ by $\pm \, n_1 / 2$ w.r.t.~$V_i$. With this notation, and given $\o\in S^{\bbZ^2}$, it is then convenient to think of the variable $\o_i \in S$ as being the collection $\{ \o_x \}_{x \in V_i} \in \{0,1\}^{V_i}$. The local variance term $\var_i(f)$ (\ie the variance of $f$ w.r.t.~the variable $\o_i$ given all the other variables $\{\o_j\}_{j \neq  i}$), which appears in the various constrained Poincar\'e inequalities in the statement of Theorem~\ref{thm:CPI}, is then equal to the variance $\var_{V_i}(f)$ w.r.t.~the i.i.d.~Bernoulli($p$) variables $\{ \o_x \}_{x \in V_i}$, given all of the other variables $\{ \o_y \}_{y \in \bbZ^2 \setminus V_i}$. 

From now on, $\o$ will always denote an element of $\{0,1\}^{\bbZ^2}$ and, given $\L \subset \bbR^2$, we will write $\o_\L$ for the collection of i.i.d. random variables $\{ \o_x \}_{x \in \L \cap \bbZ^2}$, and $\mu_\L$ for their joint product Bernoulli($p$) law. We will say that $\L$ is \emph{empty} (or \emph{empty in $\o$}) if $\o$ is identically equal to $0$ on $\L \cap \bbZ^2$, and similarly that $\L$ is \emph{filled} (or \emph{completely occupied}) if $\o$ is identically equal to $1$ on $\L \cap \bbZ^2$.

We now turn to the definitions of the good and super-good events $G_2 \subset G_1 \subseteq S$. The good event $G_1$ will depend on the update family $\cU$, and will (roughly speaking) approximate the event that the block $V_i$ can be ``crossed" in the $\cU$-bootstrap process with the help of a constant-width strip connecting the top and bottom of $V_i$. For supercritical models this event is trivial, and therefore $G_1$ is the entire space $S$; for critical models, on the other hand, $G_1$ will require the presence of empty vertices inside $V$ obeying certain model-dependent geometric constraints (see Definition~\ref{def:goodsets}, below). The super-good event $G_2$ for supercritical models will simply require that $V$ is empty. For critical models it will require that $G_1$ holds, and additionally that there exists an empty subset $\cR$ of $V$, called a \emph{quasi-stable half-ring} (see Definitions~\ref{def:half-ring} and~\ref{def:goodsets}, and Figure~\ref{fig:half-ring}) of (large) constant width, and height equal to that of $V$. 
We emphasize that the parameters $n_1,n_2$ will be chosen (depending on the model) so that the probabilities  $p_1$ and $p_2$ of the events $G_1$ and $G_2$ (respectively) satisfy the key condition
$$\lim_{q \to 0} \max{\Big\{ p_2,\big( 1 - p_1 \big) \big( \log p_2 \big)^2 \Big\}}= 0$$
that appears in part (B) of Theorem~\ref{thm:CPI}.

\subsection{Spreading of infection: the supercritical case.}\label{sec:supercritical:bootstrap}

We are now almost ready to state the property of $\U$-bootstrap percolation (proved by Bollob\'as, Smith and Uzzell~\cite{BSU}) that we will need when $\U$ is supercritical, i.e., when there exists an open semicircle $C\subset S^1$ that is free of stable directions. If $\U$ is rooted, then we may choose $-v$ (in the construction of the rectangle $R$ and of the partition $\{V_i\}_{i\in \bbZ^2}$ described in Section~\ref{sec:setting}) to be the midpoint of any such semicircle; if $\U$ is unrooted, on the other hand, then $C$ can be chosen in such a way that $-C$ also has no stable directions, and we can choose $v$ to be the midpoint of any such semicircle. 



Recall that $[ V_i ]_\cU$ denotes the closure of $V_i = R_i \cap \bbZ^2$ under the
$\cU$-bootstrap process. The following result, proved in~\cite{BSU}, states that a large enough rectangle can infect the rectangle to its ``left" (i.e., in direction $-v$) under the $\cU$-bootstrap process, and if $\U$ is unrooted then it can also infect the rectangle to its ``right" (i.e., in direction $v$).

\begin{proposition}\label{prop:bootsc}  
Let $\U$ be a supercritical two-dimensional update family. If $n_1$ and $n_2$ are sufficiently large, then the following hold:
\begin{itemize}
\item[$(i)$] If $\U$ is unrooted, then $V_{(-1,0)} \cup V_{(1,0)} \subset [ V_{(0,0)} ]_\cU$.\smallskip
\item[$(ii)$] If $\U$ is rooted, then $V_{(-1,0)} \subset [ V_{(0,0)} ]_\cU$.
\end{itemize}
\end{proposition}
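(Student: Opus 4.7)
The plan is to reduce both the rooted and unrooted cases to the same propagation mechanism, which rests on the standard duality (proved in \cite{BSU}) between unstable directions and update rules contained in half-planes: $u \in S^1 \setminus \cS$ if and only if some $X \in \cU$ satisfies $X \subset \mathbb{H}_u$. In both cases the choice of $v$ is such that there exists an open semicircle $C_-$ centered at $-v$ and disjoint from $\cS$: namely $C_- = C$ in the rooted case, and $C_- = -C$ in the unrooted case (where by assumption $-C$ is also disjoint from $\cS$). First I would fix a closed sub-semicircle $\overline{C'_-} \subset C_-$ still containing $-v$ in its interior. Since each set $\{u \in S^1 : X \subset \mathbb{H}_u\}$ is open in $S^1$ and these sets cover $\overline{C'_-}$, the finiteness of $\cU$ combined with compactness of $\overline{C'_-}$ yields a finite subfamily $X_1, \ldots, X_m \in \cU$ such that every $u \in \overline{C'_-}$ admits some $X_j \subset \mathbb{H}_u$. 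Set $K := \max_j \operatorname{diam}(X_j)$.

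Next I would prove by induction that, for $n_1, n_2 \gg K$ chosen appropriately (depending only on $\cU$), the closure $[V_{(0,0)}]_\cU$ contains a trapezoidal region extending from $V_{(0,0)}$ in direction $-v$, whose lateral boundary slopes are determined by the extreme directions used in $\overline{C'_-}$. Since $-v$ is the midpoint of the \emph{open} semicircle $C_-$, the closed arc $\overline{C'_-}$ can be chosen to contain directions arbitrarily close to both $\pm v^\perp$, so these slopes are close to $\pm v^\perp$, meaning the trapezoid fans out widely in the $v^\perp$ direction. Choosing $n_1$ sufficiently large given $n_2$ then ensures that the trapezoid eventually contains all of $V_{(-1,0)}$. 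The inductive step is the standard one: each new site $y$ on the advancing front admits a direction $u \in \overline{C'_-}$ pointing from $y$ into the current empty region, and the corresponding $X_j \subset \mathbb{H}_u$ satisfies $X_j + y \subset [V_{(0,0)}]_\cU$, so $y$ becomes empty.

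For the unrooted case one additionally needs $V_{(1,0)} \subset [V_{(0,0)}]_\cU$. This follows from the identical argument applied to the open semicircle $C_+ = C$, which is centered at $v$ and disjoint from $\cS$ by the unrooted hypothesis; the roles of $v$ and $-v$ are simply swapped.

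The main obstacle I foresee is the geometric bookkeeping of the growing trapezoid, in particular the interplay between the lateral slopes (determined by the extreme directions used along the lateral boundary) and the aspect ratio $n_1/n_2$. In effect one must verify that the ``cone of propagation'' opened by $\overline{C'_-}$ is wide enough to swallow $V_{(-1,0)}$ before the trapezoid closes up. This is exactly the rectangular-propagation mechanism underlying Theorem~\ref{thm:tripartition}(a) for supercritical update families in \cite{BSU}, so the cleanest write-up would likely be to quote the corresponding lemma from that paper after a brief translation of notation, since our rectangles $R$ are tilted to be aligned with $v$ and $v^\perp$ rather than the coordinate axes.
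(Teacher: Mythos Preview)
Your proposal is correct and takes essentially the same approach as the paper: both reduce the statement to the rectangular-propagation machinery of \cite{BSU}, and both ultimately defer to that paper for the details (the paper's own proof is just a sketch invoking \cite{BSU}*{Lemma~5.5} via the quasi-stable-directions framework, noting that when the whole semicircle is unstable the empty set serves as a $u$-block for every quasi-stable $u$). Your write-up gives a bit more geometric intuition before arriving at the same citation, which is fine; the one cosmetic redundancy is the compactness argument, since $\cU$ is already finite.
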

\begin{remark}
\label{rem:left-right}By definition, in the rooted case the semicircle $-C$ contains some stable
directions. Thus, $V_{(1,0)} \not\subset [ V_{(0,0)} ]_\cU$.
\end{remark}
The proof of Proposition~\ref{prop:bootsc} in~\cite{BSU} is non-trivial, and required some important innovations, most notably the notion of ``quasi-stable directions" (see Definition~\ref{def:quasi-stable}, below). We will therefore give here only a brief sketch, explaining how one can read the claimed inclusions out of the results of~\cite{BSU}

\begin{proof}[Sketch proof of Proposition~\ref{prop:bootsc}]
Both parts of the proposition are essentially immediate consequences of the following claim: if $R$ is a sufficiently large rectangle with two sides parallel to $w \in S^1$, and the semicircle centred at $w$ is entirely unstable, then $[R]_\U$ contains \emph{every} element of $\bbZ^2$ that can be reached from $R$ by travelling in direction $w$. This claim follows from~\cite{BSU}*{Lemma~5.5}, since in this setting all of the quasi-stable directions in $\cS_U'$ (see~\cite{BSU}*{Section~5.3}) are unstable (since they are contained in the semicircle centred at $w$), and if $u$ is unstable then the empty set is a $u$-block (see~\cite {BSU}*{Definition~5.1}). We refer the reader to~\cite{BSU}*{Sections~5 and~7} for more details. 
\end{proof}

\subsection{Spreading of infection: the critical case.}\label{sec:critic:spread}

We next turn to the more complicated task of precisely defining the good and super-good events for  critical update families. In this subsection we will lay the groundwork for the precise definitions of these events (which we defer until Section~\ref{sec:critic-ub}, see Definition~\ref{def:goodsets}) by recalling some definitions from~\cite{BSU,BDMS}, and introducing the key new objects needed for the proof of Theorem~\ref{mainthm:2}, which we call ``quasi-stable half-rings" (see Definition~\ref{def:half-ring} and Figure~\ref{fig:half-ring}, below). Throughout this subsection, we will assume that $\cU$ is a critical update family with difficulty $\a \in [1,\infty)$ and bilateral difficulty $\b \in [\a,\infty]$ (see Definition~\ref{def:alpha}). Recall that we say that $\U$ is $\a$-rooted if $\beta \ge 2\a$, and that $\U$ is $\b$-unrooted otherwise.

We begin by noting an important property of the set of stable directions $\cS(\cU)$.

\begin{lemma}\label{lem:stableset}
If $\beta < \infty$ then $\S(\U)$ consists of a finite number of isolated, rational directions. Moreover, if $\U$ is $\b$-unrooted and $\alpha(u^*) = \max\big\{ \a(u) : u \in \cS(\cU) \big\}$, then $\alpha(u) \le \b$ for every $u \in \S(\U) \setminus \{u^*,-u^*\}$.  
\end{lemma}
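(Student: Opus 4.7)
The backbone of the argument is the standard structure theorem for $\cS(\cU)$ (see \cite{BSU}*{Lemma~5.2} and \cite{BDMS}*{Lemma~2.7}): $\cS(\cU)$ is a closed subset of $S^1$ consisting of a finite union of closed arcs with rational endpoints, and a direction $u \in \cS(\cU)$ satisfies $\alpha(u)=\infty$ iff $u$ is non-isolated in $\cS(\cU)$, i.e.\ iff $u$ lies in one of the non-degenerate arcs. Let $B \subset \cS(\cU)$ denote the union of these non-degenerate arcs; the first claim of the lemma reduces to showing that $B=\emptyset$.

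Since $\beta<\infty$, there exists an open semicircle $C\subset S^1$ such that $\max\{\alpha(u),\alpha(-u)\}\le \b$ for every $u\in C$; in particular, neither $u$ nor $-u$ can belong to $B$ for $u\in C$, so $B\cup(-B)\subseteq S^1\setminus C$. The set $B\cup(-B)$ is manifestly invariant under the antipodal map $u\mapsto -u$, so applying that map to the inclusion also yields $B\cup(-B)\subseteq -(S^1\setminus C)=\bar C$. Intersecting, $B\cup(-B)$ lies inside $(S^1\setminus C)\cap \bar C$, which is the two-point set $\partial C$. Since $B$ is a union of arcs of positive length it cannot fit in two points, so $B=\emptyset$, and the structure theorem then gives that $\cS(\cU)$ is a finite set of isolated rational directions.

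For the moreover statement I will argue by contradiction. Assume there exists $u_0\in \cS(\cU)\setminus\{u^*,-u^*\}$ with $\alpha(u_0)>\b$; by maximality of $\alpha(u^*)$ we automatically have $\alpha(u^*)>\b$ as well. Pick an open semicircle $C^*$ that realises the minimum defining $\b$ (this infimum is attained because the relevant quantities are integer-valued when finite), so that $\max\{\alpha(u),\alpha(-u)\}\le \b$ for every $u\in C^*$. If any of the four directions $u^*,-u^*,u_0,-u_0$ lay in $C^*$, one of these inequalities would be violated; hence all four must sit in the closed complementary semicircle $\overline{S^1\setminus C^*}$.

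The final observation is that a closed semicircle can contain an antipodal pair only by placing both of its elements on its two-point boundary $\partial C^*$. Applied to the two disjoint antipodal pairs $\{u^*,-u^*\}$ and $\{u_0,-u_0\}$, this forces $\{u^*,-u^*\}=\partial C^*=\{u_0,-u_0\}$, contradicting the hypothesis $u_0\notin\{u^*,-u^*\}$. I do not anticipate any substantive obstacle here; the only mild subtlety is keeping careful track of whether the complement of $C^*$ is taken open or closed when placing the four forbidden directions, together with invoking the structure theorem at the right point to close the first half.
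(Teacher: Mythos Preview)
Your proof is correct and follows essentially the same approach as the paper. For the first statement the paper argues the contrapositive slightly more directly (a non-degenerate arc in $\cS(\cU)$ yields two non-opposite directions of infinite difficulty, forcing $\beta=\infty$), while you package the same geometric fact---a closed semicircle contains a unique antipodal pair---as a set-containment argument; for the ``moreover'' part your argument and the paper's are identical in substance.
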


\begin{proof}
By~\cite {BSU}*{Theorem 1.10}, $\S(\U)$ is a finite union of rational closed intervals of $S^1$, and by~\cite {BSU}*{Lemma~5.2} (see also~\cite{BDMS}*{Lemma~2.7}), if $u \in \S(\cU)$ is a rational direction, then $\alpha(u) < \infty$ if and only {{if}} $u$ is an isolated point of $\cS(\cU)$. Thus, if one of the intervals in $\S(\U)$ is not an isolated point, then there exist two non-opposite stable directions in $S^1$, each with infinite difficulty, and so $\b = \infty$.

Now, suppose that $\U$ is $\b$-unrooted, and that $u \in \cS(\cU)$ satisfies $\alpha(u) > \beta$ and $u \not\in \{u^*,-u^*\}$. Then $u$ and $u^*$ are non-opposite stable directions in $S^1$, each
with difficulty strictly greater than $\beta$, which contradicts the definition of $\b$. 
\end{proof} 

In particular, if $\U$ is $\b$-unrooted then Lemma~\ref{lem:stableset} guarantees the existence of an open semicircle $C$ such that $(C \cup -C) \cap \cS(\cU)$ consists of finitely many directions, each with difficulty at most $\b$. The next lemma provides a corresponding property for $\alpha$-rooted models.

\begin{lemma}\label{cor:Ccritic}
If $\U$ is $\alpha$-rooted, then there exists an open semicircle $C$ such that $C \cap \cS(\cU)$ consists of finitely many directions, each with difficulty at most $\a$. 
\end{lemma}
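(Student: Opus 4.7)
The plan is to read off the desired semicircle directly from the defining formula~\eqref{eq:alphaU} for $\alpha$, and then combine two structural facts about $\cS(\U)$ from~\cite{BSU}.

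First, since $\U$ is critical, the Remark following Definition~\ref{def:alpha} guarantees that $\alpha < \infty$. I would then use~\eqref{eq:alphaU} to select an open semicircle $C \in \cC$ that attains the minimum, so that $\alpha(u) \le \alpha$ for every $u \in C$. The min is genuinely attained, rather than merely an infimum, because the function $C \mapsto \max_{u \in C} \alpha(u)$ is integer-valued and changes only when the boundary of $C$ sweeps across one of the finitely many closed rational intervals that make up $\cS(\U)$ (by~\cite{BSU}*{Theorem~1.10}); in particular it takes only finitely many distinct values as $C$ rotates.

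Next, every stable direction $u \in C$ satisfies $\alpha(u) \le \alpha < \infty$, so by~\cite{BSU}*{Lemma~5.2} (restated in the Remark after Definition~\ref{def:alpha}) it must be an isolated point of $\cS(\U)$. But the isolated points of $\cS(\U)$ are precisely the singleton intervals in the finite decomposition of $\cS(\U)$ provided by~\cite{BSU}*{Theorem~1.10}, so there are only finitely many of them. Therefore $C \cap \cS(\U)$ is a finite set, each element of which has difficulty at most $\alpha$, which is exactly what the lemma claims.

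I do not anticipate any substantive obstacle in carrying out this argument: it is essentially a matter of unwrapping the definitions and invoking the two structural results from~\cite{BSU}. The only mildly delicate point is the attainment of the minimum in~\eqref{eq:alphaU}, handled above by the piecewise-constancy observation. It is worth noting that the $\alpha$-rooted hypothesis $\beta \ge 2\alpha$ plays no role in the proof---the conclusion in fact holds for every critical update family---and is present in the statement only to organise the discussion into the $\alpha$-rooted and $\beta$-unrooted cases, with the latter treated in Lemma~\ref{lem:stableset} under a correspondingly stronger (bilateral) conclusion.
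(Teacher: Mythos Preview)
Your proposal is correct and follows essentially the same route as the paper's proof: pick a semicircle realising the minimum in~\eqref{eq:alphaU}, then use~\cite{BSU}*{Lemma~5.2} to conclude that every stable direction in $C$ is isolated, hence there are only finitely many. Your treatment is slightly more careful than the paper's (you justify attainment of the minimum and invoke~\cite{BSU}*{Theorem~1.10} explicitly for finiteness, whereas the paper leaves both implicit), and your closing observation that the $\alpha$-rooted hypothesis is not actually used is also correct.
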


\begin{proof}
By Definition~\ref{def:alpha}, there exists an open semicircle $C$ such that each $u \in C$ has difficulty at most $\a$. Since $\U$ is critical (and hence $\a$ is finite), it follows from~\cite{BSU}*{Lemma~5.2} (cf. the proof of Lemma~\ref{lem:stableset}) that each $u \in C$ is either unstable, or an isolated element of $\cS(\cU)$, and hence $C \cap \cS(\cU)$ is finite, as claimed. 
\end{proof}

Let us fix (for the rest of the subsection) an open semicircle $C$, containing finitely many stable directions, and such that the following holds:
\begin{itemize}
\item if $\U$ is $\a$-rooted then $\a(v) \le \a$ for each $v \in C$;\smallskip
\item if $\U$ is $\b$-unrooted then $\a(v) \le \b$ for each $v \in C \cup - C$. 
\end{itemize}
Let us also choose $C$ such that its mid-point $u$ belongs to $\bbQ_1$, and denote by $\pm u^\perp$ the boundary points of $C$. When drawing pictures we will always think of $C$ as the semicircle $(-\pi/2, \pi/2)$, though we emphasize that we do not assume that $u$ is parallel to one of the axes of $\Z^2$. We remark that the values of $\a(u^\perp)$ and $\a(-u^\perp)$ will not be important: we will only need to use the fact that they are both finite.

We are now ready to define one of the key notions from~\cite{BSU}, the set of quasi-stable directions. These are directions that are not (necessarily) stable, but which nevertheless it is useful
to treat as if they were. For any $v \in S^1$, let us write $\hat v$ for the direction in $S^1$ that is symmetric to $v$ w.r.t.~the mid-point $u$ of $C$.

\begin{definition}[Quasi-stable directions]\label{def:quasi-stable}
We say that a direction $v\in \Qb_1$ is quasi-stable if either $v$ or $\hat v$ is a member of the set
$$\{u \} \cup \cS(\cU) \cup \bigg( \bigcup_{X \in \U} \bigcup_{x \in X} \big\{ v \in S^1 : \< v,x \> = 0 \big\} \bigg).$$
\end{definition}

Observe that there are only finitely many quasi-stable directions in $C$ (and, if $\beta < \infty$, only finitely many in $S^1$). The key property of the family of quasi-stable directions is given by the following lemma, which allows us to empty the sites near the corners of ``quasi-stable half-rings" (see Definition~\ref{def:half-ring}, below). {{Recall that we write $\ell_v$ for the discrete line $\{x \in \Z^2 : \< x,v \> = 0 \}$.}}

\begin{lemma}[\cite{BSU}*{Lemma~5.3}]\label{lem:quasi}
For every pair $v,v'$ of consecutive quasi-stable directions there exists an update rule $X$ such that
$X \subset \big(\H_v\cup\ell_v\big)\cap\big(\H_{v'}\cup\ell_{v'}\big).$
\end{lemma}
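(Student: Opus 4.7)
The plan is to locate an unstable direction $w$ strictly interior to the arc between $v$ and $v'$, extract an update rule $X \in \cU$ witnessing this instability (so $X \subset \H_w$), and then use an intermediate value argument to force $X$ to lie in $(\H_v \cup \ell_v) \cap (\H_{v'} \cup \ell_{v'})$. This is essentially the argument of~\cite{BSU}*{Lemma~5.3}.

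First I would choose any direction $w$ strictly between $v$ and $v'$ along the arc singled out by consecutiveness (the arc containing no other quasi-stable direction). Since $w$ is not quasi-stable, $w$ is in particular not stable, so $w \notin \cS(\cU)$ and $[\H_w]_\cU \supsetneq \H_w$. Unpacking the definition, there must exist a site $x_0 \in \bbZ^2 \setminus \H_w$ (so $\langle x_0, w \rangle \ge 0$) and an update rule $X \in \cU$ with $X + x_0 \subset \H_w$. Taking inner products with $w$ gives $\langle y, w \rangle < -\langle x_0, w \rangle \le 0$ for every $y \in X$, hence $X \subset \H_w$.

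The core step is then to show $X \subset \H_v \cup \ell_v$; the inclusion $X \subset \H_{v'} \cup \ell_{v'}$ will follow by the symmetric argument with $v'$ in place of $v$. Suppose for contradiction that some $y \in X$ satisfies $\langle y, v \rangle > 0$. Parameterising the sub-arc from $v$ to $w$ continuously and using $\langle y, w \rangle < 0$, the intermediate value theorem yields a direction $v''$ strictly between $v$ and $w$ with $\langle y, v'' \rangle = 0$. Since $y$ is a lattice vector, $v''$ is a rational direction in $\bbQ_1$, and by construction $v''$ lies in the base set $\bigcup_{X \in \cU} \bigcup_{x \in X} \{u \in S^1 : \langle u, x \rangle = 0\}$ from Definition~\ref{def:quasi-stable}; thus $v''$ is quasi-stable. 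But $v''$ lies in the open arc from $v$ to $v'$, contradicting consecutiveness.

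I do not expect any serious obstacle here. The only small subtlety is making "between" unambiguous on $S^1$, which is resolved by fixing once and for all the unique arc between $v$ and $v'$ that is free of quasi-stable directions; all inner-product sign arguments then take place inside a single well-defined arc. What really drives the proof is that the set defining quasi-stability is closed under taking perpendiculars to update-rule vectors, which converts any candidate "bad" $y \in X$ into a forbidden quasi-stable direction in the gap.
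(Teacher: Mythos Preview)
Your argument is essentially the proof from \cite{BSU}*{Lemma~5.3}, which is precisely what the paper invokes (its own proof consists of the citation plus the observation that the result passes to any superset of the base family of quasi-stable directions). One small point: the inference ``$w$ not quasi-stable $\Rightarrow$ $w$ not stable'' needs $w$ to be rational, since Definition~\ref{def:quasi-stable} only declares \emph{rational} directions quasi-stable while $\cS(\cU)$ may a priori contain irrational ones; choosing $w \in (v,v') \cap \bbQ_1$ fixes this immediately.
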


\begin{figure}[ht]
  \centering
  \begin{tikzpicture}[>=latex, scale=0.7]
    \node[circle,fill,inner sep=0pt,minimum size=0.15cm] at (0,0) {};
    \draw (30:1) -- (-150:6) ++(-150:0.4) node {$\ell_{v'}$};
    \draw (160:1) -- (-20:6) ++(-20:0.4) node {$\ell_{v}$};
    \draw[->] (-150:5) -- ++(120:1);
    \path (-150:5) ++(120:1.4) node {$v'$};
    \draw[->] (-20:5) -- ++(70:1);
    \path (-20:5) ++(70:1.4) node {$v$};
    \node at (-78:2) {$\big(\H_v\cup\ell_v\big)\cap\big(\H_{v'}\cup\ell_{v'}\big)$};
  \end{tikzpicture}
\end{figure}

\begin{proof}
The statement was proved in~\cite{BSU} (see also~\cite{BDMS}*{Lemma~3.5}) for the family $\cS(\cU) \cup \big( \bigcup_{X \in \U} \bigcup_{x \in X} \big\{ v \in S^1 : \< v,x \> = 0 \big\} \big)$ of quasi-stable directions, and it therefore holds for any superset of this family. 
\end{proof}

In order to define quasi-stable half-rings, we first need to introduce some additional notation:


\begin{definition}\label{def:strips}
Let $v \in \Qb_1$ with $\a(v)\le \a$. A \emph{$v$-strip} $S$ is any closed parallelogram in $\bbR^2$ with long sides perpendicular to $v$ and short sides perpendicular to $u^\perp$. 
\begin{itemize}
\item The $+$-boundary and $-$-boundary of $S$, denoted $\partial_+ (S)$ and $\partial_- (S)$ respectively, are the sides of $S$ with outer normal $v$ and $-v$.
\item The external boundary $\partial^{\rm ext} (S)$ is defined as that translate of $\partial_+ (S)$ in the $v$-direction which captures for the first time a new lattice point not already present in $S$. 
\item Given $\l > 0$, we define $\partial_{\l}^{\rm ext}(S)$ as the portion of $\partial^{\rm ext}(S)$ at distance $\l$ from its endpoints (see Figure \ref{fig:geom}). 
\end{itemize}
\end{definition}

\begin{figure}[ht]
  \centering
  \begin{tikzpicture}[>=latex, scale=0.5]
\begin{scope}
\draw [ultra thick] (-3,1.5)--(3,-1.5);
\draw [thin,->] (-3,1.5)--(-4,2);
\draw [thin,->] (3,-1.5)--(4,-2);
 \draw [dashed] (-3,1.5)--(-6,3);
 \draw [dashed] (3,-1.5)--(6,-3);
 \draw [->,thick] (0,0)--(-0.5,-1);
\node at (-0.8,-0.5) {$v$};
 \draw [->,thick] (-1.2,2)--(-1.2,3.2);
\node at (-0.3,3.1) {$u^\perp$};
\end{scope}
\begin{scope}[shift={(0.15,0.3)}]
\draw [fill=lightgray] (-4,2)--(4,-2)--(9,-2)--(1,2)--cycle;
\node at (1.3,-2) {$\partial^{\rm ext}_{\l}{(}S{)}$};
\node at (0.9,0.3) {$\partial_{+}(S)$};
\end{scope}
  \end{tikzpicture}
 \caption{A $v$-strip $S$, the $+$-boundary of $S$, the 
   external boundary (solid segment), and its subset $\partial_{\l}^{\rm
  ext}(S)$ (thick solid segment)}
\label{fig:geom}
\end{figure}
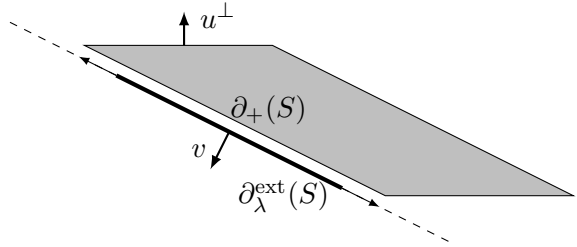
If $v$ is a stable direction, then a $v$-strip needs some ``help" from other infected sites in order to infect its external boundary (in the $\U$-bootstrap process). Our next ingredient (also first proved in~\cite{BSU}) provides us with a set that suffices for this purpose. 

Let $v$ be a quasi-stable direction with difficulty $\alpha(v) \le \a$, and let $Z_v \subset \bbZ^2$ be a set of cardinality $\a$ such that $[\H_v \cup Z_v]_\U \cap \ell_v$ is infinite. (In the language of~\cite{BDMS}, $Z_v$ is called a \emph{voracious set}.) The following lemma (see~\cite{BSU}*{Lemma~5.5} and~\cite{BDMS}*{Lemma~3.4}) states that if $S$ is a sufficiently large $v$-strip, then a bounded number of translates of $Z_v$, together with $S \cap \bbZ^2$, are sufficient to infect $\partial_{\l}^{\rm ext}(S{)}$ for some $\lambda = O(1)$. 

\begin{lemma}\label{lem:strip}
There exist $\l_v>0$, $T_v = \{a_1,\dots,a_r\} \subset {{\ell_v}}$ and $b\in {{\ell_v}}$ such that the following holds. If $S$ is a sufficiently large $v$-strip such that $\partial^{\rm ext} {(}S{)} \cap \bbZ^2 \subset \ell_v$, then 
\begin{equation}\label{eq:fillpartialS}
\partial_{\l_v}^{\rm
  ext}{(}S{)}\cap \bbZ^2 \subset \big[ (S\cap \bbZ^2) \cup (Z_v+a_1+k_1b)\cup\dots\cup(Z_v+a_r+k_rb) \big]_\U
\end{equation}
for every $k_1,\dots, k_r \in \Z$ such that $a_i + k_i b \in \partial_{\l_v}^{\rm ext}{(}S{)}$ for every $i \in [r]$.
\end{lemma}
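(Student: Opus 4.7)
The plan is to reduce the statement to the already-known analysis of voracious sets for the half-plane $\H_v$ and then transfer the conclusion to a sufficiently large $v$-strip, handling the additional boundary effects by choosing $\lambda_v$ to stay away from the corners of $S$.

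First, I would exploit the hypothesis $\alpha(v) \le \a$ together with the defining property of the voracious set: starting from $\H_v \cup Z_v$ and iterating the $\cU$-bootstrap process, one infects infinitely many sites of the line $\ell_v$. By translation invariance in the $v^\perp$ direction, and since the $\cU$-bootstrap process has finite range, the set of infected sites of $\ell_v$ produced from $\H_v \cup Z_v$ must eventually be \emph{periodic} along $\ell_v$: there exists $b \in \ell_v$ such that, for some finite $T_v = \{a_1,\dots,a_r\} \subset \ell_v$, the closure $[\H_v \cup \bigcup_{i\le r}(Z_v + a_i + k_i b)]_\cU$ contains the whole portion of $\ell_v$ strictly between the extremal translates, whenever the $k_i$ are chosen so that $a_i + k_i b$ lies inside the region of interest. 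The set $T_v$ thus encodes the finitely many residue classes modulo $b$ that must be seeded separately in order to cover \emph{every} site of a long interval of $\ell_v$ rather than only a sparse periodic subset.

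Next, I would replace the half-plane $\H_v$ by the $v$-strip $S$. Because each $X\in\cU$ is finite, there is a constant $M = M(\cU)$ bounding the diameter of every update rule, and the above argument in $\H_v$ uses only finitely many bootstrap iterations, so only sites at distance $O(1)$ from $\ell_v$ are ever needed. Therefore, provided the width of $S$ in the $v$ direction is at least this $O(1)$ threshold (which is part of ``sufficiently large''), every step of the $\H_v$-based infection can be replayed verbatim using only $S \cap \bbZ^2$ in place of $\H_v$. The assumption $\partial^{\rm ext}(S)\cap\bbZ^2\subset \ell_v$ ensures that the external boundary that we wish to infect really is a subset of $\ell_v$, so the same combinatorial mechanism applies.

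Finally, the corner effects: the endpoints of $S$ in the $u^\perp$ direction obstruct the bootstrap near the corners, since there an effective ``$\H_v$'' is not available on both sides. Choosing $\lambda_v$ large enough depending on $M$, $|Z_v|$ and the diameter of $T_v$ ensures that every site of $\partial^{\rm ext}_{\lambda_v}(S)$ is sufficiently far from both corners to enjoy the same infection mechanism as a generic site of $\ell_v$ in the $\H_v$ argument, and the inclusion~\eqref{eq:fillpartialS} follows. The main obstacle is verifying that $\lambda_v$, $T_v$, $b$ can be chosen uniformly, independent of $S$; this is precisely what is established in~\cite{BSU}*{Lemma~5.5} and~\cite{BDMS}*{Lemma~3.4}, from which the present statement then follows.
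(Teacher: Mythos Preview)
The paper does not give its own proof of this lemma: it is stated as a citation of \cite{BSU}*{Lemma~5.5} and \cite{BDMS}*{Lemma~3.4}, with no argument beyond that. Your proposal correctly identifies these as the source and ends by deferring to them, so in that sense you are doing exactly what the paper does.

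The heuristic sketch you give before the citation is a reasonable outline of the mechanism behind those cited results, though a couple of points are slightly imprecise. The periodicity along $\ell_v$ comes not from ``translation invariance in the $v^\perp$ direction'' but from translation invariance \emph{along} $\ell_v$ (i.e., in the direction orthogonal to $v$ within the line $\ell_v$); the vector $b$ is the period of the lattice $\ell_v \cap \bbZ^2$. Also, the argument in \cite{BSU} does not quite proceed by first infecting all of $\ell_v$ from $\H_v$ and then restricting to a strip; rather, one shows that a single voracious set $Z_v$ together with $\H_v$ infects an entire half-line of $\ell_v$, and the finitely many translates $a_1,\dots,a_r$ are needed to handle the different residue classes modulo the period and the two directions along $\ell_v$. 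These are minor issues of exposition, and since the paper itself simply cites the result, your proposal is adequate.
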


Let us fix, for each quasi-stable direction $v \in C$, a constant $\l_v > 0$, a set $T_v = \{a_1,\dots,a_r\} \subset {{\ell_v}}$ and a site $b \in {{\ell_v}}$ given by Lemma~\ref{lem:strip}. If $S$ is a sufficiently large $v$-strip such that $\partial^{\rm ext} {(}S{)} \cap \bbZ^2 \subset \ell_v + x$ for some $x \in \bbZ^2$, 
then we will refer to any set of the form 
\begin{equation}\label{def:helping:set}
\big( (Z_v+a_1+k_1b) \cup \dots \cup (Z_v+a_r+k_rb) \big) + x,
\end{equation}
with $a_i + k_i b + x \in \partial_{\l_v}^{\rm ext}{(}S{)}$ for every $i \in [r]$, as a \emph{helping set} for $S$.

We are finally ready to define the key objects we will use to control the movement of empty sites in a critical KCM, the \emph{quasi-stable half-rings}. These are non{-}self-intersecting polygons, obtained by patching together suitable $v$-strips corresponding to quasi-stable directions (see Figure \ref{fig:half-ring}). Recall from Definition~\ref{def:quasi-stable} that, by construction, the set of quasi-stable directions in $C$ is symmetric w.r.t.~the midpoint $u$ of $C$.

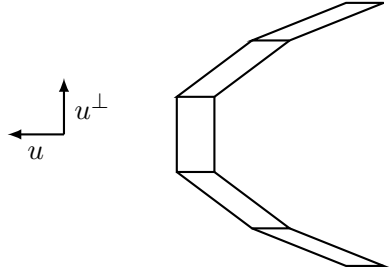
\begin{figure}[ht]
  \centering
  \begin{tikzpicture}[>=latex, scale=0.25]
\begin{scope}[rotate around={180:(10,10)}]
\draw [thick] (0,0)--(5,2)--(7,2)--(2,0)--(0,0);  
\draw [thick] (5,2)--(9,5)--(11,5)--(7,2);
\draw [thick] (9,5)--(9,9)--(11,9)--(11,5);
\draw [thick] (9,9)--(5,12)--(7,12)--(11,9);
\draw [thick] (5,12)--(0,14)--(2,14)--(7,12);
\end{scope}
\begin{scope}[shift={(-5,0)}]
\node at (6.5,12) {$u$};
\node at (9.5,14.5) {$u^\perp$};
\draw [->,thick] (8,13) -- (5,13); 
\draw [->,thick] (8,13)--(8,16);
\end{scope}
  \end{tikzpicture}
   \caption{A quasi-stable half-ring.}\label{fig:half-ring}
\end{figure}
 
\begin{definition}[Quasi-stable half-rings]\label{def:half-ring}
Let $(v_1,\dots, v_m)$ be the quasi-stable directions in $C$, ordered in such a way that $v_i$ and $v_{i+1}$ are consecutive directions for any $i\in [m-1]$, and $v_{i-1}$ comes before $v_i$ in clockwise order. Let $S_{v_i}$ be a $v_i$-strip with length $\ell_i$ and width $w_i$. We say that $\cR := \bigcup_{i=1}^m S_{v_i}$ is a \emph{quasi-stable half-ring} of width $w$ and length $\ell$ if the following holds: 
\begin{enumerate}[(i)]
\item $w_i = w$ and $\ell_i = \ell$ for each $i \in [m]$;
\item $S_{v_i}\cap S_{v_j} = \emptyset$, unless $v_i$ and $v_j$ are consecutive directions, in which case the two strips share exactly one of their short sides and no other point.
\end{enumerate}
\end{definition}

We can finally formulate the ``spreading of infection'' result that we will need later. Given a quasi-stable half-ring $\cR$, we will write $\cR^*$ for the quasi-stable half-ring $\cR + s u$, where $s > 0$ is minimal such that $\big( \cR^* \setminus \cR \big) \cap \bbZ^2$ is non-empty. Also, for any set $U \subset \bbZ^2$, let us write $[A]^U_\cU$ for the closure of $A$ under the $\cU$-bootstrap process restricted to $U$.

\begin{proposition}\label{prop:critic:spread}
There exists a constant $\lambda = \lambda(\cU) > 0$ such that following holds. Let $\cR$ be a quasi-stable half-ring of width $w$ and length $\ell$, 
where $w, \ell \geq \l$. 
Let $U$ be the set of points of $\bbZ^2$ within distance $\lambda$ of $\cR \cup \cR^*$, and let $Z_i$ be a helping set for $S_{v_i}$ for each $i \in [m]$. Then
$$\cR^* \cap \bbZ^2 \subset \big[ \big( \cR \cap \bbZ^2 \big) \cup Z_1 \cup \cdots \cup Z_m \big]^U_\cU.$$ 
\end{proposition}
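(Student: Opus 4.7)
\medskip
\noindent\textbf{Proof plan for Proposition~\ref{prop:critic:spread}.}
The plan is to reduce the claim to two local pieces: filling the external boundary of each strip using Lemma~\ref{lem:strip}, and filling the corner regions between consecutive strips using Lemma~\ref{lem:quasi}. Concretely, I would argue as follows. First, observe that $(\cR^* \setminus \cR) \cap \bbZ^2$ is contained in the (bounded) tubular neighbourhood of $\cR$ of thickness $O(1)$, since $s$ was chosen minimally so that a single new lattice point appears. Thus it suffices to show that, after a bounded number of iterated shifts in direction $u$ by the minimal amount needed to acquire \emph{at least one} new lattice point per strip, the whole of $\cR^*\cap\bbZ^2$ is eventually covered. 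By choosing $\lambda$ sufficiently large (depending only on $\cU$), we can ensure that every intermediate site used below lies in $U$, so that the restriction of the $\cU$-bootstrap process to $U$ is never an obstruction.

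Next, I would carry out one ``sweep''. Fix an index $i$. The helping set $Z_i$ is, by assumption, a set of the form~\eqref{def:helping:set} supported on $\partial_{\l_{v_i}}^{\rm ext}(S_{v_i})$. Since $\ell,w \ge \lambda$ and $\lambda$ is large, the hypotheses of Lemma~\ref{lem:strip} are satisfied for the strip $S_{v_i}$, and applying that lemma yields
\[
\partial_{\l_{v_i}}^{\rm ext}(S_{v_i}) \cap \bbZ^2 \;\subset\; \big[ (S_{v_i}\cap\bbZ^2) \cup Z_i \big]_\cU.
\]
All updates used in this closure take place within a bounded neighbourhood of $S_{v_i}$, hence inside $U$. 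Doing this for every $i\in[m]$ fills every external boundary except for the small corner pieces of length $\l_{v_i}$ at the two ends of each strip.

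The second step is to fill these corners. Let $v_i,v_{i+1}$ be consecutive quasi-stable directions in $C$, and let $y$ be a corner lattice point lying in the wedge $(\bbH_{v_i}\cup\ell_{v_i})\cap(\bbH_{v_{i+1}}\cup\ell_{v_{i+1}})$ shifted to the corner joining $S_{v_i}$ and $S_{v_{i+1}}$. By Lemma~\ref{lem:quasi} there exists an update rule $X\in\cU$ with $X \subset (\bbH_{v_i}\cup\ell_{v_i})\cap(\bbH_{v_{i+1}}\cup\ell_{v_{i+1}})$. Translating $X$ so that its apex is at $y$, every site of $X+y$ either lies in $\cR\cap\bbZ^2$ (the interiors of the adjacent strips, which contain the whole half-plane wedge near the corner, up to a bounded set) or in the portion of $\partial_{\l_{v}}^{\rm ext}$ already infected by Lemma~\ref{lem:strip}. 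By processing the corner sites in order of increasing distance from the already-infected region (a standard peeling argument on the wedge), every corner site is infected via a single application of this update rule; and again all spin flips lie within $U$ by the choice of $\lambda$. At this point, the entire front lattice layer of $\cR$ has been infected, so we have obtained a new quasi-stable half-ring, shifted by one lattice step toward $\cR^*$.

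Finally, I would iterate the sweep above. Since the distance between $\cR$ and $\cR^*$ is $O(1)$ (independently of $w,\ell$), only a bounded number of iterations are needed to cover $\cR^*\cap\bbZ^2$; after each iteration the role of ``$\cR$'' is played by the enlarged infected region, while the original helping sets are no longer needed (only the first sweep required them). The main obstacle is the corner-filling step: one must verify both that every $X+y$ used there lies within $U$ (handled by taking $\lambda$ large enough depending on the diameters of the update rules and the number of quasi-stable directions in $C$, both of which are $O(1)$), and that the peeling order is consistent, i.e., that after each elementary infection at least one further corner site has its entire $X$-translate infected. The latter is a direct consequence of Lemma~\ref{lem:quasi} together with the geometry of the wedge, as already used implicitly in~\cite{BSU,BDMS}.
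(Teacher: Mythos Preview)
Your approach matches the paper's: apply Lemma~\ref{lem:strip} strip by strip to infect the external boundaries away from the ends, then peel off the corner sites one by one using the update rule supplied by Lemma~\ref{lem:quasi}, all within a bounded neighbourhood $U$.

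One correction: the iteration in your final paragraph is unnecessary, and its justification is wrong. By definition $\cR^* = \cR + su$ for the \emph{minimal} $s > 0$ such that $(\cR^*\setminus\cR)\cap\bbZ^2 \ne \emptyset$, so every new lattice point of $\cR^*$ already lies either on some $\partial^{\rm ext}(S_{v_i})$ or within $O(1)$ of a corner; a single sweep therefore suffices, exactly as in the paper. Had further sweeps genuinely been needed, each new front would require its own helping sets on the new external boundaries --- advancing a $v$-strip by another lattice line is again an application of Lemma~\ref{lem:strip} and demands a helping set there --- so the claim that ``only the first sweep required them'' is not correct. This does not affect the argument here (since no iteration is needed), but it matters in Corollary~\ref{cor:critic:spread}, where one \emph{does} iterate to reach $\cR' = \cR + wu$ and must invoke a fresh helping set at every step.
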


\begin{proof}
This is a straightforward consequence of Lemmas~\ref{lem:quasi} and~\ref{lem:strip}. To see this, note first that, by Lemma~\ref{lem:strip}, the closure of $\big( \cR \cap \bbZ^2 \big) \cup Z_1 \cup \cdots \cup Z_m$ under the $\cU$-bootstrap process contains all points of $\cR^* \cap \bbZ^2$ except possibly those that lie within distance $O(1)$ of a corner of $\cR$. Moreover, the path of infection described in the proof of Lemma~\ref{lem:strip} in~\cite{BSU,BDMS} only uses sites within distance $O(1)$ of the $v$-strip $S$. Thus, if $\lambda$ is chosen large enough, we have $\partial_{\l/4}^{\rm ext}(S_{v_i}) \cap \bbZ^2 \subset \big[ \big( \cR \cap \bbZ^2 \big) \cup Z_i \big]^U_\cU$ for each $i \in [m]$. 

Now, by Lemma~\ref{lem:quasi}, it follows that the set $\big[ \big( \cR \cap \bbZ^2 \big) \cup Z_i \cup Z_{i+1} \big]^U_\cU$ contains the remaining sites of $\partial^{\rm ext}(S_{v_i}) \cap \bbZ^2$ and $\partial^{\rm ext}(S_{v_{i+1}}) \cap \bbZ^2$ that lie within distance $\lambda/4$ of the intersection of $S_{v_i}$ and $S_{v_{i+1}}$. Indeed, these sites can be infected one by one, working towards the corner, using sites in $\cR \cup \partial_{\l/4}^{\rm ext} (S_{v_i}) \cup \partial_{\l/4}^{\rm ext}(S_{v_{i+1}})$. Since this holds for each $i \in [m-1]$, it follows that the whole of $\cR^* \cap \bbZ^2$ is infected, as claimed.
\end{proof}


Given a quasi-stable half-ring $\cR$ of width $w$, we will write $\cR'$ for the quasi-stable half-ring $\cR + w u$, i.e., the minimal translate of $\cR$ in the $u$-direction such that $\cR \cap \bbZ^2$ and $\cR' \cap \bbZ^2$ are disjoint.

\begin{corollary}
\label{cor:critic:spread}
There exists a constant $\lambda = \lambda(\cU) > 0$ such that following holds. Let $\cR$ be a quasi-stable half-ring of width $w$ and length $\ell$, 
and suppose that $w \geq \lambda$ and $\ell \geq \lambda$. 
Let $U$ be the set of points of $\bbZ^2$ within distance $\lambda$ of $\cR\cup \cR'$, and let $A \subset U$ be such that for any quasi-stable direction $v$, and any $v$-strip $S_v$ such that $\partial^{\rm ext}(S_{v}) \cap \cR'$ has length at least $\ell$, there exists a helping set for $S_v$ in $A$. Then
$$\cR' \cap \bbZ^2  \subset \big[ \big( \cR \cap \bbZ^2 \big)\cup A \big]^U_\cU.$$ 
\end{corollary}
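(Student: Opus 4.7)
I would prove the corollary by iterating Proposition~\ref{prop:critic:spread} along a chain of small translates of $\cR$ toward $\cR'$. Set $\cR_0 := \cR$, and for each $k \ge 0$ define $\cR_{k+1} := (\cR_k)^*$, the minimal translate of $\cR_k$ in direction $u$ that captures a new lattice point (as in Proposition~\ref{prop:critic:spread}). Since $\cR' = \cR + wu$ contains only $O(w\ell)$ lattice points beyond $\cR$ and every step captures at least one new point, after some finite $N$ we have $\cR' \cap \Z^2 \subset \bigcup_{k \le N}(\cR_k \cap \Z^2)$.

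At each step $k < N$, Proposition~\ref{prop:critic:spread} gives
\[
\cR_{k+1} \cap \Z^2 \subset \big[ (\cR_k \cap \Z^2) \cup Z_1^{(k)} \cup \cdots \cup Z_m^{(k)} \big]^{U_k}_\cU,
\]
where $U_k$ is the $\lambda_0$-neighbourhood of $\cR_k \cup \cR_{k+1}$ (with $\lambda_0$ the constant of the proposition), and each $Z_i^{(k)}$ is a helping set for the constituent $v_i$-strip $S_{v_i}^{(k)}$ of $\cR_k$. Composing these inclusions, using monotonicity of the $\cU$-bootstrap process under inclusion of the ambient set, yields
\[
\cR' \cap \Z^2 \subset \Big[ (\cR \cap \Z^2) \cup \bigcup_{k=0}^{N-1}\bigcup_{i=1}^{m} Z_i^{(k)} \Big]^{U}_\cU,
\]
as required, provided (i) each $U_k \subset U$, and (ii) each $Z_i^{(k)} \subset A$. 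Containment (i) is automatic once the constant $\lambda$ in the corollary is taken sufficiently large compared with $\lambda_0$, since all intermediate half-rings lie in a bounded neighbourhood of $\cR \cup \cR'$ (the half-ring's width $w$ matches the total translation distance, so $\cR \cup \cR'$ already covers the corridor they sweep out up to an $O(1)$ margin).

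The key step, and the one I expect to be the principal obstacle, is point (ii): producing each $Z_i^{(k)}$ from $A$. By the hypothesis of the corollary, a helping set in $A$ is guaranteed for every $v$-strip whose external boundary meets $\cR'$ in a segment of length at least $\ell$, so it suffices to verify this for each intermediate strip $S_{v_i}^{(k)} = S_{v_i} + s_k u$ with $0 \le s_k < w$. Geometrically, the family $\{S_{v_i} + s u : 0 \le s \le w\}$ sweeps out precisely the union of the $v_i$-strips of $\cR$ and $\cR'$, and consequently $\partial^{\rm ext}(S_{v_i}^{(k)})$ lies within this swept region and meets the portion belonging to $\cR'$ along essentially the full length $\ell$. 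A bounded number of edge cases at the two ends of the chain, together with the corners where consecutive $v_i$-strips meet (handled via Lemma~\ref{lem:quasi} exactly as in the proof of Proposition~\ref{prop:critic:spread}), require only additive constant adjustments to $\ell$, $w$ and $\lambda$, all of which can be absorbed into the choice of $\lambda = \lambda(\cU)$.
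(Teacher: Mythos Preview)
Your proposal is correct and takes essentially the same approach as the paper's proof, which is simply to iterate Proposition~\ref{prop:critic:spread} along the chain $\cR_0 = \cR$, $\cR_{k+1} = \cR_k^*$ until $\cR'$ is covered. Your write-up is in fact considerably more detailed than the paper's three-sentence argument; in particular, your identification of point~(ii) as the substantive thing to check, and your geometric justification that each intermediate $\partial^{\rm ext}(S_{v_i}^{(k)})$ meets $\cR'$ in length essentially $\ell$ (so that the hypothesis on $A$ supplies the needed helping set), is exactly the content the paper leaves implicit.
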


\begin{proof}
By construction, each $v_i$-strip of $\cR$ has a helping set in $\cR'$. Therefore, by Proposition~\ref{prop:critic:spread}, the $\cU$-bootstrap process restricted to $U$ is able to infect the quasi-stable half-ring $\cR^*$. We then repeat with $\cR$ replaced by $\cR^*$, and so on, until the entire quasi-stable half-ring $\cR'$ has been infected.     
\end{proof}

Observe that, under the additional assumption that each quasi-stable direction $v$ has a helping set contained in $\ell_v$, we may choose $A$ to be a subset of $\cR'$, but that in general we may (at some stage) need a helping set not contained in $\cR'$ in order to advance in the $u$-direction.


\begin{remark}\label{rem:helping-sets}
Later on, we will also need the above results in the
slightly different setting in which the first $v_1$-strip
entering in the definition of $\cR$ is longer than the others, while
all of the {other} $v_j$-strips, $j\neq 1$ have the same length.
In this case we will refer to $\cR$ as  an
\emph{elongated} quasi-stable half-ring. For
simplicity we preferred to state Proposition~\ref{prop:critic:spread}
in the slightly less general {setting above}, but exactly the same proof applies if $\cR$ is an elongated quasi-stable half-ring.   
\end{remark}

\section{Supercritical KCM: proof of Theorem~\ref{mainthm:1}}\label{sec:supercritical}

In this section we shall prove Theorem~\ref{mainthm:1}, which gives a sharp (up to a constant factor in the exponent) upper bound on the mean infection time for a supercritical KCM. We will first (in Section~\ref{sec:supercritic:unrooted}) give a detailed proof in the case that $\cU$ is unrooted, and then (in Section~\ref{sec:supercritic:rooted}) explain briefly how the proof can be modified to prove the claimed bound for rooted models.

\subsection{The unrooted case}\label{sec:supercritic:unrooted}

Let $\cU$ be a supercritical, unrooted, two-dimensional update family; we are required to show that there exists a constant $\lambda = \lambda(\cU)$ such that 
$$\bbE_\mu(\t_0) \leq q^{-\lambda}$$
for all sufficiently small $q > 0$. To do so, recall first from~\eqref{eq:mean-infection} that $\bbE_\mu(\t_0) \le \trel(q,\cU)/q$, and therefore, by Definition~\ref{def:PC}, it will suffice to prove that
\begin{equation}\label{eq:supercritical:aim}
\var(f) \leq q^{-\lambda} \sum_x \mu\big( c_x \var_x(f) \big)
\end{equation}
for some $\lambda = \lambda(\cU) > 0$ and all local functions $f$, where $c_x$ denotes the kinetic constraint for the KCM, i.e., $c_x$ is the indicator function of the event that there exists an update rule $X\in \cU$ such that $\o_y=0$ for each $y \in X + x$. We will deduce a bound of the form~\eqref{eq:supercritical:aim} from Theorem~\ref{thm:CPI} and Proposition~\ref{prop:bootsc}.  

Recall the construction and notation described in Sections~\ref{sec:setting} and~\ref{sec:supercritical:bootstrap}; in particular, recall the definitions 
of the blocks $V_i$, of the parameters $n_1$ and $n_2$ (which determine the side lengths of the basic rectangle $R$), and the choice of {$v$} as the midpoint of an open semicircle $C \subset S^1$ such that the set $C \cup -C$ contains no stable directions. As anticipated in Section \ref{sec:setting}, the choice of the good and super-good events $G_2 \subset G_1 \subseteq S$ entering in Theorem \ref{thm:CPI}, is, in this case, extremely simple. 

\begin{definition}
If $\U$ is a supercritical two-dimensional update family, then: 
\begin{enumerate}[(a)]
\item every block $V_i$ satisfies the \emph{good event} $G_1$ for $\U$ (\ie $G_1=S$);
\item a block $V_i$ satisfies the \emph{super-good event} $G_2$ for $\U$ if and only if it is empty. 
\end{enumerate}
\end{definition}

Let us fix the parameters $n_1$ and $n_2$ to be $O(1)$, but sufficiently large {{so that}} Proposition~\ref{prop:bootsc} holds. It follows that if $V_{(0,0)}$ is super-good, then the blocks $V_{(-1,0)}$ and $V_{(1,0)}$ (its nearest neighbours to the left and right respectively) lie in the closure under the $\cU$-bootstrap process of the empty sites in $V_{(0,0)}$. In particular, 
$$t^\pm = \min\big\{ t > 0 \,:\, A_t \supseteq V_{(\pm 1,0)} \big\},$$ 
are both finite, where $A_t$ is the set of sites infected after $t$ steps of the $\cU$-bootstrap process, starting from $A_0 = V_{(0,0)}$ (see Definition \ref{def:Uboot}). With foresight, define
\begin{equation}\label{def:super:Lambda}
\L:= \big( A_{t^-} \setminus V_{(0,0)} \big) + n_1 \vec v,
\end{equation}
and note that $\L\cap V_{\vec e_1} = \emptyset$ and $V_{(0,0)} \subset \L$.

\begin{proof}[Proof of part~$(a)$ of Theorem~\ref{mainthm:1}]
The first step is to apply Theorem~\ref{thm:CPI} to the probability space $(S^{\bbZ^2},\mu)$ described in Section~\ref{sec:setting}, in which each `block' variable $\o_i\in S$ is given by the {{collection $\{ \o_x \}_{x \in V_i} \in \{0,1\}^{V_i}$ of i.i.d.~Bernoulli($p$) variables.}} Recall that $p_1 = \hat\mu(G_1)$ and $p_2 = \hat\mu(G_2)$ are the probabilities of the good and super-good events, respectively, and note that, in our setting, $p_1 = 1$ and $p_2 \geq q^{O(n_1n_2)} = q^{O(1)}$. It follows, using~\eqref{eq:8FA}, that
\begin{equation}\label{eq:CPI:supercritical:application}
\var(f) \leq \frac{1}{q^{O(1)}} \sum_{i \in \bbZ^2} \mu\left( \id_{\{\text{either }V_{i+\vec e_1}\text{ or }V_{i-\vec e_1}\text{ is empty}\}}\var_{V_i}(f) \right)
\end{equation}
for all local functions $f$, where $\var_{V_i}(f)$ denotes the variance with respect to the variables $\{ \o_x \}_{x \in V_i}$, given all the other variables $\{ \o_y \}_{y \in \bbZ^2 \setminus V_i}$. 

To deduce~\eqref{eq:supercritical:aim}, it will suffice (by symmetry) to prove an upper bound on the right-hand side of~\eqref{eq:CPI:supercritical:application} of the form
\begin{equation}\label{eq:supercritical:var:Vzero}
\mu\left(\id_{\{V_{\vec e_1}\text{ is empty}\}} \var_{V_{(0,0)}}(f) \right) \le \, \frac{1}{q^{O(1)}}\sum_{x\in \L \cup V_{\vec e_1}}\mu\big( c_x \var_x(f) \big)
\end{equation}
for the set $\L$ defined in~\eqref{def:super:Lambda}, since the elements of $\L \cup V_{\vec e_1}$ are all within distance $O(1)$ from the origin, and so we may then simply sum over all $i \in \bbZ^2$. 

 
To prove~\eqref{eq:supercritical:var:Vzero}, the first step is to observe that, by the convexity of the variance, and recalling that $\L\cap V_{\vec e_1}=\emptyset$ and $V_{(0,0)}\subset \L$, we have
\begin{equation}\label{eq:super:convexity}
\mu\Big( \id_{\{V_{\vec e_1}\text{ is empty}\}} \var_{V_{(0,0)}}(f) \Big) \le \, \mu\left(\id_{\{V_{\vec e_1}\text{ is empty}\}}\var_{\L}(f) \right).
\end{equation}
To conclude we appeal to the following result which, for later
purposes, we formulate in a slightly more general setting than is needed here. In what follows, for any $\o\in \O$ and $U \subset \bbZ^2$, we shall write $[\o]_\cU^{U}$ for the closure of the set $\big\{ x \in \bbZ^2 : \ \o_x=0 \big\}$ {under} the {$\cU$-}bootstrap process restricted to ${U}$.   

\begin{lemma}
 \label{lem:var0}
Let $A,B \subset \bbZ^2$ be disjoint sets, and let $\cE$ be an event
depending only on $\o_{B}$. Suppose that there exists a set $U \supset
A \cup B$ such that $B \subset [\o]_\cU^{U}$ for any $\o\in \{0,1\}^{U}$ for which $A$ is
empty and $\o_{B}\in \cE$. Then
\begin{equation}\label{eq:lem:var0}
\mu\Big(\id_{\{A\text{ is empty}\}}\var_{B}\big( f \tc \cE \big) \Big) \leq 
 |{U}| q^{-|{U}|} \frac{2}{pq}\sum_{x \in {U}}\mu\big( c_x\var_x(f) \big)
\end{equation}
for any local function $f$.
\end{lemma}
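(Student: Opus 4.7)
The plan is to reduce the conditional local variance on $B$ to a full variance on $U$ via an enlargement step, and then to control this full variance by a canonical-paths argument whose input is precisely the bootstrap hypothesis $B \subset [\o]^U_\cU$.

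First I would enlarge the variance. Conditioning on $\o_{\bbZ^2 \setminus U}$ and applying the law of total variance (with respect to the decomposition of $\o_{U \setminus A}$ into its $B$ and $U \setminus (A \cup B)$ components) to the measure $\mu$ restricted to $\{\o_A = 0\}$, one gets
$$\mathbb{E}\big[ \var_B(f \mid \cE) \bigm| \o_A = 0,\, \o_{\bbZ^2 \setminus U}\big] \;\le\; \var_{U \setminus A}\big( f \bigm| \cE,\, \o_A = 0,\, \o_{\bbZ^2 \setminus U}\big),$$
since $\var_B(f\mid \cE)$ depends only on $\o_{\bbZ^2 \setminus B}$. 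Multiplying by $\mu(\o_A = 0) = q^{|A|}$ and integrating over $\o_{\bbZ^2 \setminus U}$ reduces the problem to a Poincar\'e-type inequality controlling $\var_{U \setminus A}(f \mid \cE, \o_A = 0)$ by the Dirichlet form $\sum_{x \in U} \mu_U(c_x \var_x(f))$.

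Second, I would prove this Poincar\'e inequality by canonical paths on the auxiliary KCM-style chain on the state space $SS := \{\o \in \{0,1\}^U : \o_A = 0,\, \o_B \in \cE\}$ with stationary law $\mu_U|_{SS}$. The bootstrap hypothesis furnishes, for every admissible $\o$, an explicit sequence of at most $|U|$ legal single-site $\cU$-updates in $U$ that transforms $\o$ into a configuration whose $U$-restriction is empty. Concatenating two such sequences (via reversibility of legal flips) yields, between any two admissible configurations, a canonical path of length at most $2|U|$ consisting of legal single-site flips.

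Third, I would apply the standard Sinclair--Jerrum congestion bound. Telescoping along the canonical path and using the identity $(f(\eta^x) - f(\eta))^2 = \var_x(f)[\eta]/(pq)$ for each legal single-site flip at $x$ gives a factor $2|U|/(pq)$, while the congestion at any edge is controlled by using the crude ratio bound $\mu_U(\eta) \ge q^{|U|}$ for every intermediate configuration (contributing the factor $q^{-|U|}$). Reassembling, the factors $q^{|A|}$ from enlargement cancel with $\mu(\o_A = 0,\cE) = q^{|A|} \mu(\cE)$ in the denominator of the Poincar\'e constant for the restricted chain, and the residual $1/\mu(\cE) \le q^{-|B|} \le q^{-|U|}$ is absorbed into the $q^{-|U|}$ prefactor, yielding the claimed bound.

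The main obstacle is that canonical paths may leave $SS$ at intermediate steps, since flipping sites in $B$ can temporarily violate $\o_B \in \cE$. This is handled by allowing paths to traverse arbitrary configurations in $\{0,1\}^U$ with $\o_A = 0$ (the $\cU$-bootstrap sequence naturally flips sites only to $0$, so $\o_A = 0$ is preserved throughout), and absorbing the resulting weight mismatch into the factor $q^{-|U|}$ via the uniform lower bound on $\mu_U$. The cleanliness of the stated bound --- in particular the absence of any $\mu(\cE)$ factor --- ultimately reflects the fact that even a very rare event $\cE$ satisfies $\mu(\cE) \ge q^{|B|} \ge q^{|U|}$, so the $q^{-|U|}$ prefactor uniformly dominates the worst-case blow-up from conditioning.
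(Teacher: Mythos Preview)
The proposal contains a genuine gap in the canonical-path construction. You claim that the bootstrap hypothesis furnishes, for every admissible $\o$, a sequence of legal flips in $U$ transforming $\o$ into the configuration that is empty on all of $U$. But the hypothesis only says $B \subset [\o]_\cU^U$; it does \emph{not} say $U \subset [\o]_\cU^U$. Sites in $U \setminus (A \cup B)$ need not be infectable by the bootstrap process restricted to $U$ (for instance, $U$ may contain a site $x$ for which no translate $X+x$ of an update rule lies inside $U$). Hence your path through the all-empty-on-$U$ configuration, and therefore any path connecting two configurations of $SS$ that differ on $U\setminus(A\cup B)$, is not guaranteed to exist. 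Since your enlargement step forces you to compare configurations differing on all of $U\setminus A$, this is fatal to the argument as written.

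The difficulty stems from the enlargement step, which is unnecessary and counterproductive. The paper works directly with $\var_B(f\mid\cE)$ and bounds it using the $\o_{\bbZ^2\setminus B}$-\emph{dependent} reference point $(0_B,\o_{\bbZ^2\setminus B})$ via $\var_B(f\mid\cE)\le \mu_B(\cE)^{-1}\sum_{\eta_B\in\cE}\mu_B(\eta_B)\bigl(f(\eta_B,\o_{\bbZ^2\setminus B})-f(0_B,\o_{\bbZ^2\setminus B})\bigr)^2$. Now one only needs a legal path from $(\eta_B,\o_{\bbZ^2\setminus B})$ to $(0_B,\o_{\bbZ^2\setminus B})$: the forward half empties $B$ (this is exactly what the hypothesis provides), and the backward half restores $\o_{U\setminus B}$ by reversing that sequence while \emph{skipping} the flips in $B$; monotonicity of the constraints keeps every skipped-reverse flip legal. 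Moreover, because the resulting bound on each squared increment does not depend on $\eta_B$, the factor $\mu_B(\cE)^{-1}$ cancels exactly against the average over $\eta_B\in\cE$ --- whereas in your accounting the residual $1/\mu(\cE)$ would have to be multiplied onto, not ``absorbed into'', the congestion factor $q^{-|U|}$, yielding a bound worse than stated.
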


Before proving the lemma we conclude the proof of part (a) of Theorem \ref{mainthm:1}. 
We apply the lemma with $A = V_{\vec e_1}$, $B = \L$, $U = A \cup B$ and $\cE$ the trivial event, i.e., $\cE = \O_{B}$. Indeed, by construction (see~\eqref{def:super:Lambda}), $\L\subset \big[ V_{\vec e_1} \big]_{\cU}^{U}$.
Thus \eqref{eq:lem:var0} becomes
\begin{equation}\label{eq:confusing}
\mu\left( \id_{\{V_{\vec e_1}\text{ is empty}\}} \var_\L(f) \right) \le \, |U| q^{-|U|}\frac{{{2}}}{pq} \sum_{x \in U} \mu\big( c_x \var_x(f) \big).
\end{equation}
Since $|U| = O(1)$, and using~\eqref{eq:super:convexity}, we conclude that for all $i \in \bbZ^2$,
$$\mu\left(\id_{\{V_{ i+\vec e_1}\text{ is empty}\}}\var_{V_{i}}(f)\right) \le \, \frac{1}{q^{O(1)}} \sum_{x \in U_i} \mu\big( c_x \var_x(f) \big),$$
where $U_i$ is the analogue of $U$ for the block $V_i$. 

As noted above, summing over $i \in \bbZ^2$ and using~\eqref{eq:CPI:supercritical:application}, we obtain the Poincar\'e inequality~\eqref{eq:supercritical:aim} with constant $q^{-O(1)}$, and by~\eqref{eq:mean-infection} and Definition~\ref{def:PC} it follows that there exists a constant $\lambda = \lambda(\cU)$ such that 
$$\bbE_\mu(\t_0) \leq \frac{\trel(q,\cU)}{q} \leq q^{-\lambda},$$
for all sufficiently small $q > 0$, as required. Since the bootstrap infection time $T_\cU$ of a supercritical update family satisfies $T_\cU = q^{-\Theta(1)}$, it also follows that $\bbE_\mu(\t_0) \le T_\cU^{O(1)}$. 
\end{proof}

\begin{proof}[Proof of Lemma \ref{lem:var0}]
Observe first that, for any $\o \in \O$, 
\begin{gather}\label{eq:super:variance:basic:upper:bound}
\var_B\big( f \tc \cE \big)(\o_{\bbZ^2\setminus B}) \leq \frac{1}{\mu_B(\cE)}\sum_{\eta_B\in \cE} \mu_B( \eta_B ) \Big( f\big(\eta_B,\o_{\bbZ^2\setminus B} \big) - f\big(0, \o_{\bbZ^2\setminus B}\big) \Big)^2,
\end{gather}
since $\Ex\big[ (X - a)^2 \big]$ is minimized by taking $a = \Ex[X]$, where $\big( 0, \o_{\bbZ^2 \setminus B} \big)$ denotes the configuration that is equal to $\o_{\bbZ^2 \setminus B}$ outside $B$, and empty inside $B$. 

We will break each term on the right-hand side of~\eqref{eq:super:variance:basic:upper:bound} into the sum of single spin-flips using the $\cU$-bootstrap process as follows. Fix $\o \in \O$ such that $A$ is empty, and fix $\eta_B \in \cE$. Using the assumption of the lemma, we claim that there exists a path $\g \equiv (\o^{(0)},\dots,\o^{(m)})$ in $\O$ such that:  
\begin{enumerate}[(i)]
\item $\o^{(0)} = (\eta_B,\o_{\bbZ^2\setminus B})$ and $\o^{(m)} = (0,\o_{\bbZ^2\setminus B})$;
  \item the length $m$ of $\g$ satisfies $m \leq 2|U|$;
\item for each $k=1,\dots,m$, there exists a vertex $x^{(k)} \in U$ such that 
\begin{itemize}
\item the configuration $\o^{(k)}$ is obtained from $\o^{(k-1)}$ by flipping the value at $x^{(k)}$;
\item this flip is legal, i.e., $c_{x^{(k)}}\big( \o^{(k-1)} \big) = 1$.
\end{itemize}
\end{enumerate}
We construct $\g$ in two steps: first we empty all of $B$, and possibly some of $U \setminus B$; then we reconstruct $\o_{\bbZ^2\setminus B}$ without changing $\o_B$. To spell out the details, observe first that, since $B \subset \big[ \big( \eta_B,\o_{U \setminus B} \big) \big]_\cU^{U}$, there exists a sequence of legal flips in $U$ connecting $\big( \eta_B,\o_{\bbZ^2 \setminus B} \big)$ to a configuration with $A \cup B$ empty. By choosing a minimal such sequence, we may assume that all of the flips are from occupied to empty, and therefore that this first part of the path has length at most $|U|$. 

Now, to reconstruct $\o_{\bbZ^2\setminus B}$, we simply run the same sequence backwards, except without performing the steps inside $B$. Note that all of these flips are legal, since skipping the steps inside $B$ only creates additional empty sites, and that this second part of the path also has length at most $|U|$, as required.

It follows, using Cauchy--Schwarz, that 
\begin{multline*}
\Big( f\big(\eta_B,\o_{\bbZ^2\setminus B} \big) - f\big(0, \o_{\bbZ^2\setminus B}\big) \Big)^2 \leq m \sum_{k = 1}^{m} c_{x^{(k)}}\big( \o^{(k-1)} \big) \Big( f\big(\o^{(k)}\big) - f\big( \o^{(k-1)} \big) \Big)^2 \\
\leq 2 |U| \frac{1}{\mu_*}\frac{1}{pq}\sum_{x\in {U}} \sum_{\eta\in \{0,1\}^{U}}\mu_{U}(\eta)c_x(\eta, \o_{\bbZ^2\setminus {U}})\ pq \Big( f\big(\eta^{(x)}, \o_{\bbZ^2\setminus {U}}\big) - f\big( \eta, \o_{\bbZ^2\setminus U} \big) \Big)^2,
\end{multline*}
for any $\o$ in which $A$ is empty, and any $\eta_B \in \cE$, where $\mu_*=\min_{\eta\in \{0,1\}^{U}} \mu_{U}(\eta) = q^{|U|}$, and $\eta^{(x)}$ denotes the configuration obtained from $\eta$ by flipping the spin at $x$. Notice that the right-hand side does not depend on $\eta_B$, and that $pq \big( f\big(\eta^{(x)}, \o_{\bbZ^2\setminus {U}}\big) - f\big( \eta, \o_{\bbZ^2\setminus {U}} \big) \big)^2$ is the local variance $\var_x(f)$ computed for the configuration $\o\equiv (\eta,  \o_{\bbZ^2\setminus {U}})$. 

Hence, using~\eqref{eq:super:variance:basic:upper:bound}, we obtain
$$\id_{\{A \text{ is empty}\}} \var_B\big( f \tc \cE \big)(\o_{\bbZ^2\setminus B}) \leq \, \frac{2 |U|q^{-|{U}|}}{pq} \sum_{x\in U}\mu_{U}(c_x\var_x(f)\bigr)( \o_{\bbZ^2\setminus U})$$
for any $\o \in \O$, and inequality~\eqref{eq:lem:var0} follows by averaging using the measure $\mu$.
\end{proof}

\subsection{The rooted case}\label{sec:supercritic:rooted}

Let $\cU$ be a supercritical, rooted, two-dimensional update family, let $C\subset S^1$ be a semicircle with no stable directions 
and recall that, thanks to ~\eqref{eq:mean-infection}, it will suffice
to prove a Poincar\'e inequality (cf.~\eqref{eq:supercritical:aim})
with constant $q^{{-}O(\log(1/q))}$. To prove this we will follow almost exactly the same route of the unrooted case, with the same definition of the blocks $V_i$ and of the good and super-good events. We will therefore
only give a very brief sketch of the proof in this new setting. 

The {{main difference}} w.r.t.~the unrooted case is that now the opposite semicircle $-C$ will necessarily contain some stable directions. This forces us to use the oriented
Poincar\'e inequality~\eqref{eq:8East} from Theorem~\ref{thm:CPI}
instead of the unoriented one~\eqref{eq:8FA}, because in this case (see
Proposition~\ref{prop:bootsc} and Remark~\ref{rem:left-right}) a super-good block is able to infect the block to its left but not the block to its right, \ie $V_{(-1,0)}\subset [V_{(0,0)}]_\cU$ but $V_{(1,0)} \not\subset [ V_{(0,0)} ]_\cU$. 

\begin{proof}[Proof of part~$(b)$ of Theorem~\ref{mainthm:1}]
We again apply Theorem~\ref{thm:CPI} to the probability space $(S^{\bbZ^2},\mu)$ described in Section~\ref{sec:setting}, but we use~\eqref{eq:8East} instead of~\eqref{eq:8FA}. Recalling that $p_1 = 1$ and $p_2 = q^{O(1)}$, we obtain
\begin{equation}\label{eq:CPI:supercritical:application:rooted}
\var(f) \leq \frac{1}{q^{O(\log(1/q))}} \sum_{i \in \bbZ^2} \mu\left( \id_{\{V_{i+\vec e_1}\text{ is empty}\}}\var_{V_i}(f) \right)
\end{equation}
for all local functions $f$. As before, using translation invariance, we only examine the $i = (0,0)$ term in the above sum. We claim that 
\begin{equation}\label{eq:supercritical:var:rooted}
\mu\left(\id_{\{V_{\vec e_1}\text{ is empty}\}} \var_{V_{(0,0)}}(f) \right) \le \, \frac{1}{q^{O(1)}} \sum_{x \in {{U}}}\mu\big( c_x \var_x(f) \big)
\end{equation}
{for ${{U}} = V_{\vec e_1} \cup \L$,} where $\L$ is the set defined in~\eqref{def:super:Lambda}. However, the proof of~\eqref{eq:supercritical:var:rooted} is identical to that of~\eqref{eq:supercritical:var:Vzero}, since Proposition~\ref{prop:bootsc} implies that $V_{(0,0)}$ can be entirely infected by $V_{\vec e_1}$. We therefore obtain the Poincar\'e inequality
\begin{equation}\label{eq:supercritical:aim:rooted}
\var(f) \le  \frac{1}{q^{O(\log(1/q))}}\sum_x \mu\big( c_x \var_x(f) \big)
\end{equation}
for all local functions $f$. Thus $\trel(q,\cU) \leq q^{-O(\log(1/q))}$, and hence
$$\bbE_\mu(\t_0) \leq \frac{\trel(q,\cU)}{q} \,\le \, q^{-O(\log(1/q))} \, = \, T_\cU^{O(\log T_\cU)},$$
as required, because $T_\cU = q^{-\Theta(1)}$. 
\end{proof}

\section{Critical KCM: proof of Theorem \ref{mainthm:2} under a simplifying assumption}\label{sec:critic-ub}

In this section we shall prove Theorem~\ref{mainthm:2} under the following additional assumption (see below): every stable direction $v$ with finite difficulty has a voracious set that is a subset of the line $\ell_v$. By doing so, we avoid some technical complications (mostly related to the geometry of the quasi-stable half-ring) which might obscure the main ideas behind the proof. The changes necessary to treat the general case are spelled out in detail in Section~\ref{sec:fullgen}. 

\begin{assumption}\label{easy}
For any stable direction $u \in \cS$ with finite difficulty $\a(u)$, there exists a set $Z_u \subset \ell_u$ of cardinality $\a(u)$ such that $\big[ \H_u \cup Z_u \big]_{\cU} \cap \ell_u$ is infinite.  
\end{assumption}

As in Section~\ref{sec:supercritical}, our main task will be to establish a suitable upper bound on the relaxation time $\trel(\cU;q)$. In Section~\ref{sec:critical:rooted} we will first analyse the $\a$-rooted case and the starting point will be the constrained Poincar\'e inequality~\eqref{eq:9East}; the proof the $\b$-unrooted case (see Section \ref{sec:beta-unrooted})
will be essentially the same, the main difference being that~\eqref{eq:9East} will be replaced by~\eqref{eq:9FA}. 


\subsection{$\alpha$-rooted update families}\label{sec:critical:rooted}

Let $\cU$ be a critical, $\alpha$-rooted, two-dimensional update
family, and recall from Definition~\ref{def:alpha:rooted} that $\cU$
has difficulty $\a$, and bilateral difficulty at least $2\a$. The
properties of $\cU$ that we will need below have already been proved
in Section~\ref{sec:critic:spread}; they all follow from the fact
(see Lemma~\ref{cor:Ccritic}) that there exists an open semicircle $C$
such that $C \cap \cS(\cU)$ consists of finitely many directions, each
with difficulty at most $\a$. In particular, we will make crucial use
of Corollary~\ref{cor:critic:spread}.  

We will prove that, if Assumption~\ref{easy} holds, then there exists a constant $\lambda = \lambda(\cU)$ such that 
$$\bbE_\mu(\t_0) \leq \frac{\trel(q,\cU)}{q} \leq \exp\Big( \lambda \cdot q^{-2\a} \big( \log(1/q) \big)^4 \Big)$$
for all sufficiently small $q > 0$. Note that the first inequality follows from~\eqref{eq:mean-infection}, and so, by Definition~\ref{def:PC}, it will suffice to prove that
\begin{equation}\label{eq:rooted:aim}
\var(f) \le \exp\Big( \lambda \cdot q^{-2\a} \big( \log(1/q) \big)^4 \Big) \sum_{x \in \bbZ^2} \mu\big( c_x \var_x(f) \big)
\end{equation}
for some $\lambda = \lambda(\cU)$ and all local functions $f$. We will
deduce a bound of the form~\eqref{eq:rooted:aim} starting
from~\eqref{eq:9East} and using Corollary~\ref{cor:critic:spread}. 

\begin{remark}
We are not able to use the unoriented constrained Poincar\'e inequality~\eqref{eq:9FA} in place of the oriented inequality~\eqref{eq:9East} in the proof of~\eqref{eq:rooted:aim} because there exist $\a$-rooted models (the Duarte model~\cite{Duarte} is one such example) with $\b = \infty$ such that, for any choice of the side-lengths $n_1$ and $n_2$ of the blocks $V_i$, and of the good and super-good events $G_2 \subset G_1$ satisfying the condition $(1-p_1)(\log p_2)^2 = o(1)$, the $\cU$-bootstrap process is not guaranteed to be able to infect the block $V_i$ using only that facts that the block $V_{i-\vec e_1}$ is infected and that some nearby blocks $V_j$ are good. For update families with $2\a < \b < \infty$, it \emph{is} possible to apply~\eqref{eq:9FA} for certain choices of $(n_1,n_2,G_1,G_2)$, but the best Poincar\'e constant we are able to obtain in this way is roughly $\exp\big( q^{-\b} \big)$, which is much larger than the one we prove using~\eqref{eq:9East}. 
\end{remark}

\subsubsection{The geometric setting and the good and super-good events}


Recall the construction and notation described in Sections~\ref{sec:setting} and~\ref{sec:critic:spread}; in particular, recall that $V = R \cap \bbZ^2$, where $R$ is a rectangle in the rotated coordinates $(v,v^\perp)$, and $u = -v$ is the midpoint of an open semicircle $C \subset S^1$ in which every stable direction has difficulty at most~$\a$. As in Section~\ref{sec:strategy}, when drawing figures we will think of $u$ as pointing to the left. We will choose the parameters $n_1$ and $n_2$ (which determine the side-lengths of $R$) depending on $q$; to be precise, set
$$n_1 = \big\lfloor q^{-{{3}}\kappa} \big\rfloor \qquad \text{and} \qquad n_2 = \big\lfloor \kappa^4 q^{-\a} \log (1/q) \big\rfloor,$$
where $\kappa = \kappa(\cU)$ is a sufficiently large constant.


In order to define the good and super-good events $G_1$ and $G_2$, we need to define some structures which we call \emph{$\kappa$-stairs}, which will provide us with a way of transporting infection `vertically'. Let us call the set of points of $V$ lying on the same line parallel to $u$ (resp. $u^\perp$) a \emph{row} (resp. \emph{column}) of $V$, and let us order the rows from bottom to top and the columns from left to right. 
Let $a$ and $b$ be (respectively) the number of rows and columns of $V$, and observe that, since $v$ is a rational direction, we have $a = \Theta(n_2)$ and $b = \Theta(n_1)$, where the implicit constants depend only on the update family $\cU$. We will say that a set of vertices is an \emph{interval of $V$} if it is the intersection with {{$V$}} of a line segment in~$\bbR^2$. Recall that $\kappa = \kappa(\cU) > 0$ was fixed above. 

\begin{definition} 
\label{def:stair} 
We say that a collection $\cM = \big\{ M^{(1)}, \ldots, M^{(a)} \big\}$ of disjoint intervals of $V$ of size $2\kappa$ forms an \emph{upward $\kappa$-stair} with
steps $M^{(1)},M^{(2)},\ldots$ if: 
 \begin{enumerate}[(i)]
 \item for each $i \in [a],$ the $i^{th}$-step $M^{(i)}$ belongs to the $i^{th}$-row of $V$;
\item the $i^{th}$-step is ``to the left'' of the $j^{th}$-step if
  $i<j$. More precisely, let $\big( M_\ell^{(i)}, M_r^{(i)} \big)$ be the abscissa (in the $(v,v^\perp)$-frame) of the leftmost and rightmost elements (respectively) of the $i^{th}$-step. Then $M_r^{(i)} < M_\ell^{(j)}$  whenever $i < j$.
\end{enumerate}
\end{definition}

We refer the reader to Figure \ref{fig:strategy} for a picture of an upward $\kappa$-stair.

We are now ready to define the good and super-good events. Let us say that a quasi-stable half-ring $\cR$ \emph{fits in} the block $V_i$ if the top and bottom sides of $\cR$ are contained in the top and bottom sides of {{$R_i$}}, and note that this determines the length $\ell$ of $\cR$, which moreover satisfies $\ell \geq n_2 / m$ (see Definition~\ref{def:half-ring}). Let $(v_1,\dots, v_m)$ be the quasi-stable directions in $C$ (see Definition~\ref{def:quasi-stable}), and recall the definitions of a $v$-strip $S_{v}$ (see Definition~\ref{def:strips}) and of a helping set $Z$ for $S_{v}$ (see immediately after Lemma~\ref{lem:strip}). Note that Assumption~\ref{easy} implies that, for any $j \in [m]$ and $v_j$-strip $S_{v_j}$, we may choose the voracious set $Z_{v_j}$ so that the helping sets for $S_{v_j}$ are subsets of $\partial^{\rm ext}(S_{v_j})$.

\begin{definition}[Good and super-good events]
\label{def:goodsets} \
\begin{enumerate}
\item The block $V_i = R_i \cap \bbZ^2$ satisfies the \emph{good} event $G_1$ iff:  
  \begin{enumerate}[(a)]
  \item for each quasi-stable direction $v \in C$ and every $v$-strip $S$
    such that the length of the segment $\partial^{\rm ext}(S) \cap R_i$ is at least $n_2/(4m)$, there
    exists an empty helping set $Z \subset {{\partial^{\rm ext}(S) \cap V_i}}$ for~$S$; 
\item there exists an empty upward {{$\kappa$}}-stair within the leftmost quarter of $V_i$. 
  \end{enumerate}
\item The block $V_i$ satisfies the \emph{super-good} event $G_2$ iff it satisfies the good event $G_1$, and moreover there exists an empty quasi-stable half-ring $\cR$ of width $\kappa$, 
that fits in $V_i$ and is entirely contained in the rightmost quarter of $R_i$.
\end{enumerate}
\end{definition}

Next we prove that the hypothesis for the part (B) of Theorem \ref{thm:CPI} holds in the above setting if $\kappa$ is sufficiently large.

\begin{lemma}
\label{lem:p1p2}
Let $p_1:=\hat\mu(G_1)$ and $p_2:=\hat\mu(G_2)$. There exists a constant $\kappa_0(\cU) > 0$ such that, for any $\kappa > \kappa_0(\cU)$,
\[
\lim_{q \to 0} \big( 1 - p_1 \big) \big( \log(1/p_2) \big)^2 = 0
\]
\end{lemma}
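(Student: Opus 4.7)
The plan is to bound $1-p_1$ from above and $\log(1/p_2)$ from above separately, and then verify that their product vanishes as $q\to 0$ provided $\kappa$ is chosen large. The parameter choices $n_1=\lfloor q^{-3\kappa}\rfloor$ and $n_2=\lfloor\kappa^4 q^{-\alpha}\log(1/q)\rfloor$ are tuned so that the per-strip failure probability $q^{\Omega(\kappa^4)}$ coming from condition~(a) of the good event dominates the polynomial-in-$q^{-1}$ cost of a union bound over the $q^{-O(\kappa)}$ possible $v$-strips inside $R_i$.

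\textbf{Lower bound on $p_2$.} First I would fix a single quasi-stable half-ring $\cR^*$ of width $\kappa$ that fits in $V_i$ and lies entirely in the rightmost quarter of $R_i$. As the union of $m=O(1)$ strips of width $\kappa$ and total length $\Theta(n_2)$, $\cR^*$ contains $O(\kappa n_2)$ lattice sites, so the event that $\cR^*$ is empty has probability at least $q^{C\kappa n_2}$ for some $C=C(\cU)$. Since the regions relevant to $G_1$ are essentially disjoint from $\cR^*$ (the stair lives in the leftmost quarter, and additional empty sites only make helping sets more likely in condition~(a)), I obtain $p_2\geq q^{C\kappa n_2}(1-o(1))$, whence
$$\log(1/p_2)\,=\,O\big(\kappa n_2\log(1/q)\big)\,=\,O\big(\kappa^5 q^{-\alpha}(\log(1/q))^2\big).$$

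\textbf{Upper bound on $1-p_1$.} I would estimate the failure probabilities of (a) and (b) separately. For (a), fix a quasi-stable direction $v\in C$ and a $v$-strip $S$ with $|\partial^{\rm ext}(S)\cap R_i|\geq n_2/(4m)$. Under Assumption~\ref{easy}, a helping set consists of $r$ translates of $Z_v\subset\ell_v$ lying inside $\partial^{\rm ext}(S)$, with shifts that may be chosen independently. Partitioning the boundary segment into $\Omega(n_2)$ disjoint candidate translates of $Z_v$, the probability that a given component of the helping set cannot be realised is at most
$$(1-q^\alpha)^{\Omega(n_2)}\,\leq\,\exp\big(-\Omega(q^\alpha n_2)\big)\,=\,q^{\Omega(\kappa^4)};$$
a union bound over the $r$ components, the $q^{-O(\kappa)}$ possible $v$-strips in $R_i$, and the finitely many directions $v$ then gives $\hat\mu\big((a)^c\big)\leq q^{\Omega(\kappa^4)}$ for $\kappa$ sufficiently large. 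For (b), I would partition the leftmost quarter of $R_i$ into roughly $n_2$ vertical sub-rectangles of width $\sim n_1/n_2$, and for each row $i$ search for an empty interval of length $2\kappa$ inside the $i$-th sub-rectangle. There are $\Omega(q^{-(\kappa-\alpha)}/\log(1/q))$ disjoint such candidates, each empty with probability $q^{2\kappa}$, so the failure probability per row is $\exp(-q^{-\Omega(1)})$, and summing over the $n_2$ rows yields $\hat\mu\big((b)^c\big)$ super-polynomially small in $q$. Combining these bounds gives $1-p_1\leq q^{\Omega(\kappa^4)}$.

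Putting the two estimates together,
$$(1-p_1)\big(\log(1/p_2)\big)^2\,\leq\,q^{\Omega(\kappa^4)-2\alpha}(\log(1/q))^4\kappa^{10},$$
which tends to zero as $q\to 0$ provided $\kappa>\kappa_0(\cU)$ is chosen so that $\Omega(\kappa^4)>2\alpha$. The main technical delicacy I anticipate is in step~(a), namely verifying that the total number of $v$-strips in $R_i$ requiring attention is polynomial in $q^{-1}$ with degree only linear in $\kappa$, so that it is comfortably absorbed by the per-strip gain $q^{\Omega(\kappa^4)}$. The cushion afforded by the choice of $\kappa^4$ in $n_2$ (as against only $3\kappa$ in $n_1$) is what makes the argument go through; shrinking $n_2$ would make (a) fail too often, and enlarging $n_1$ would pay too much in the union bound.
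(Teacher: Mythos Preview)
Your proposal is correct and follows essentially the same approach as the paper: you bound the per-strip failure probability in condition~(a) by $(1-q^\alpha)^{\Omega(n_2)}=q^{\Omega(\kappa^4)}$ and absorb the $q^{-O(\kappa)}$ union bound, you handle condition~(b) by searching for empty $2\kappa$-intervals in $\Theta(n_1/n_2)$-wide sub-rectangles, and you lower-bound $p_2$ via the cost of a single empty half-ring combined with the positive association (FKG) of decreasing events. The paper records slightly different numerical exponents (e.g.\ $q^{\kappa^3}$ per strip and $O(n_1^2n_2^2)$ for the strip count), but the structure and ingredients are identical.
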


\begin{proof}
First, let's bound the probability that there is no empty helping set $Z \subset V_i$ for a given $v$-strip $S$ (where $v$ is a quasi-stable direction in $C$) such that $\partial^{\rm ext}(S) \cap R_i$ has length at least $n_2/(4m)$. Observe {{that we}} can choose $\Omega(n_2)$ potential values for each $k_j$ in~\eqref{def:helping:set} such that the corresponding sets $Z_v + a_j + k_j b$ are pairwise disjoint subsets of $\partial^{\rm ext} (S) \cap V_i$ (using Assumption~\ref{easy}), and that each such translate of $Z_v$ is empty with probability $q^\a$. (Here the implicit constant depends only on $\cU$.) The probability that $S$ has no empty helping set is therefore at most
$$O\Big( \big( 1 - q^\a \big)^{\Omega(n_2)} \Big) \leq q^{\kappa^3}$$
if $\kappa$ is sufficiently large and $q \ll 1$. There are at most $O(n_1^2 n_2^2)$ choices for the quasi-stable direction $v \in C$ and for the intersection of the $v$-strip $S$ with $V_i$. {{Thus, by the union bound, part~(a) of the definition of $G_1$ holds with probability at least $1 - q^{\kappa^2}$. 

To bound the probability of part~(b), observe that an interval of $V$ of size $\Theta(n_1/n_2)$ contains an empty interval of $V$ of size $2\kappa$ with probability at least
$$1 - \big(1 - q^{2\kappa} \big)^{\Theta(n_1/n_2)} \geq 1 - \exp\big( - q^{-\kappa + 2\alpha} \big),$$
if $q \ll 1$, and therefore the probability that $V$ contains an empty upward $\kappa$-stair is (by the union bound) at least
\begin{equation}
  \label{eq:or1}
1 - O(n_2) \exp\big( - q^{-\kappa + 2\alpha} \big) \geq 1 - q^{\kappa^2}
\end{equation}
 if $\kappa$ is sufficiently large and $q\ll 1$.}} 
It follows that 
$$1 - p_1 \, = \, 1 - \hat\mu(G_1) \, \leq \, 2 \cdot q^{\kappa^2}.$$
Moreover, the probability that there exists an empty quasi-stable half-ring $\cR$ of width $\kappa$ that fits in $V_i$ is at least $q^{O(n_2)}$, so (by the FKG inequality) we have 
$$\log(1 / p_2) \, \leq \, O(n_2) \log(1/q) \leq O\Big( q^{-\a} \big( \log (1/q) \big)^2 \Big),$$
where the implicit constant is independent of $q$. It follows that, if $\kappa$ is sufficiently large, then $\big( 1 - p_1 \big) \big( \log(1/p_2) \big)^2 \to 0$ as $q \to 0$, as required.
\end{proof}

Let us fix, from now on, the constant $\kappa$ to be sufficiently large so that Lemma~\ref{lem:p1p2} applies. In particular, by Theorem~\ref{thm:CPI}, the constrained Poincar\'e inequality~\eqref{eq:9East} holds for any local function $f$, i.e.,
\begin{align}
  \label{eq:9East:bis}
& \var(f) \leq \vec T(p_2) \bigg( \sum_{i\in \bbZ^2}\mu\left(\id_{\{\o_{i+\vec e_2} \in G_2\}}\id_{\{\o_{j}\in G_1\, \forall j\in \bbL^+(i)\}} \var_{V_i}(f)\right)\nonumber\\
& \hspace{3.5cm} + \sum_{i\in \bbZ^2}\mu\left(\id_{\{\o_{i+\vec e_1}\in G_2\}}\id_{\{\o_{i-\vec e_1}\in G_1\}} \var_{V_i}\big( f \tc G_1 \big) \right)\bigg),  
\end{align}
with 
\[
\vec T(p_2) \, = \, e^{O(\log(p_2)^2)} \, = \, \exp\Big( O\big( q^{-2\a} \log(1/q)^4 \big) \Big). 
\]
As in the supercritical setting (see Section~\ref{sec:supercritical}), our strategy will be to bound each of the sums in the r.h.s.~of~\eqref{eq:9East:bis} from above in terms of the Dirichlet form $\cD(f)$ of our  KCM. To do so, it will suffice to bound from above, for a fixed (and arbitrary) local function~$f$, the following two generic terms:
\[
I_1(i):= \mu\left(\id_{\{\o_{i+\vec e_1}\in
    G_2\}}\id_{\{\o_{i-\vec e_1}\in G_1\}}
\var_{V_i}\big( f \tc G_1 \big) \right),
\] 
and
\[
I_2(i):=\mu\left(\id_{\{\o_{i+\vec e_2}\in
    G_2\}}\id_{\{\o_j\in G_1\, \forall j\in \bbL^+(i)\}} \var_{V_i}(f) \right),
\]
see Figure \ref{fig:setting}. Using translation invariance it suffices to consider only the case $i=(0,0)$, so let us set $I_1 \equiv I_1((0,0)$ and $I_2 \equiv I_2((0,0))$.

\begin{figure}[ht]
\centering
\begin{tikzpicture}[scale=0.3]
\node at (-15,1) {$I_1:$};
\node at (-15,-3) {$I_2:$};
\begin{scope}
\draw [thick, fill=lightgray] (0,0) rectangle (12,2);
\draw [thick,  xshift=12.3cm] (0,0) rectangle (12,2);
\draw [thick,  xshift=-12.3cm] (0,0) rectangle (12,2);
\node [scale=0.8] at (6,1) {$V:\text{G}$};
\node [scale=0.8] at (18.5,1) {$V_{(1,0)}:\text{ SG}$};
\node [scale=0.8] at (-6,1) {$V_{(-1,0)}:\text{ G}$};
\end{scope}
\begin{scope}[shift={(0,-6)}]
\draw [thick,fill=lightgray] (0,0) rectangle (12,2);
\draw [thick,  xshift=12.3cm] (0,0) rectangle (12,2);
\draw [thick,xshift=-6.2cm,yshift=2.3cm] (0,0) rectangle
(12,2);
\node [scale=0.8] at (12.5-12.3,3.3) {$V_{(-1,1)}:\text{ G}$};
\draw [thick,xshift=6cm,yshift=2.3cm] (0,0) rectangle
(12,2);
\node [scale=0.8] at (6,1) {$V$};
\node [scale=0.8] at (18.5,1) {$V_{(1,0)}:\text{ G}$};
\node [scale=0.8] at (12.5,3.3) {$V_{(0,1)}:\text{ SG}$};
\end{scope}
\end{tikzpicture}
\caption{In $I_1$ the block $V\equiv V_{(0,0)}$ is conditioned to be good (G), while the blocks $V_{(-1,0)}$ and $V_{(1,0)}$ are good and super-good (SG) respectively. Recall that $\bbL^+((0,0)) = \big\{ (1,0), (-1,1) \big\}$, so in $I_2$ the 
  blocks $V_{(1,0)}$ and $V_{(-1,1)}$ are good, the block $V_{(0,1)}$
  is super-good, and $V$ is unconditioned.}
\label{fig:setting}
\end{figure}

Define $W_1= V_{(0,0)} \cup V_{(-1,0)} \cup V_{(1,0)}$ and $W_2 = V_{(0,0)} \cup V_{(-1,0)} \cup V_{(1,0)} \cup V_{(-1,1)} \cup V_{(0,1)}$. We will prove the following upper bounds on $I_1$ and $I_2$.

\begin{proposition}
\label{prop:I1:I2}
For each $j \in \{1,2\}$, {{there exists a $O(1)$-neighbourhood $\hat W_j$ of $W_j$ such that}}
\begin{align*}
  I_j \, \leq \exp\Big( O\big( q^{-\a} \log(1/q)^{{3}} \big) \Big) \sum_{x \in \hat W_j} \mu\big(c_x
  \var_x(f)\big).
\end{align*}
\end{proposition}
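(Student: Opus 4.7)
The plan is to prove both bounds by simulating the resampling of $V_{(0,0)}$ via a sequence of legal single-site flips of the KCM, using the empty quasi-stable half-ring provided by the super-good event, and controlling the cost of this simulation via a generalised East chain argument that avoids the prohibitive $q^{-|W_j|}$ scaling that a direct application of Lemma~\ref{lem:var0} would produce.

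\emph{Geometric step.} On the event appearing in the indicator of $I_j$, the super-good block contains an empty quasi-stable half-ring, and every good block contains both an empty $\kappa$-stair and, for each quasi-stable $v\in C$ and every sufficiently long $v$-strip intersecting the block, an empty helping set on its external boundary. By iterating Corollary~\ref{cor:critic:spread}, the empty half-ring can be advanced in direction $u$ through any chain of good blocks. For $I_1$ this propagates the half-ring directly from $V_{(1,0)}$ leftward across $V_{(0,0)}$. For $I_2$, we first advance the half-ring from $V_{(0,1)}$ leftward through $V_{(-1,1)}$, producing an empty strip directly above $V_{(0,0)}$; combined with the empty $\kappa$-stair available in the neighbouring good block $V_{(1,0)}$ (and revealed internal structure of $V_{(0,0)}$), this provides the seed needed to introduce a half-ring entering $V_{(0,0)}$ from its top-right corner, after which we sweep as in $I_1$.

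\emph{East chain decomposition.} To pay only $\exp\!\big(O(q^{-\a}\log^3(1/q))\big)$ for the sweep, we organise it as a generalised East chain in the sense of Definition~\ref{def:gen:East}: the sites of the chain correspond to the $O(n_1/\kappa) = q^{-O(\kappa)}$ consecutive positions of the half-ring as it sweeps $V_{(0,0)}$; the ``state'' at each site is the local configuration on a width-$\kappa$ slice of that position; and the constraining event $S^g_x$ is the presence of an empty half-ring in the next slice, so that the East constraint matches exactly the hypothesis required by Corollary~\ref{cor:critic:spread} to advance by one step. Since the half-ring has $O(\kappa n_2)$ sites we have $\nu_x(S^g_x) \geq q^{O(\kappa n_2)}$, hence $\log(1/\nu_x(S^g_x)) = O(q^{-\a}\log^2(1/q))$; and $\log(n_1/\kappa) = O(\log(1/q))$. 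Plugging these into Proposition~\ref{lem:gen-Poincare} together with the East scaling~\eqref{eq:scaling} gives a Poincar\'e constant
$$T_{\text{East}} \, \le \, \nu_x(S^g_x)^{-O(\log(1/q))} \, = \, \exp\!\big(O(q^{-\a}\log^3(1/q))\big),$$
exactly the factor claimed in the proposition.

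\emph{Per-step simulation and conclusion.} Each step of the East chain, namely the resampling of one width-$\kappa$ slice under the constraint that the next slice contains an empty half-ring, can then be reduced to legal single-site flips of the KCM via the same bootstrap argument as in Lemma~\ref{lem:var0}, at an additional multiplicative cost $q^{-O(\kappa n_2)} = \exp(O(q^{-\a}\log^2(1/q)))$ which is absorbed by the East-chain factor. Summing over all slices, and reversing the sweep after the resampling of $V_{(0,0)}$ has been carried out to restore the original configuration outside $V_{(0,0)}$, yields the claimed inequality with $\hat W_j$ any $O(1)$-neighbourhood of $W_j$ large enough to contain the supports of all simulated flips. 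The main obstacle I foresee is the correct geometric setup, especially for $I_2$, where the transition from horizontal propagation in the row above $V_{(0,0)}$ to a half-ring entering $V_{(0,0)}$ requires a delicate ``corner'' construction involving both the $\kappa$-stair and the previously-emptied strip in $V_{(-1,1)}$; a secondary difficulty is the careful bookkeeping needed to match the intermediate East-chain states to the hypothesis of Corollary~\ref{cor:critic:spread} at every step of the sweep, and to verify the cost estimate for each such step.
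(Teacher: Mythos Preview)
Your overall strategy---decompose the sweep of the half-ring as a generalised East chain on $O(n_1)$ slices, with constraining event ``next slice contains an empty half-ring'', then reduce each slice-resampling to legal single-site flips via Lemma~\ref{lem:var0}---is exactly the paper's approach, and your cost estimates match theirs.

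Two points deserve attention. First, for $I_2$ the paper does \emph{not} use a two-phase argument (sweep the upper row, then separately seed a half-ring in $V_{(0,0)}$ via a corner construction). Instead it defines a \emph{single} family of fibers, all living in one East chain: starting from $\cR\subset V_{(0,1)}$, each fiber is a translate of the previous one by $\kappa u$, but whenever the bottom $v_1$-strip of the fiber meets a step of the $\kappa$-stair in $V_{(1,0)}$, that strip is \emph{elongated} downward so as to absorb the step (see Remark~\ref{rem:helping-sets}). The fibers thus gradually grow as they descend the stair until their bottom reaches the bottom row of $V_{(0,0)}$, after which further translates cover all of $V_{(0,0)}$. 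This avoids the delicate corner construction you anticipate: the single East chain handles everything, and the fact that the constraining event ``$F_{i+1}$ empty'' suffices to infect $F_i$ follows directly from Corollary~\ref{cor:critic:spread} applied to elongated half-rings.

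Second, you gloss over a step that the paper isolates carefully: passing from $\var_{V}(f\mid G_1)$ (or $\var_V(f)$) to $\var_\nu(f)$, where $\nu$ is the product over fibers of $\mu$ \emph{conditioned on the helping events} $H_i$. This is not automatic, because the original conditioning ($V$ good) and the desired conditioning (each fiber satisfies $H_i$) do not coincide. The paper handles this by first conditioning on the location of the rightmost empty half-ring $\cR$ (and, for $I_2$, on the choice of stair $\cM$), then using the inequality $\var(X)\le\bbE[(X-a)^2]$ with $a=\mu_\Lambda(f\mid\bigcap H_i)$, together with the implication ``all blocks good $\Rightarrow$ all $H_i$ hold''. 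Without this conditioning step the fibers would not carry independent product measure, and Proposition~\ref{lem:gen-Poincare} could not be applied.
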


Observe that, combining Proposition~\ref{prop:I1:I2} with~\eqref{eq:9East:bis}, and noting that $|\hat W_j| = q^{-O(1)}$, we immediately obtain a final Poincar\'e inequality of the form~\eqref{eq:rooted:aim}, i.e.,
\[
\var(f) \leq \exp\Big( O\big( q^{-2\a} \log(1/q)^4 \big) \Big) \sum_{x \in \bbZ^2} \mu\big( c_x \var_x(f) \big),
\]
as required. It will therefore suffice to prove Proposition~\ref{prop:I1:I2}.


\subsubsection{{The core of the proof of Proposition~\ref{prop:I1:I2}}}\label{sec:core}

Before giving the full technical details of the proof of the proposition, we first explain the high-level idea we wish to exploit. Fix~$j \in \{1,2\}$, set $W := W_j$, and fix $\o\in \O$ such that the restriction of $\o$ to $W$ satisfies the requirement of the good and super-good environment of the blocks (see~Figure~\ref{fig:setting}). The key idea is to cover the block $V = V_{(0,0)}$ with a collection of pairwise disjoint ``fibers'' $\hat F_1,\ldots, \hat F_{N+1}$, each of which is a quasi-stable half-ring, for some $N \leq |V|$ depending on~$\o$. For each fiber $\hat F_i$, the set $F_i := \hat F_i \cap \bbZ^2$ is a subset of $W$ of cardinality $O(n_2)$ with the following key properties (which we will define precisely later): 
\begin{enumerate}[(a)]
\item the fiber $F_{N+1}$ is empty;
\item in each fiber $F_i$ a certain ``helping'' event $H_i$, depending only on the restriction of $\o$ to $F_i$, and implied by our assumption on the goodness\footnote{It is worth emphasizing here that $H_i$ only requires the blocks to be good, rather than super-good, and therefore holds with high probability.} of the blocks in $W$, occurs; 
\item {{the helping event $H_i$ has the following property: the $\cU$-bootstrap process restricted to a $O(1)$-neighbourhood of the set  $F_i \cup F_{i+1}$ is able to infect $F_i$ for any $\o$ such that $F_{i+1}$ is empty and $H_{i}$ occurs.}}  
\end{enumerate}
To be concrete, let us consider the term $I_1$. In this case we will take $F_{N+1}$ to be $\cR \cap \bbZ^2$, where $\cR$ is the rightmost empty quasi-stable half-ring of width $\kappa$ that fits in $V_{(1,0)}$, which exists by our assumption that $V_{(1,0)}$ is a super-good block. The other fibers $F_1,\ldots, F_{N}$ will be suitable disjoint translates of $F_{N+1}$ in the $u$-direction, satisfying $V \subset \L := \bigcup_{i=1}^N F_i$. The helping event $H_i$ will require the presence in $F_i$ of suitable helping sets for each quasi-stable direction{; we remark that the key requirement that $H_i$ depends only on the restriction of $\o$ to $F_i$ is a consequence of Assumption~\ref{easy}. Finally, the third condition (c) above will follow from Corollary \ref{cor:critic:spread}. A similar construction will be used for the term $I_2$, but the fibers will be slightly more complicated, see Figure~\ref{fig:strategy}.

 
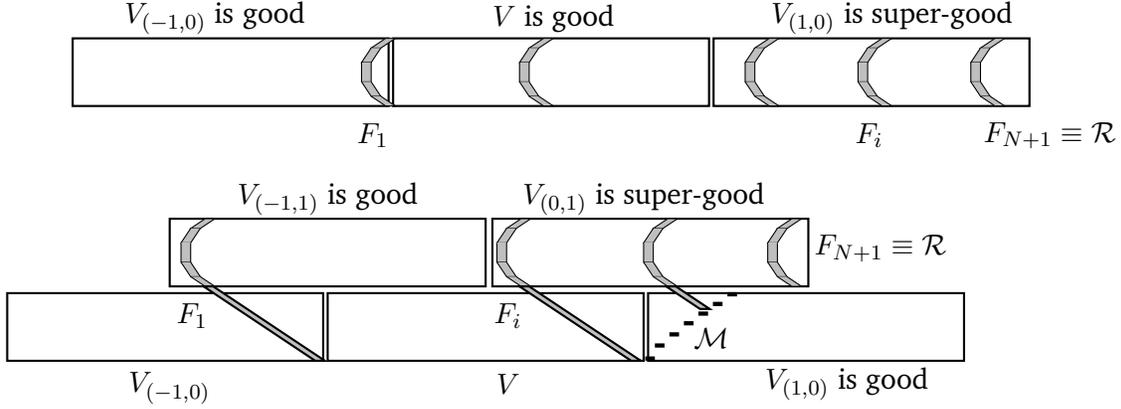
\begin{figure}[ht]
\centering
\begin{tikzpicture}[scale=0.3]
\begin{scope}[shift={(0,0)}]
\begin{scope}
\draw [thick,xshift=-7.1cm,yshift=3.3cm] (0,0) rectangle
(14,3);
\draw [thick,xshift=7.1cm,yshift=3.3cm] (0,0) rectangle
(14,3);
\draw [thick,xshift=-21.3cm,yshift=3.3cm] (0,0) rectangle
(14,3);
\end{scope}
\begin{scope}
\foreach \j in {0,0.5,1,2,2.7}{ 
\begin{scope}[shift={(20-\j*10,6.3)},rotate around={180:(0,0)}, scale=3/14]
\draw  [fill=lightgray] (0,0)--(3,2)--(5,2)--(2,0)--(0,0);  
\draw  [fill=lightgray] (3,2)--(5,5)--(7,5)--(5,2);
\draw [fill=lightgray] (5,5)--(5,9)--(7,9)--(7,5);
\draw [fill=lightgray] (5,9)--(3,12)--(5,12)--(7,9);
\draw [fill=lightgray] (3,12)--(0,14)--(2,14)--(5,12);
\end{scope}
}
\end{scope}

\begin{scope}
    \node at (0,7.2) {$V$ is good};
  \node at (15,7.2) {$V_{(1,0)}$ is super-good};
\node at (-15,7.2) {$V_{(-1,0)}$ is good};
  \node at (22,2) {$F_{N+1}\equiv \cR$};
\node at (14,2) {$F_i$};
\node at (-8,2) {$F_1$};
\end{scope}
\end{scope}


\begin{scope}[shift={(-10,-8)}]
  \node at (24.5,5) {$F_{N+1}\equiv \cR$};
\node at (8,2) {$F_i$};
\node at (-6,2) {$F_1$};
\node at (17,1) {$\mathcal M$};
\end{scope}

\begin{scope}[shift={(-10,-8)}]
  \begin{scope}
\draw [thick] (0,0) rectangle (14,3);
\draw [thick,  xshift=14.2cm] (0,0) rectangle (14,3);
\draw [thick, xshift=-14.2cm] (0,0) rectangle (14,3);
\draw [thick,xshift=-7cm,yshift=3.3cm] (0,0) rectangle (14,3);
\draw [thick,xshift=7.3cm,yshift=3.3cm] (0,0) rectangle (14,3);
\end{scope}

\begin{scope}[shift={(21,6.3)},rotate around={180:(0,0)}, scale=3/14]
\draw  [fill=lightgray] (0,0)--(3,2)--(5,2)--(2,0)--(0,0);  
\draw  [fill=lightgray] (3,2)--(5,5)--(7,5)--(5,2);
\draw [fill=lightgray] (5,5)--(5,9)--(7,9)--(7,5);
\draw [fill=lightgray] (5,9)--(3,12)--(5,12)--(7,9);
\draw [fill=lightgray] (3,12)--(0,14)--(2,14)--(5,12);
\end{scope}
\begin{scope}[shift={(15.5,6.3)},rotate around={180:(0,0)}, scale=3/14]
\draw  [fill=lightgray] (0,0)--(3,2)--(5,2)--(2,0)--(0,0);  
\draw  [fill=lightgray] (3,2)--(5,5)--(7,5)--(5,2);
\draw [fill=lightgray] (5,5)--(5,9)--(7,9)--(7,5);
\draw [fill=lightgray] (5,9)--(3,12)--(5,12)--(7,9);
\draw [fill=lightgray] (3,12)--(0,14)--(2,14)--(5,12);
\end{scope}
\begin{scope}[shift={(15,0.3)},scale=3/14]
\draw [thick,fill=lightgray] (0,14)--(2,14)--(2+3*1*14/6,14-1*14/3)--((3*1*14/6,14-1*14/3)--cycle;
\end{scope}

\begin{scope}[shift={(9,6.3)},rotate around={180:(0,0)}, scale=3/14]
\draw  [fill=lightgray] (0,0)--(3,2)--(5,2)--(2,0)--(0,0);  
\draw  [fill=lightgray] (3,2)--(5,5)--(7,5)--(5,2);
\draw [fill=lightgray] (5,5)--(5,9)--(7,9)--(7,5);
\draw [fill=lightgray] (5,9)--(3,12)--(5,12)--(7,9);
\draw [fill=lightgray] (3,12)--(0,14)--(2,14)--(5,12);
\end{scope}
\begin{scope}[shift={(8.5,0.3)},scale=3/14]
\draw [thick,fill=lightgray] (0,14)--(2,14)--(2+3*3.3*14/6,14-3.3*14/3)--((3*3.3*14/6,14-3.3*14/3)--cycle;
\end{scope}

\begin{scope}[shift={(-11,0)}]
  \begin{scope}[shift={(6,6.3)},rotate around={180:(0,0)}, scale=3/14]
\draw  [fill=lightgray] (0,0)--(3,2)--(5,2)--(2,0)--(0,0);  
\draw  [fill=lightgray] (3,2)--(5,5)--(7,5)--(5,2);
\draw [fill=lightgray] (5,5)--(5,9)--(7,9)--(7,5);
\draw [fill=lightgray] (5,9)--(3,12)--(5,12)--(7,9);
\draw [fill=lightgray] (3,12)--(0,14)--(2,14)--(5,12);
\end{scope}

\begin{scope}[shift={(5.5,0.3)},scale=3/14]
\draw [thick,fill=lightgray] (0,14)--(2,14)--(2+3*3.3*14/6,14-3.3*14/3)--((3*3.3*14/6,14-3.3*14/3)--cycle;
\end{scope}
\end{scope}

\begin{scope}[shift={(13,1.6)},scale=3/14]
\draw [ultra thick] (22,6.3)--(24,6.3);
\draw [ultra thick] (19,4.3)--(21,4.3);
\draw [ultra thick] (16,2.3)--(18,2.3);
\draw [ultra thick] (13,0.3)--(15,0.3);
\draw [ultra thick] (10,-2)--(12,-2);
\draw [ultra thick] (7,-4.3)--(9,-4.3);
\draw [ultra thick] (5,-7)--(7,-7);
\end{scope}

\begin{scope}
  \node at (8,-1) {$V$};
  \node at (-7,-1.2) {$V_{(-1,0)}$ };
  \node at (23,-1) {$V_{(1,0)}$ is good};
  \node at (0,7.2) {$V_{(-1,1)}$ is good};
  \node at (14,7.2) {$V_{(0,1)}$ is super-good};
\end{scope}

\end{scope}
\end{tikzpicture}
\caption{The top picture shows the local neighbourhood $W_1$ of
  the block $V = V_{(0,0)}$; in this case the fibers are simply the disjoint translates of the
rightmost empty quasi-stable half-ring $\cR$ in the last quarter of $V_{(1,0)}$. The bottom picture shows the local neighbourhood $W_2$; in this case the fibers are not all equal: they grow as they `descend' the steps of the upward $\kappa$-stair $\cM$ (the little horizontal intervals). Each fiber becomes an elongated version of the rightmost empty half-ring $\cR$.}
\label{fig:strategy}
\end{figure}

Let us write $\nu_i$ for the Bernoulli($p$) product measure on $S_i = \{0,1\}^{F_i}$ conditioned on the event $H_i$. The main step in the proof is the following bound on $I_j$ for $j \in \{1,2\}$:
\begin{equation}
  \label{eq:core1}
I_j \, \leq \, \frac{1}{p_1} \cdot \mu\Big(\id_{\{F_{N+1} \text{ is empty}\}} \var_\nu(f) \Big),
\end{equation}
where $\var_{\nu}(\cdot)$ is the variance computed w.r.t.~the product measure $\nu=\otimes_{i=1}^N \nu_i$. Before proving~\eqref{eq:core1} (see Section~\ref{sec:East-like}, below), let us show how to use Proposition~\ref{lem:gen-Poincare} and Lemma \ref{lem:var0} to deduce Proposition~\ref{prop:I1:I2} from it.

\begin{proof}[Proof of Proposition~\ref{prop:I1:I2}, assuming~\eqref{eq:core1}]
Consider the generalized East chain on the space $\otimes_{i=1}^N (S_i,\nu_i)$ with constraining event $S_i^g = \big\{ F_i \text{ is empty} \big\}$ (see Definition~\ref{def:gen:East}). Note that the East constraint for the last fiber $F_N$ is always satisfied because $F_{N+1}$ is empty, and that the parameters $\{q_i\}_{i=1}^N$ of the generalized East process satisfy 
\[
q_i = \nu_i\big( S_i^g \big) \geq \, q^{O(n_2)} = \exp\Big( - O\big( q^{-\a} \log(1/q)^2 \big) \Big).
\]  
Noting that $N \leq |V| = q^{-O(1)}$, it follows from~\eqref{eq:scaling} that 
\begin{equation}\label{eq:losing:log}
T_{\text{\tiny East}}\big( n,\bar \a \big) \leq \exp\Big( O\big( q^{-\a} \log(1/q)^3 \big) \Big).
\end{equation}
Hence, applying Proposition \ref{lem:gen-Poincare} to bound $\var_\nu(f)$ from above, and recalling~\eqref{eq:core1} and that $\L = \bigcup_{i=1}^N F_i$, we obtain
\begin{align}
{I_j} & \, \leq \, \frac{1}{p_1} \cdot \mu\Big(\id_{\{F_{N+1} \text{ is empty}\}}\var_\nu(f) \Big)\nonumber\\
& \, \leq \, {{e^{O( q^{-\a} \log(1/q)^3)}}} \mu\bigg(\id_{\{F_{N+1} \text{ is empty}\}}\sum_{i=1}^N\nu\Big( \id_{\{F_{i+1} \text{ is empty}\}}\var_{\nu_i}(f) \Big) \bigg)\nonumber\\
& \, {{= \, e^{O( q^{-\a} \log(1/q)^3)}}} \mu\bigg(\id_{\{F_{N+1} \text{ is empty}\}}\sum_{i=1}^N \mu_{\L}\Big( \id_{\{F_{i+1} \text{ is empty}\}}\var_{\nu_i}(f) \Big) \bigg),
\label{eq:core1ter}
\end{align}
{{where the final inequality follows from the definition of $\nu_i$, and property~(b) of the fibers, which implies that the event $H_1 \cap \cdots \cap H_N$ has probability at least $p_1^3 = 1 - o(1)$ (since it is implied by the goodness of three blocks).}} 

{{Recall that, by property~(c) of the fibers, $F_i$ is contained in the closure, under the $\cU$-bootstrap process restricted to {{a $O(1)$-neighbourhood $U_i$ of the}} set $F_i\cup F_{i+1}$, of any set of empty sites containing $F_{i+1}$ and for which the event $H_i$ holds. We may therefore apply Lemma~\ref{lem:var0}, with $A := F_{i+1}$, $B := F_i$, $\cE := H_i$ and $U := U_i$, to obtain}}
\begin{gather}
\label{eq:core1bis}
\mu_{{{\L}}}\Big(\id_{\{F_{i+1} \text{ is empty}\}}\var_{\nu_i}(f) \Big)
\leq O(n_2) q^{-O(n_2)}\sum_{x \in {{U_i}}} \mu_{{{\L}}} \big( c_x\var_x(f) \big),
\end{gather}
since $|F_i| = O(n_2)$. 
Inserting~\eqref{eq:core1bis} into~\eqref{eq:core1ter} we obtain
\[
I_j \, \leq \, e^{O( q^{-\a} \log(1/q)^3)} \sum_{x \in {{\hat W_j}}} \mu\big( c_x \var_x(f) \big)
\]
for each $j \in \{1,2\}$, {{and some $O(1)$-neighbourhood $\hat W_j$ of $W_j$,}} as required.
\end{proof}

\begin{remark}\label{rem:whyEast}
{{We remark that our use of the generalized East chain (rather than the
generalised FA-1f chain) in the proof above was necessary (since for $\a$-rooted models Proposition~\ref{prop:critic:spread} can only be used to move infection in the $u$-direction), and also harmless (since in either case the bound we obtain is of the form $\exp\big( \widetilde O(q^{-\a}) \big)$, which is much smaller than $\exp\big( q^{-2\a} \big)$). In the proof for $\b$-unrooted models we will also use the generalized East chain, however, even though in that case we can move infection in both the $u$- and $-u$-directions, and doing so costs us a factor of $\log(1/q)$ in the exponent for models with $\b = \a$. This is because the method we use in this paper does not appear to easily allow us to use the generalised FA-1f chain in this setting.}}
\end{remark}

In order to conclude the proof of the proposition, it remains to
construct in detail the fibers for each case and to prove the basic
inequality~\eqref{eq:core1}.

\subsubsection{Construction of the fibers and the proof of~\eqref{eq:core1}.}
\label{sec:East-like} 

We {{will first define the helping events and prove~\eqref{eq:core1} in the (easier) case $j = 1$. Recall that}}  
\[
I_1 = \mu\left(\id_{\{\o_{\vec e_1} \in G_2\}}\id_{\{\o_{-\vec e_1}\in G_1\}} \var_{V}\big( f \tc G_1 \big) \right),
\] 
{{where $V = V_{(0,0)}$, and that $\o_{\vec e_1} \in G_2$ implies that there exists an empty quasi-stable half-ring $\cR$ of width $\kappa$ that fits in $V_{(1,0)}$ and is entirely contained in the rightmost quarter of $R_{(1,0)}$, and recall that this determines the length $\ell$ of $\cR$, and that $\ell \geq n_2/ m$. By translating $\cR$ slightly (without changing the set $\cR \cap \bbZ^2$) if necessary, we may also assume  that there are no sites of $\bbZ^2$ on the boundary of $\cR$ and in the interior of $R_{(1,0)}$. Let us also choose $\kappa$ so that the vector $\kappa u$ has integer {{coordinates}}. Now, for each such quasi-stable half-ring $\cR$, set 
$$N = N(\cR) := \min\big\{ j : \cR + j \kappa u \subset V_{(-1,0)} \big\}$$
and define $F_i = F_i(\cR) := \hat F_i \cap \bbZ^2$, where 
\[
\hat F_i = \hat F_i(\cR) := \cR + (N + 1 - i) \kappa u,
\]
for each $1 \leq i \leq N + 1$. Note that $V_{(0,0)} \subset \bigcup_{i=1}^N F_i$, and that (by our choice of $\kappa$) there are no sites of $\bbZ^2$ on the boundary of $\hat F_i$ in the interior of $R_{(-1,0)} \cup R_{(0,0)} \cup R_{(1,0)}$.

\begin{definition}\label{def:helping}
For each $\cR$ and $i \in [N]$, let $H_i$ denote the event that for each quasi-stable direction $v \in C$ and every $v$-strip $S$ such that the segment $\partial^{\rm ext}(S) \cap \hat F_i$ has length at least $n_2/(2m)$, there exists an empty helping set $Z \subset F_i$ for~$S$. 
\end{definition}
}}

Notice that in the above definition we do not require {{the $v$-strip $S$}} to be contained in $\hat F_i$. Observe that if the blocks $V_{(-1,0)}$, $V_{(0,0)}$ and $V_{(1,0)}$ are all good, then the event $H_i$ occurs for every $i \in [N]$. Now define $H_{\cR}$ to be the event that $\cR$ is (up to translates preserving the set $\cR \cap \bbZ^2$) the rightmost empty quasi-stable half-ring in $R_{(1,0)}$, and observe that, conditional on $H_{\cR}$, the events $\{H_i\}_{i=1}^N$ are
independent. Moreover, by Corollary~\ref{cor:critic:spread}, {{and since $\kappa$ is sufficiently large,}} if $F_{i+1}$ is empty and $H_{i}$ occurs, then the $\cU$-bootstrap process restricted to {{a $O(1)$-neighbourhood of}} the set $F_i \cup F_{i+1}$ is able to infect $F_i$. The fibers $\{F_i\}_{i=1}^{N+1}$ therefore satisfy conditions (a), (b) and (c) of Section~\ref{sec:core}. Recall that we write $\L = \bigcup_{i=1}^N F_i$. We make the following claim, which implies~\eqref{eq:core1} in the case $j = 1$:
  
\begin{claim}\label{claim:core1}
\begin{equation}
\label{eq:claim1}
I_1 \leq \frac{1}{p_1}  \sum_{\cR} \mu\Big( \id_{H_\cR} \var_{\L}\big( f \;\big|\; H_1 \cap \cdots \cap H_N \big) \Big).
  \end{equation}
\end{claim}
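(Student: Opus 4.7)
The plan is to decompose $I_1$ according to which quasi-stable half-ring $\cR$ plays the role of the rightmost empty half-ring in the rightmost quarter of $R_{(1,0)}$, and then to convert the inner conditional variance $\var_V(f\tc G_1)$ into a conditional variance on the enlarged set $\L=\L(\cR)$. On $\{\o_{\vec e_1}\in G_2\}$ such a half-ring exists and is unique (up to the equivalence in the definition of $H_\cR$), giving the disjoint decomposition
\[
\id_{\{\o_{\vec e_1}\in G_2\}}=\sum_{\cR}\id_{H_\cR}\id_{\{V_{(1,0)}\in G_1\}},
\]
and hence, setting $O(\cR):=\{V_{(-1,0)}\in G_1\}\cap\{V_{(1,0)}\in G_1\}$,
\[
I_1\le \sum_{\cR}\mu\bigl(\id_{H_\cR}\id_{O(\cR)}\var_V(f\tc G_1)\bigr).
\]
Two facts are essential. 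First, $\L=\bigcup_{i\le N}F_i$ lies strictly to the left of $\cR$ in $R_{(1,0)}$, while $H_\cR$ depends only on sites of $V_{(1,0)}$ that are in or to the right of $\cR$; hence $H_\cR$ is $\cF_{\Z^2\setminus\L}$-measurable. Second, on $H_\cR\cap O(\cR)$ the two outer blocks $V_{(\pm1,0)}$ are good, so whenever $\o_V\in G_1$ all three blocks $V_{(-1,0)},V_{(0,0)},V_{(1,0)}$ are good, and by the observation following Definition~\ref{def:helping} every $H_i$ must then hold. Equivalently, for $\o_{\L\setminus V}$ fixed and $\o_{\Z^2\setminus\L}$ in $H_\cR\cap O(\cR)$, the event $G_1$ viewed on $\o_V$ is contained in $A=A(\o_{\L\setminus V}):=\{\o_V:\text{all }H_i\text{ hold}\}$.

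Next I would apply the nested-event variance inequality $\var(X\tc B)\le\frac{\mu(A)}{\mu(B)}\var(X\tc A)$ (valid for $B\subseteq A$, an immediate consequence of the law of total variance), taking $B=G_1$ and the above $A$, with $\mu_V(G_1)=p_1$, to obtain pointwise in $\o_{\L\setminus V},\o_{\Z^2\setminus\L}$ the bound
\[
\var_V(f\tc G_1)\le \frac{\mu_V(A)}{p_1}\var_V(f\tc A).
\]
Write $\widetilde\mu:=\mu_\L(\cdot\tc H_1\cap\dots\cap H_N)$. The $\o_{\L\setminus V}$-marginal of $\widetilde\mu$ has Radon--Nikodym derivative $\mu_V(A)/\mu_\L(H_1\cap\dots\cap H_N)$ with respect to $\mu_{\L\setminus V}$, so the law of total variance applied to $\widetilde\mu$ (splitting $\o_\L=(\o_V,\o_{\L\setminus V})$) gives
\[
\int \mu_V(A)\,\var_V(f\tc A)\,d\mu_{\L\setminus V}\le \mu_\L(H_1\cap\dots\cap H_N)\,\var_\L(f\tc H_1\cap\dots\cap H_N).
\]
Combining the two displays, bounding $\id_{O(\cR)}\le 1$ and $\mu_\L(H_1\cap\dots\cap H_N)\le 1$, and integrating against $\id_{H_\cR}$ (which is $\cF_{\Z^2\setminus\L}$-measurable) yields
\[
\mu\bigl(\id_{H_\cR}\id_{O(\cR)}\var_V(f\tc G_1)\bigr)\le \frac{1}{p_1}\mu\bigl(\id_{H_\cR}\var_\L(f\tc H_1\cap\dots\cap H_N)\bigr),
\]
and summing over $\cR$ proves the claim.

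The hardest point of the argument is arranging matters so that only a single factor $1/p_1$ appears in the final estimate. A naive two-step approach --- first $\var_V(f\tc G_1)\le p_1^{-1}\var_V(f)$ via the nested-event inequality, then bounding $\var_V(f)$ by $\var_\L(f\tc H_1\cap\dots\cap H_N)$ --- would introduce a second factor of order $p_1^{-1}$ from the change of measure between the product Bernoulli $\mu_{\L\setminus V}$ and the $\widetilde\mu$-marginal. The trick above interlaces the two estimates: the factor $\mu_V(A)$ produced by the variance comparison is exactly the Radon--Nikodym density (up to the harmless factor $\mu_\L(H_1\cap\dots\cap H_N)^{-1}\le 1$) needed to pass from $\mu_{\L\setminus V}$ to $\widetilde\mu$ in the total variance inequality, so that the two potential losses collapse into a single $1/p_1$.
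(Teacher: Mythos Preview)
Your proof is correct and follows essentially the same route as the paper's: decompose according to the rightmost empty half-ring $\cR$, use that on the good-block event the inclusion $G_1\subset\{\text{all }H_i\text{ hold}\}$ furnishes a single factor $1/p_1$, and then pass to $\var_\L(f\tc H_1\cap\dots\cap H_N)$. The only difference is cosmetic: the paper fixes the explicit centering constant $a=\mu_\L(f\tc H_1\cap\dots\cap H_N)$ and tracks $(f-a)^2$ through the inequalities, whereas you package the same two steps as the nested-event bound $\var(\cdot\tc B)\le\tfrac{\mu(A)}{\mu(B)}\var(\cdot\tc A)$ followed by the law of total variance for $\widetilde\mu$.
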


{{Note that the sum in the claim is over equivalence classes of quasi-stable half-rings $\cR$, where two half-rings are equivalent if they have the same intersection with $\bbZ^2$.}}

\begin{proof}[Proof of Claim~\ref{claim:core1}]
{{We first claim that 
\begin{equation}\label{claim:core1:step1}
I_1 \leq \frac{1}{p_1} \sum_{\cR} \mu\Big( \id_{H_\cR} \id_{\{\o_{\pm \vec e_1}\in G_1\}}\mu_V\Big( \id_{\{\o_0 \in G_1\}} \big( f - a \big)^2 \Big) \Big),
\end{equation}
where $\o_0\equiv \o_{V_{(0,0)}}$ and, for any $\o \in H_{\cR}$, we set 
\[
a = a\big( \o_{\bbZ^2 \setminus \L} \big) := \mu_\L\big( f \;\big|\; H_1 \cap \cdots \cap H_N \big),
\] 
noting that, on the event $H_{\cR}$, the set $\L$ and the fibers become deterministic. To prove~\eqref{claim:core1:step1} we use Definition~\ref{def:goodsets}, which implies that if $V_{(1,0)}$ is super-good then it is also good, and also that the event $H_{\cR}$ holds for some $\cR$, and the standard inequality $\var(X) \leq \bbE\big[ (X-a)^2 \big]$, which holds for any $a\in \bbR$ and any random variable $X$.

Recalling that if the blocks $V_{(-1,0)}$, $V_{(0,0)}$ and $V_{(1,0)}$ are all good, then the event $H_i$ occurs for every $i \in [N]$, it follows from~\eqref{claim:core1:step1} that}}
\begin{align*}
 I_1 & \, \leq \, \frac{1}{p_1} \sum_{\cR} \mu\Big( \id_{H_\cR} \, \mu_{{{\L}}} \Big( \id_{H_1 \cap \cdots  \cap H_N} \big( f - a \big)^2 \Big) \Big)\\
& \, \leq \, \frac{1}{p_1} \sum_{\cR} \mu\Big( \id_{H_\cR} \var_\L\big( f \;\big|\; H_1 \cap \cdots  \cap H_N \big) \Big),
\end{align*}
where the last inequality follows from our choice of $a$ and the trivial inequality
\[
  \mu_\L\Big( \id_{H_1 \cap \cdots  \cap H_N} \big( f - a \big)^2 \Big) \leq  \mu_\L\Big( \big( f - a \big)^2  \;\big|\; H_1 \cap \cdots  \cap H_N \Big).
\] 
{{This proves the claim, and hence~\eqref{eq:core1} in the case $j = 1$.}}
\end{proof}

We now turn to {{the analysis of}} the term 
$$I_2 = \mu\left(\id_{\{\o_{\vec e_2}\in G_2\}}\id_{\{\o_j\in G_1\, \forall j\in \bbL^+\}}
\var_{V}(f) \right).$$
In this case we need to modify the definition of the fibers $F_i$ in order to take into account
the different local neighbourhood $W_2$ of $V_{(0,0)}$ and the different good and super-good environment in $W_2$ (see Figures~\ref{fig:setting} {{and~\ref{fig:strategy}}}). 

{{First, let us define $H_\cR$ to be the event that $\cR$ is (up to translates preserving the set $\cR \cap \bbZ^2$) the rightmost empty quasi-stable half-ring of width $\kappa$ that fits in $V_{(0,1)}$, and observe that the length $\ell$ of $\cR$ satisfies $\ell \geq n_2/ m$, and that the event $\o_{\vec e_2} \in G_2$ implies that $H_\cR$ holds for some $\cR$ in the rightmost quarter of $R_{(0,1)}$. As before, we may choose $\cR$ so that there are no sites of $\bbZ^2$ on its boundary in the interior of $R_{(0,1)}$.

The fibers $\{F_i\}_{i=1}^{N+1}$ will be similar to those used above to bound $I_1$, but some of the $v_1$-strips (which form the bottom portion of each fiber) will be elongated as the fibers ``descend" the upward $\kappa$-stair in $V_{(1,0)}$, see Figure~\ref{fig:strategy}. (Recall that we call these objects \emph{elongated quasi-stable half-rings}.) To be precise, let us write $L(\cR)$ for the two-way infinite $v_1$-strip of width $\kappa$ that contains the $v_1$-strip of $\cR$, and define
$$N = N(\cR) := \min\bigg\{ j : V_{(0,0)} \subset \bigcup_{i = 1}^j \big( L(\cR) + {{i}} \kappa u \big) \bigg\}.$$
Now,}} recall that $a = \Theta(n_2)$ is the number of rows {{of $V$}}, and recall Definition~\ref{def:stair}. Let {{$\cM = \big\{ M^{(1)}, \ldots, M^{(a)} \big\}$}} be an upward $\kappa$-stair contained in the {{leftmost}} quarter of $V_{(1,0)}$, {{and define the fibers $\hat F_i = \hat F_i(\cR,\cM)$ recursively as follows:
\begin{itemize}
\item[$(a)$] $\hat F_{N+1} := \cR$;
\item[$(b)$] For each $i \in [N]$ set $\hat F'_i := \hat F_{i+1} + \kappa u$. Now define:
\begin{itemize}
\item[$(i)$] $\hat F_i$ to be an elongated version 
of $\hat F'_i$ such that $\big( \hat F_i \setminus \hat F_i' \big) \cap \bbZ^2$ is a subset of a step of $\cM$, if such an elongated quasi-stable half-ring exists;  
\item[$(ii)$] $\hat F_i := \hat F_i'$ otherwise.
\end{itemize}
\end{itemize}
As before, we set $F_i = F_i(\cR,\cM) := \hat F_i \cap \bbZ^2$ for each $1 \leq i \leq N + 1$. {{Let us write $H_\cM$ for the event}} that $\cM$ is the first (in some arbitrary ordering) empty upward {{$\kappa$}}-stair contained in the leftmost quarter of $V_{(1,0)}$. We can now define the helping events.

\begin{definition}\label{def:helping:2}
For each $\cR$ and $\cM$, and each $i \in [N]$, let $H_i$ denote the event that for each quasi-stable direction $v \in C$ and every $v$-strip $S$ such that the segment 
$$\partial^{\rm ext}(S) \cap \hat F_i \cap \big( R_{(-1,1)} \cup R_{(0,1)} \big)$$ 
has length at least $n_2/(2m)$, there exists an empty helping set $Z \subset F_i$ for~$S$.
\end{definition}

Observe that if the blocks $V_{(-1,1)}$ and $V_{(0,1)}$ are both good, then the event $H_i$ occurs for every $i \in [N]$. Moreover, conditional on the event $H_\cR {{ \, \cap \, H_\cM}}$, the events $\{H_i\}_{i=1}^N$ are independent and, by Corollary~\ref{cor:critic:spread} {{(see Remark~\ref{rem:helping-sets})}}, if $F_{i+1}$ is empty and {{the events $H_\cM$ and}} $H_i$ occur, then the $\cU$-bootstrap process restricted to {{a $O(1)$-neighbourhood of}} the set $F_i \cup F_{i+1}$ is able to infect $F_i$. {{It follows that if the event $H_\cR \, \cap \, H_\cM$ occurs, then}} the fibers $\{F_i\}_{i=1}^{N+1}$ satisfy conditions (a), (b) and (c) of Section~\ref{sec:core}. 


We make the following claim, which implies~\eqref{eq:core1} in the case $j = 2$:
  
\begin{claim}\label{claim:core2}
\begin{equation}
\label{eq:claim2}
I_2 \leq \frac{1}{p_1}  \sum_{\cR, \cM} \mu\Big( \id_{H_\cR} {{\id_{H_\cM}}} \var_{\L}\big( f \;\big|\; H_1 \cap \cdots \cap H_N \big) \Big).
  \end{equation}
\end{claim}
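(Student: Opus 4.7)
The plan is to mirror almost verbatim the argument used to establish Claim~\ref{claim:core1}, with the added bookkeeping forced by the fact that the fibers now depend on the pair $(\cR,\cM)$ of random structures, rather than on $\cR$ alone. The two structures will play complementary roles: the super-goodness of $V_{(0,1)}$ (implied by $\o_{\vec e_2}\in G_2$) guarantees the existence of an empty quasi-stable half-ring $\cR$ of width $\kappa$ inside the rightmost quarter of $R_{(0,1)}$, which provides the first fiber $F_{N+1}$; and the goodness of $V_{(1,0)}$ (implied by $\o_{\vec e_1}\in G_1$, since $G_2\subset G_1$) guarantees the existence of an empty upward $\kappa$-stair $\cM$ in the leftmost quarter of $V_{(1,0)}$, which dictates exactly \emph{where} the $v_1$-strips of the intermediate fibers get elongated as they descend toward $V_{(-1,0)}$.

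The first step is to bound the indicator appearing in $I_2$ by the disjoint sum $\sum_{\cR,\cM} \id_{H_\cR} \id_{H_\cM}$ (the disjointness comes from the fact that $\cR$ is by definition the \emph{rightmost} such half-ring, and $\cM$ the \emph{first} such stair in a fixed ordering), while keeping the indicator $\id_{\{\o_{\vec e_2-\vec e_1}\in G_1\}}$ of goodness of $V_{(-1,1)}$. On each term of the sum, $H_\cR\cap H_\cM$ makes $\L=\bigcup_{i=1}^N F_i$ a deterministic set, so the quantity
\[
a \, := \, \mu_\L\big( f \,\big|\, H_1\cap\cdots\cap H_N\big)
\]
depends only on $\o_{\bbZ^2\setminus\L}$. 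Applying $\var_V(f)\le \mu_V\big((f-a)^2\big)$, and using that $V\subset \L$ together with Fubini, one upgrades $\mu_V\big((f-a)^2\big)$ to $\mu_\L\big((f-a)^2\big)$ (the outer indicators are measurable with respect to variables in $V_{(0,1)}\cup V_{(1,0)}\cup V_{(-1,1)}$, which intersects $\L$ only on the deterministic fibers $F_i$, and the argument of the variance is precisely the one that $a$ is chosen to minimise).

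The key combinatorial input is that, under the three goodness indicators, every helping event $H_1,\dots,H_N$ of Definition~\ref{def:helping:2} is automatically satisfied: for a $v$-strip $S$ with $v\ne v_1$ the portion $\partial^{\rm ext}(S)\cap \hat F_i$ lies entirely in $R_{(-1,1)}\cup R_{(0,1)}$, where the required helping set is provided by part~(a) of the good event at those blocks; for $v=v_1$, the elongated bottom edge of $\hat F_i$ meets precisely the $i^{\text{th}}$ step of $\cM$, which supplies the needed empty helping set by Assumption~\ref{easy}. This allows insertion of $\id_{H_1\cap\cdots\cap H_N}$ inside $\mu_\L$, and then the trivial bound $\mu_\L\big(\id_{H_1\cap\cdots\cap H_N}(f-a)^2\big)\le \mu_\L(H_1\cap\cdots\cap H_N)\,\var_\L\big(f\mid H_1\cap\cdots\cap H_N\big)\le \var_\L\big(f\mid H_1\cap\cdots\cap H_N\big)$ closes the estimate, with the factor $1/p_1$ coming (as in the proof of Claim~\ref{claim:core1}) from bounding the conditional variance with respect to a single block's good event.

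The hard part, compared with Claim~\ref{claim:core1}, is exactly the verification that goodness of the three blocks $V_{(-1,1)}$, $V_{(0,1)}$, $V_{(1,0)}$ suffices for \emph{every} $H_i$, including those fibers whose $v_1$-strip has been elongated in a way that depends on the random stair $\cM$. The potential pitfall is that an elongated $v_1$-strip might require a helping set in a region not covered by the goodness conditions; this is avoided precisely by the way Definition~\ref{def:helping:2} restricts the relevant $v$-strips to those whose exterior boundary has length at least $n_2/(2m)$ \emph{within} $R_{(-1,1)}\cup R_{(0,1)}$, together with the fact that the elongated $v_1$-direction is handled by an actual empty step of $\cM$ rather than by the generic goodness of a block. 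Once this matching is checked, the bookkeeping is essentially the same as in the proof of Claim~\ref{claim:core1}.
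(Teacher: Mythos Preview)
Your proof is correct and mirrors the paper's approach (the paper simply states that the proof is identical to that of Claim~\ref{claim:core1}). Two small corrections worth noting: since $\var_V(f)$ in $I_2$ is \emph{unconditioned}, the factor $1/p_1$ is merely a harmless overestimate rather than arising from a conditional variance; and the helping events $H_i$ of Definition~\ref{def:helping:2} only concern segments lying in $R_{(-1,1)}\cup R_{(0,1)}$, so goodness of the two upper blocks $V_{(-1,1)},V_{(0,1)}$ alone forces all the $H_i$ --- the stair $\cM$ enters only through $H_\cM$ and is needed for the spreading property~(c) used downstream in the proof of Proposition~\ref{prop:I1:I2}, not for the present claim.
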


The proof of Claim~\ref{claim:core2} is identical to that of Claim~\ref{claim:core1}. As discussed above, this completes the proof of the Proposition~\ref{prop:I1:I2}, and hence of Theorem~\ref{mainthm:2} in the case where $\cU$ is $\a$-rooted and Assumption~\ref{easy} holds.
}}

\subsection{The $\b$-unrooted case}
\label{sec:beta-unrooted}

In this section we assume that the bilateral difficulty $\b$ of the updating rule $\cU$ is smaller than $2\a$. We will prove that, if Assumption~\ref{easy} holds, then there exists a constant $\lambda = \lambda(\cU)$ such that 
$$\bbE_\mu(\t_0) \leq \frac{\trel(q,\cU)}{q} \leq {{\exp\Big( \lambda \cdot q^{-\b} \big( \log(1/q) \big)^3 \Big)}}$$
for all sufficiently small $q > 0$. Note that the first inequality follows from~\eqref{eq:mean-infection}, and so, by Definition~\ref{def:PC}, it will suffice to prove that
\begin{equation}\label{eq:unrooted:aim}
\var(f) \leq {{\exp\Big( \lambda \cdot q^{-\b} \big( \log(1/q) \big)^3 \Big)}} \sum_{x \in \bbZ^2} \mu\big( c_x \var_x(f) \big)
\end{equation}
for some $\lambda = \lambda(\cU)$ and all local functions $f$. We will deduce a bound of the form~\eqref{eq:unrooted:aim} from the \emph{unoriented} constrained Poincar\'e inequality~\eqref{eq:9FA} and Corollary~\ref{cor:critic:spread}. 


{{Recall from Section~\ref{sec:critic:spread} that $C \subset S^1$ is an open semicircle such that $\a(v) \le \b$ for each $v \in C \cup - C$, and that we let $u$ be the mid-point of $C$. Similarly to Section~\ref{sec:critical:rooted}, we set
\begin{equation}\label{def:n1n2:unrooted}
n_1 = \big\lfloor q^{-{{3}}\kappa} \big\rfloor \qquad \text{and} \qquad n_2 = \big\lfloor \kappa^4 q^{-\b} \log (1/q) \big\rfloor,
\end{equation}
where $\kappa = \kappa(\cU)$ is a sufficiently large constant.}}

We need to slightly modify the definition of the good and super-good events $G_1$ and $G_2$ as follows. {{Let $(v_1,\dots, v_m)$ be the quasi-stable directions in $C$, and let $(v_1,\dots, v_{m'})$ be the quasi-stable directions in $-C$ (see Definition~\ref{def:quasi-stable}). As in Section~\ref{sec:critical:rooted}, it follows by Assumption~\ref{easy} that we may choose the voracious sets so that the helping sets for $S_v$ are subsets of $\partial^{\rm ext}(S_v)$ for each quasi-stable direction $v \in C \cup - C$.}}


{{
\begin{definition}[Good and super-good events]
\label{def:new goodsets} \ 
\begin{enumerate}
  \item The block $V_i = R_i \cap \bbZ^2$ satisfies the \emph{good} event $G_1$ iff:  
  \begin{enumerate}[(a)]
       \item for each quasi-stable direction $v \in C$ and every $v$-strip $S$
    such that the length of the segment $\partial^{\rm ext}(S) \cap R_i$ is at least $n_2/(4m)$, there
    exists an empty helping set $Z \subset {{\partial^{\rm ext}(S) \cap V_i}}$ for~$S$;
       \item for each quasi-stable direction $v \in -C$ and every $v$-strip $S$
    such that the length of the segment $\partial^{\rm ext}(S) \cap R_i$ is at least $n_2/(4m')$, there
    exists an empty helping set $Z \subset {{\partial^{\rm ext}(S) \cap V_i}}$ for~$S$; 
    \item there exist two empty upward $\kappa$-stairs, one within the leftmost quarter of $V_i$, and one within the rightmost quarter of $V_i$. 
  \end{enumerate}
  \item The block $V_i$ satisfies the \emph{super-good} event $G_2$ iff it satisfies the good event $G_1$, and moreover there exist two empty quasi-stable half-rings $\cR^+$ and $\cR^-$, of width $\kappa$, that both fit in $V_i$, with $\cR^+$ relative to $C$ and entirely contained in the rightmost quarter of $R_i$, and with $\cR^-$ relative to $-C$ and entirely contained in the leftmost quarter of $R_i$.
\end{enumerate}
\end{definition}
}}

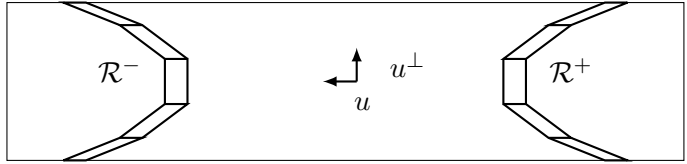
\begin{figure}[ht]
  \centering
  \begin{tikzpicture}[>=latex, scale=0.15]
\begin{scope}[shift={(5,0)}, rotate around={180:(10,10)}]
\draw [thick] (0,0)--(5,2)--(7,2)--(2,0)--(0,0);  
\draw [thick] (5,2)--(9,5)--(11,5)--(7,2);
\draw [thick] (9,5)--(9,9)--(11,9)--(11,5);
\draw [thick] (9,9)--(5,12)--(7,12)--(11,9);
\draw [thick] (5,12)--(0,14)--(2,14)--(7,12);
\end{scope}
\begin{scope}[shift={(-5,0)}]
\node at (6.5,11) {$u$};
\node at (10.5,14.5) {$u^\perp$};
 \draw [->,thick, shift={(-2,0)}] (8,13) -- (5,13); 
 \draw [->,thick, shift={(-2,0)}] (8,13)--(8,16);
\node at (25,14) {$\cR^+$};
\node at (-15,14) {$\cR^-$};
\end{scope}
\begin{scope}[shift={(-25,+6)}]
\draw [thick] (0,0)--(5,2)--(7,2)--(2,0)--(0,0);  
\draw [thick] (5,2)--(9,5)--(11,5)--(7,2);
\draw [thick] (9,5)--(9,9)--(11,9)--(11,5);
\draw [thick] (9,9)--(5,12)--(7,12)--(11,9);
\draw [thick] (5,12)--(0,14)--(2,14)--(7,12);
\end{scope}
\draw (-30,6) rectangle (30,20);
  \end{tikzpicture}
   \caption{The two quasi-stable half-rings $\cR^\pm$. For simplicity they have been
     drawn as {{mirror images of one another}}, although in general {{the quasi-stable directions do not necessarily have this property}}.}\label{fig:+/- half-ring}
\end{figure}

It is easy to check that, with the new definition of the good and super-good events, Lemma \ref{lem:p1p2} still holds. {{It follows, by Theorem~\ref{thm:CPI}, that the unconstrained Poincar\'e inequality~\eqref{eq:9FA} holds for any local function $f$, i.e.,
\begin{align}\label{eq:7:1}
& \var(f) \leq T(p_2) \bigg( \sum_{\varepsilon=\pm 1}\sum_{i\in \bbZ^2}\mu\left(\id_{\{\o_{i+\varepsilon\vec e_2} \in G_2\}}\id_{\{\o_{j}\in G_1\, \forall j\in \bbL^{\varepsilon}(i)\}} \var_i(f)\right) \nonumber\\
& \hspace{3.5cm} + \sum_{\varepsilon=\pm 1}\sum_{i\in \bbZ^2}\mu\left(\id_{\{\o_{i+\varepsilon\vec e_1} \in G_2\}}\id_{\{\o_{i-\varepsilon \vec e_1}\in G_1\}} \var_i\big( f \tc G_1 \big) \right) \bigg). 
\end{align}
with 
\[
T(p_2) \, = \, p_2^{-O(1)} \, = \, \exp\Big( O\big( q^{-\b} \log(1/q)^2 \big) \Big). 
\]
As in Section~\ref{sec:critical:rooted}, using translation invariance it will suffice to bound from above, for a fixed (and arbitrary) local function~$f$, the following four generic terms:
\[
I^\pm _1(i) := \mu\left(\id_{\{\o_{i\pm\vec e_1} \in G_2\}} \id_{\{\o_{i \mp \vec e_1} \in G_1\}} \var_{V_i}(f \tc G_1)\right),
\] 
and
\[
I^\pm_2(i) := \mu\left(\id_{\{\o_{i \pm \vec e_2} \in G_2\}} \id_{\{\o_j \in G_1 \, \forall j \in \bbL^\pm(i)\}} \var_{V_i}(f) \right).
\]
}}

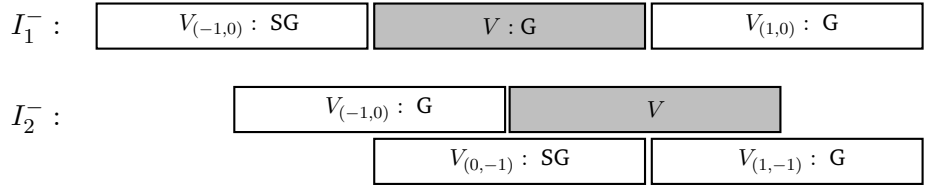
\begin{figure}[ht]
\centering
\begin{tikzpicture}[scale=0.3]
\node at (-15,1) {$I^-_1:$};
\node at (-15,-3) {$I^-_2:$};
\begin{scope}
\draw [thick, fill=lightgray] (0,0) rectangle (12,2);
\draw [thick,  xshift=12.3cm] (0,0) rectangle (12,2);
\draw [thick,  xshift=-12.3cm] (0,0) rectangle (12,2);
\node [scale=0.8] at (6,1) {$V:\text{G}$};
\node [scale=0.8] at (18.5,1) {$V_{(1,0)}:\text{ G}$};
\node [scale=0.8] at (-6,1) {$V_{(-1,0)}:\text{ SG}$};
\end{scope}
\begin{scope}[shift={(0,-6)}]
\draw [thick] (0,0) rectangle (12,2);
\draw [thick, xshift=12.3cm] (0,0) rectangle (12,2);
\draw [thick,xshift=-6.2cm,yshift=2.3cm] (0,0) rectangle
(12,2);
\node [scale=0.8] at (12.5-12.3,3.3) {$V_{(-1,0)}:\text{ G}$};
\draw [thick, fill=lightgray, xshift=6cm,yshift=2.3cm] (0,0) rectangle
(12,2);
\node [scale=0.8] at (6,1) {$V_{(0,-1)}:\text{ SG}$};
\node [scale=0.8] at (18.5,1) {$V_{(1,-1)}:\text{ G}$};
\node [scale=0.8] at (12.5,3.3) {$V$};
\end{scope}
\end{tikzpicture}
\caption{{{In $I_1^-$ the block $V\equiv V_{(0,0)}$ is conditioned to be good (G), while the blocks $V_{(1,0)}$ and $V_{(-1,0)}$ are good and super-good (SG) respectively. In $I_2^-$ the blocks $V_{(-1,0)}$ and $V_{(1,-1)}$ are good, the block $V_{(0,-1)}$ is super-good, and $V$ is unconditioned.}}}\label{fig:setting2}
\end{figure}

{{Define $W^+_1 = W^-_1 = V_{(0,0)} \cup V_{(-1,0)} \cup V_{(1,0)}$, and $W_2^+ = V_{(0,0)} \cup V_{(-1,0)} \cup V_{(1,0)} \cup V_{(-1,1)} \cup V_{(0,1)}$ and $W_2^- = V_{(0,0)} \cup V_{(1,0)} \cup V_{(-1,0)} \cup V_{(1,-1)} \cup V_{(0,-1)}$. The following upper bounds on $I_1^\pm$ and $I_2^\pm$ (cf. Proposition~\ref{prop:I1:I2}) follow exactly as in Section~\ref{sec:critical:rooted}.

\begin{proposition}
\label{prop:I:plusminus}
For each $j \in \{1,2\}$, there exist $O(1)$-neighbourhoods $\hat W^\pm_j$ of $W^\pm_j$ such that
\begin{align*}
  I^\pm_j \, \leq \exp\Big( O\big( q^{-\b} \log(1/q)^{{3}} \big) \Big) \sum_{x \in \hat W^\pm_j} \mu\big(c_x \var_x(f)\big).
\end{align*}
\end{proposition}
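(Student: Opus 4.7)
The plan is to follow the approach of Section~\ref{sec:critical:rooted} almost verbatim, for each of the four terms $I_1^\pm$ and $I_2^\pm$ in turn. The crucial structural fact that makes this possible in the $\b$-unrooted regime is the choice of semicircle $C$ and its opposite $-C$: every quasi-stable direction in $C \cup -C$ has difficulty at most $\b$, so Corollary~\ref{cor:critic:spread} can be applied both with $C$ and with $-C$ to transport an empty quasi-stable half-ring along either of the directions $u$ or $-u$. This bidirectionality is exactly what is encoded in the symmetric Definition~\ref{def:new goodsets} of the super-good event (requiring the simultaneous presence of $\cR^+$ and $\cR^-$, and of two upward $\kappa$-stairs), and it is the reason why each of the four terms above admits a treatment analogous to $I_1$ or $I_2$ in the $\a$-rooted case.

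For $I_1^+$ the construction of the fibers is literally the one used for $I_1$ in Section~\ref{sec:East-like}: the super-good block $V_{(1,0)}$ supplies a rightmost empty $\cR^+$ of width $\kappa$, the fibers $F_i$ are translates of $\cR^+$ by multiples of $\kappa u$ covering $V_{(0,0)} \cup V_{(-1,0)}$, and the helping events $H_i$ (relative to the quasi-stable directions in $C$) are guaranteed by the goodness of $V_{(-1,0)},\, V_{(0,0)},\, V_{(1,0)}$. The term $I_1^-$ is the mirror of $I_1^+$: here $V_{(-1,0)}$ is super-good and we use its empty $\cR^-$ (relative to $-C$, sitting in the leftmost quarter of $R_{(-1,0)}$) together with Corollary~\ref{cor:critic:spread} applied with semicircle $-C$ to define fibers translated in direction $-u$, filling $V_{(0,0)} \cup V_{(1,0)}$; the helping events are now relative to the quasi-stable directions of $-C$ and are ensured by part~(b) of Definition~\ref{def:new goodsets}. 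The cases $I_2^\pm$ mirror $I_2$ in the same way: for $I_2^-$, say, the super-good block $V_{(0,-1)}$ provides $\cR^+$ in the rightmost quarter of $R_{(0,-1)}$ and the good block $V_{(1,-1)}$ provides the empty upward $\kappa$-stair needed to elongate selected $v_1$-strips, with the fibers descending the stair exactly as in Figure~\ref{fig:strategy}; the $I_2^+$ case uses the opposite orientation. In every case the analogue of Claims~\ref{claim:core1} and~\ref{claim:core2} produces
\[
I_j^\pm \;\le\; \frac{1}{p_1} \sum_{\cR,\,\cM} \mu\Big( \id_{H_\cR}\, \id_{H_\cM}\, \var_\L\big( f \,\big|\, H_1 \cap \cdots \cap H_N \big) \Big),
\]
with the appropriate interpretation of $H_\cR$, $H_\cM$ and $\L=\bigcup_{i=1}^N F_i$ (for $I_1^\pm$ the sum is only over $\cR$).

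The final assembly is identical to that in Section~\ref{sec:core}: Proposition~\ref{lem:gen-Poincare} applied to the generalised East chain on $\otimes_{i=1}^N (S_i,\nu_i)$ with constraining event $\{F_i \text{ empty}\}$ bounds the inner variance in terms of local variances $\var_{\nu_i}(f)$, and Lemma~\ref{lem:var0} then converts each of these into Dirichlet form contributions on an $O(1)$-neighbourhood $\hat W_j^\pm$ of $W_j^\pm$ (note that $\cU$-bootstrap infection of $F_i$ from $F_{i+1}$ under the helping event is guaranteed exactly as before, using Corollary~\ref{cor:critic:spread} with $C$ for the $+$ cases and with $-C$ for the $-$ cases). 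With $p_2 \ge \exp\big( -O(q^{-\b}\log(1/q)^2) \big)$ and $N \le q^{-O(1)}$, equation~\eqref{eq:scaling} yields $T_{\text{\tiny East}}(N,\bar\a) \le \exp\big(O(q^{-\b}\log(1/q)^3)\big)$, which after absorbing the $q^{-O(n_2)}$ factor coming from Lemma~\ref{lem:var0} gives the bound in the proposition. The only real obstacle relative to the $\a$-rooted proof is organisational: one must check that, in each of the four cases, the orientation of the half-rings and stairs imposed by Definition~\ref{def:new goodsets} matches the direction in which Corollary~\ref{cor:critic:spread} is to be applied. Once the correct pairing between the signs $\pm$ and the semicircles $\pm C$ is fixed, no new geometric or probabilistic estimate is needed beyond those already used in Section~\ref{sec:critical:rooted}.
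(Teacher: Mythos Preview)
Your approach is the same as the paper's: $I_1^+,I_2^+$ are handled exactly as $I_1,I_2$ in Section~\ref{sec:critical:rooted} (since the new good and super-good events of Definition~\ref{def:new goodsets} imply the old ones), and $I_1^-,I_2^-$ are obtained from $I_1^+,I_2^+$ by a rotation of $\pi$ swapping $C\leftrightarrow -C$, $\cR^+\leftrightarrow\cR^-$, leftmost $\leftrightarrow$ rightmost quarters. The analytic steps (Claims~\ref{claim:core1}--\ref{claim:core2}, Proposition~\ref{lem:gen-Poincare}, Lemma~\ref{lem:var0}) and the resulting $\exp\big(O(q^{-\b}\log(1/q)^3)\big)$ bound are exactly as you say.

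There is, however, a concrete mix-up in your description of the $I_2^\pm$ pairings. The term $I_2^+$ has the super-good block at $V_{(0,1)}$ (above $V$) and is literally the $I_2$ of Section~\ref{sec:critical:rooted}: one uses $\cR^+$ in the rightmost quarter of $R_{(0,1)}$ together with the stair in the \emph{leftmost} quarter of $V_{(1,0)}$, fibers moving in direction $u$. By the $\pi$-rotation, $I_2^-$ therefore uses $\cR^-$ in the \emph{leftmost} quarter of $R_{(0,-1)}$ together with the stair in the \emph{rightmost} quarter of $V_{(-1,0)}$, fibers moving in direction $-u$. Your proposal assigns $\cR^+$ to $I_2^-$ with the stair taken from $V_{(1,-1)}$; with that choice the translates of $\cR^+$ in direction $u$ move away from $V_{(0,0)}$ and out of $W_2^-$, so the fibers never cover $V$. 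Once the pairing is corrected as above (which you yourself flag as the only delicate point), the argument goes through unchanged.
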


}}



\begin{proof}[{{Sketch proof of Proposition~\ref{prop:I:plusminus}}}]
The terms $I_1^+$ and $I_2^+$ can be treated exactly as the terms $I_1$ and $I_2$ analysed in
the previous section, because the new good and super-good events
imply the good and super-good events for the $\a$-rooted case. {{We may therefore repeat the proof of Proposition~\ref{prop:I1:I2}, with the only difference being that $n_2$ is now as defined in~\eqref{def:n1n2:unrooted}, to obtain the claimed bounds on $I_1^+$ and $I_2^+$.}} 

{{For the new terms, $I_1^-$ and $I_2^-$ (which are illustrated in Figure~\ref{fig:setting2}), the argument is exactly the same}} after a rotation of $\pi$ of the coordinate axes. {{Indeed, a good block now contains suitable empty helping sets for the quasi-stable directions in $-C$ (as well as $C$), and an empty upward $\kappa$-stair in the rightmost quarter (as well as the leftmost), and a super-good block contains an empty quasi-stable half-ring relative to $-C$ in the leftmost quarter (as well as one relative to $C$ in the rightmost quarter). Such a rotation therefore transforms $I_1^-$ and $I_2^-$ into $I_1^+$ and $I_2^+$, and so the proof of the claimed bounds is once again identical to that of Proposition~\ref{prop:I1:I2}.}}
\end{proof}

{{
\begin{remark}\label{rmk:losing:log}
As noted in Remark~\ref{rem:whyEast}, our application of the generalized East chain in the proof above cost us a factor of $\log(1/q)$ in the exponent. More precisely, this log-factor was lost in step~\eqref{eq:losing:log} of the proof of Proposition~\ref{prop:I:plusminus}, when (roughly speaking) we passed through an energy barrier corresponding to the simultaneous existence of about $\log(1/q)$ empty quasi-stable half-rings in a single block. As stated precisely in Conjecture~\ref{unbalanced:conj}, we expect that (at least for models with $\b = \a$) the true relaxation time does not contain this additional factor of $\log(1/q)$. 
\end{remark}
}}

{{Combining Proposition~\ref{prop:I:plusminus} with~\eqref{eq:7:1}, and noting that $|\hat W^\pm_j| = q^{-O(1)}$, we obtain a final Poincar\'e inequality of the form~\eqref{eq:unrooted:aim}, i.e.,
\[
\var(f) \leq \exp\Big( O\big( q^{-\b} \log(1/q)^3 \big) \Big) \sum_{x \in \bbZ^2} \mu\big( c_x \var_x(f) \big),
\]
as required. This completes the proof of Theorem~\ref{mainthm:2} for update families $\cU$ such that Assumption~\ref{easy} holds.}} \qed

\section{Critical KCM: removing the simplifying assumption}
\label{sec:fullgen}

In this section we {{explain how to modify the proof given in Section~\ref{sec:critic-ub} in order to avoid using Assumption~\ref{easy}. Since the argument is essentially identical for $\a$-rooted and $\b$-unrooted families, for simplicity we will restrict ourselves to}} the $\a$-rooted case. 

Our solution requires a slight change in the geometry of the quasi-stable half-ring. In what follows we will always work in the frame $(-u,u^\perp)$, where $u$ is the midpoint of the semicircle $C$ {{given by Lemma~\ref{cor:Ccritic} (cf.~Sections~\ref{sec:critic:spread} and~\ref{sec:critical:rooted}). 

Recall from Definition~\ref{def:strips} the definitions of the $+$- and $-$-boundaries of a $v$-strip $S$. The following key definition is illustrated in Figure~\ref{fig:general half-ring}.}} 

 \begin{definition}[Generalised quasi-stable half-rings]
\label{def:gen-qshr}
{{Let $(v_1,\dots, v_m)$ be the quasi-stable directions in $C$, ordered as in Definition \ref{def:half-ring}, and let $\cR$ be a quasi-stable half-ring of width $w$ and length $\ell$ relative to $C$. For each quasi-stable direction $v \in C$, let $S_v$ be the $v$-strip in $\cR$, and let $\hat S_v^{l}$ and $\hat S_{v}^{r}$ be the (unique) $v$-strips of width $w/3$ and length $\ell/3$ satisfying the following properties:
\begin{itemize}
\item[(i)] $\hat S_{v}^{l}$ and $\hat S_{v}^{r}$ each share exactly one corner with $S_{v}$; moreover each of these corners lies at the ``top" of $S_v$ when working in the frame $(-u,u^\perp)$.
\item[(ii)] $\partial_- (\hat S_{v}^{l}) \subset \partial_+(S_{v})$ and $\partial_- (\hat S_{v}^{r})\subset \partial_- (S_{v})$.
\end{itemize}
Set  
\[
S_{v}^g := \big( S_{v} \setminus \hat S_{v}^{r} \big) \cup \hat S_{v}^{l} ,
\]
and set 
\[
  \cR^g := \bigcup_{i=1}^m S_{v_i}^g.
\]
We call $\cR^g$ the \emph{generalised version of $\cR$}, 
and define the ``core'' of $\cR^g$ to be the set $\cR^g \cap \cR$.}}
\end{definition}

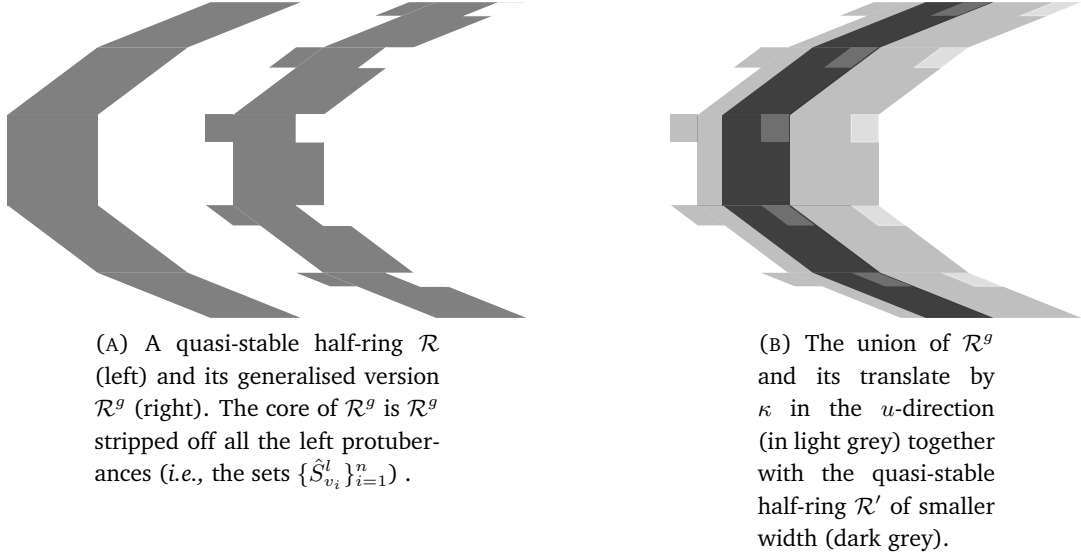
\begin{figure}[ht]
        \centering
  \subfloat[A quasi-stable half-ring $\cR$ (left) and its generalised
     version $\cR^g$ (right). The core of $\cR^g$ is $\cR^g$
      stripped off all the left protuberances (\ie the sets $\{\hat
      S_{v_i}^{l}\}_{i=1}^n$)  \label{fig:general half-ring}
.]{ 
  \begin{tikzpicture}[scale=0.3]
\begin{scope}[rotate around={180:(10,10)}]

 \begin{scope}
\fill [gray] (0,0)--(5,2)--(9,2)--(4,0)--cycle;  
\fill [gray] (5,2)--(9,5)--(13,5)--(9,2);
\fill [gray] (9,5)--(9,9)--(13,9)--(13,5);
\fill [gray] (9,9)--(5,12)--(9,12)--(13,9);
\fill [gray] (5,12)--(0,14)--(4,14)--(9,12);
\end{scope}

\begin{scope}[shift={(10,0)}]
\fill [gray] (0,0)--(5,2)--(9,2)--(4,0)--cycle;  
\fill [gray] (5,2)--(9,5)--(13,5)--(9,2);
\fill [gray] (9,5)--(9,9)--(13,9)--(13,5);
\fill [gray] (9,9)--(5,12)--(9,12)--(13,9);
\fill [gray] (5,12)--(0,14)--(4,14)--(9,12);
\end{scope}

\begin{scope}
\foreach \i in {0.3}{   
\fill [gray, shift={(4,0)}] (0,0)--(5*\i,2*\i)--(9*\i,2*\i)--(4*\i,0); 
\fill [white, draw=white] (0,0)--(5*\i,2*\i)--(9*\i,2*\i)--(4*\i,0); 
\fill [white, draw=white, shift={(5,2)}] (0,0)--(4*\i,3*\i)--(8*\i,3*\i)--(4*\i,0);
\fill [gray, shift={(9,2)}] (0,0)--(4*\i,3*\i)--(8*\i,3*\i)--(4*\i,0);
\fill [white, draw=white, shift={(9,5)}] (0,0)--(0,4*\i)--(4*\i,4*\i)--(4*\i,0); 
\fill [gray, shift={(13,5)}] (0,0)--(0,4*\i)--(4*\i,4*\i)--(4*\i,0); 
\fill [ white, draw=white, shift={(9,9)}] (0,0)--(-4*\i,3*\i)--(0,3*\i)--(4*\i,0); 
\fill [ gray, shift={(13,9)}] (0,0)--(-4*\i,3*\i)--(0,3*\i)--(4*\i,0); 
\fill [ white, draw=white, shift={(5,12)}] (0,0)--(-5*\i,2*\i)--(-\i,2*\i)--(4*\i,0); 
\fill [ gray, shift={(9,12)}] (0,0)--(-5*\i,2*\i)--(-\i,2*\i)--(4*\i,0); 
}
\end{scope}
\end{scope}
  \end{tikzpicture}
}
\hfill
\subfloat[The union of $\cR^g$ and its translate by $\kappa$ in the $u$-direction (in light grey) together with the {{quasi-stable half-ring}} 
$\cR'$ {{of}} smaller width (dark grey). \label{fig:general half-ring2}]{                
\begin{tikzpicture}[scale=0.3]
\begin{scope}[rotate around={180:(10,10)}]
\begin{scope}[opacity=1,shift={(3.9,0)}]
\fill [black] (0,0)--(5,2)--(8,2)--(3,0)--cycle;  
\fill [black] (5,2)--(9,5)--(12,5)--(8,2)--cycle;
\fill [black] (9,5)--(9,9)--(12,9)--(12,5)--cycle;
\fill [black] (9,9)--(5,12)--(8,12)--(12,9)--cycle;
\fill [black] (5,12)--(0,14)--(3,14)--(8,12)--cycle;
\end{scope}

\begin{scope}[opacity=0.5]

 \begin{scope}

\fill [gray] (0,0)--(5,2)--(9,2)--(4,0)--cycle;  
\fill [gray] (5,2)--(9,5)--(13,5)--(9,2);
\fill [gray] (9,5)--(9,9)--(13,9)--(13,5);
\fill [gray] (9,9)--(5,12)--(9,12)--(13,9);
\fill [gray] (5,12)--(0,14)--(4,14)--(9,12);

\foreach \i in {0.3}{   
\fill [gray, shift={(4,0)}] (0,0)--(5*\i,2*\i)--(9*\i,2*\i)--(4*\i,0); 
\fill [white, draw=white] (0,0)--(5*\i,2*\i)--(9*\i,2*\i)--(4*\i,0); 
\fill [white, draw=white, shift={(5,2)}] (0,0)--(4*\i,3*\i)--(8*\i,3*\i)--(4*\i,0);
\fill [gray, shift={(9,2)}] (0,0)--(4*\i,3*\i)--(8*\i,3*\i)--(4*\i,0);
\fill [white, draw=white, shift={(9,5)}] (0,0)--(0,4*\i)--(4*\i,4*\i)--(4*\i,0); 
\fill [gray, shift={(13,5)}] (0,0)--(0,4*\i)--(4*\i,4*\i)--(4*\i,0); 
\fill [ white, draw=white, shift={(9,9)}] (0,0)--(-4*\i,3*\i)--(0,3*\i)--(4*\i,0); 
\fill [ gray, shift={(13,9)}] (0,0)--(-4*\i,3*\i)--(0,3*\i)--(4*\i,0); 
\fill [ white, draw=white, shift={(5,12)}] (0,0)--(-5*\i,2*\i)--(-\i,2*\i)--(4*\i,0); 
\fill [ gray, shift={(9,12)}] (0,0)--(-5*\i,2*\i)--(-\i,2*\i)--(4*\i,0); 
}
\end{scope}

 \begin{scope}[shift={(4,0)}]
\fill [gray] (0,0)--(5,2)--(9,2)--(4,0)--cycle;  
\fill [gray] (5,2)--(9,5)--(13,5)--(9,2);
\fill [gray] (9,5)--(9,9)--(13,9)--(13,5);
\fill [gray] (9,9)--(5,12)--(9,12)--(13,9);
\fill [gray] (5,12)--(0,14)--(4,14)--(9,12);

\foreach \i in {0.3}{   
\fill [gray,shift={(4,0)}] (0,0)--(5*\i,2*\i)--(9*\i,2*\i)--(4*\i,0); 
\fill [gray] (0,0)--(5*\i,2*\i)--(9*\i,2*\i)--(4*\i,0); 
\fill [gray, shift={(5,2)}] (0,0)--(4*\i,3*\i)--(8*\i,3*\i)--(4*\i,0);
\fill [gray, shift={(9,2)}] (0,0)--(4*\i,3*\i)--(8*\i,3*\i)--(4*\i,0);
\fill [gray, shift={(9,5)}] (0,0)--(0,4*\i)--(4*\i,4*\i)--(4*\i,0); 
\fill [gray, shift={(13,5)}] (0,0)--(0,4*\i)--(4*\i,4*\i)--(4*\i,0); 
\fill [gray, shift={(9,9)}] (0,0)--(-4*\i,3*\i)--(0,3*\i)--(4*\i,0); 
\fill [gray, shift={(13,9)}] (0,0)--(-4*\i,3*\i)--(0,3*\i)--(4*\i,0); 
\fill [gray, shift={(5,12)}] (0,0)--(-5*\i,2*\i)--(-\i,2*\i)--(4*\i,0); 
\fill [gray, shift={(9,12)}] (0,0)--(-5*\i,2*\i)--(-\i,2*\i)--(4*\i,0); 
}
\end{scope}
\end{scope}
  
\end{scope}
  \end{tikzpicture}
}
\caption{A generalised quasi-stable half-ring.}\label{fig:generalised:ring}
 \end{figure}

Recall now {{the following two key ingredients of}} the proof given in the previous section {{(see Section \ref{sec:core})}} under the simplifying Assumption~\ref{easy}:
\begin{enumerate}[(i)]
\item a {{sufficiently large}} empty quasi-stable half-ring $\cR$ is able to completely infect its translate $\cR+\kappa u$, provided that {{a certain ``helping" event occurs;}}
\item {{the helping event depends only on the configuration inside $\cR$.}}    
\end{enumerate}
Here we prove a similar result for the generalised quasi-stable half-rings \emph{without} the simplifying assumption. {{We first define the helping event, cf. Definition~\ref{def:helping}.}}

\begin{definition}
Given a {{quasi-stable half-ring $\cR$ of length $\ell$ and width $\kappa$, we define $H(\cR)$ to be the event that for each quasi-stable direction $v \in C$ and every $v$-strip $S$ of length $\ell$ with $\partial_+(S) \subset \cR$, there exists an empty helping set for~$S$ in~$\cR^g$.}}
\end{definition}

{{If $H(\cR)$ holds, then we will say that $\cR^g$ is \emph{helping}. We will modify (see below) the good and super-good events $G_1$ and $G_2$ (see Definition~\ref{def:goodsets}) so that they guarantee that this helping event occurs, and choose the constant $\kappa = \kappa(\cU) > 0$ (as in Section~\ref{sec:critical:rooted}) so that the conclusion of Lemma~\ref{lem:p1p2} holds, and so that $\kappa u$ has integer coordinates. We will also choose our (generalised) quasi-stable half-rings so that there are no sites of $\bbZ^2$ on their boundary, except on the top and bottom boundaries of the rectangles $R_i$.}}

\begin{lemma}
Let {{$\cR$ be a quasi-stable half-ring of length $\ell$ and width $\kappa$, and let $\cR^g$ be the generalised version of $\cR$.}} Assume that the core of $\cR^g$ is empty and that $\cR^g$ and its translate $\cR^g + \kappa u$ are {{both}} helping. Then there exists a $O(1)$-neighbourhood $U$ of $\cR^g \cup \big( \cR^g + \kappa u \big)$ such that {{the $\cU$-bootstrap process}} restricted to $U$ is able to infect the core of $\cR^g + \kappa u$.
\end{lemma}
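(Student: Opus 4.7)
The plan is to adapt the argument of Proposition~\ref{prop:critic:spread} and Corollary~\ref{cor:critic:spread} to the generalised setting. Recall that those results rely on each $v$-strip in $\cR$ admitting an empty helping set lying on $\partial^{\rm ext}(S)$; Assumption~\ref{easy} was exactly what made this possible, because it forced the voracious sets $Z_v$ to lie on $\ell_v$. Without the assumption, the helping sets for a $v$-strip $S$ may protrude away from $\ell_v$, and the left protuberances $\hat S_{v_i}^l$ in the definition of $\cR^g$ and $\cR^g + \kappa u$ are precisely there to provide the room they need. The core of $\cR^g$ is $\cR$ with the small right protuberances $\hat S_{v_i}^r$ removed, and these removed pieces sit on the $-v_i$ side of each $v_i$-strip; since propagation in the $u$-direction is carried out by advancing the $+v_i$-boundary of each strip, the absence of these pieces does not obstruct the argument, and (symmetrically) we do not need to infect them on the target side $\cR^g + \kappa u$ either.

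More concretely, I would iterate a slight variant of Proposition~\ref{prop:critic:spread} in steps of size $O(1)$ along the $u$-direction, starting from the empty core of $\cR^g$ and terminating at the core of $\cR^g + \kappa u$. At each step, and for each quasi-stable direction $v_i \in C$, I apply Lemma~\ref{lem:strip} to the $v_i$-strip currently at the frontier of infection, using an empty helping set extracted from $\cR^g \cup (\cR^g + \kappa u)$; the required helping set exists by the hypothesis that both $\cR^g$ and $\cR^g + \kappa u$ are helping, since during the first half of the iteration the relevant $v$-strip $S$ of length $\ell$ has $\partial_+(S) \subset \cR$ (so its helping set lies in a protuberance $\hat S_{v_i}^l$ of $\cR^g$, guaranteed by $H(\cR)$), while during the second half $\partial_+(S) \subset \cR + \kappa u$ (so its helping set lies in a protuberance $\hat S_{v_i}^l + \kappa u$ of $\cR^g + \kappa u$, guaranteed by $H(\cR + \kappa u)$). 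Between consecutive strips the infection is pushed around the corners exactly as in the proof of Proposition~\ref{prop:critic:spread}, by iterated applications of Lemma~\ref{lem:quasi}. The whole argument takes place inside the $\lambda$-neighbourhood $U$ of $\cR^g \cup (\cR^g + \kappa u)$, where $\lambda = \lambda(\cU) = O(1)$ is the constant produced by Proposition~\ref{prop:critic:spread}.

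The main obstacle is purely one of bookkeeping: verifying that at each of the $O(1)$ iteration steps the $v$-strip $S$ whose $+v$-boundary one wishes to advance satisfies $\partial_+(S) \subset \cR$ or $\partial_+(S) \subset \cR + \kappa u$, so that the helping sets demanded by Lemma~\ref{lem:strip} are among those guaranteed by the helping events. The only non-obvious case is the transition step, in which $\partial_+(S)$ straddles the boundary between $\cR$ and $\cR + \kappa u$; this is handled by replacing $S$ with a slightly shifted length-$\ell$ strip whose $+$-boundary lies entirely in one of $\cR,\, \cR + \kappa u$, extracting the helping set there, and using that the displacement is $O(1)$ so the resulting empty set still functions as a helping set for $S$ after absorption into $U$. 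The neighbourhood $U$ absorbs all such $O(1)$ slack, and the conclusion follows.
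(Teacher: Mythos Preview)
Your approach is essentially the same as the paper's: iterate Proposition~\ref{prop:critic:spread} in steps of size $O(1)$ in the $u$-direction, using helping sets guaranteed by $H(\cR)$ and $H(\cR+\kappa u)$, and fill in the corners via Lemma~\ref{lem:quasi}. You also correctly isolate the only non-routine point, the transition where $\partial_+(S)$ may straddle $\cR$ and $\cR+\kappa u$.

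The paper handles the straddling issue differently, and its device is cleaner than your shifting argument. Rather than working with the full-width half-ring and patching the transition step, the paper introduces an auxiliary quasi-stable half-ring $\cR'$ of width $\kappa/3$ (and length $\ell$), sliding in the $u$-direction through $\cR^g\cup(\cR^g+\kappa u)$. The choice of width $\kappa/3$---matching the dimensions of the protuberances $\hat S^l_v,\hat S^r_v$ in Definition~\ref{def:gen-qshr}---guarantees by construction that for every admissible position of $\cR'$ and every quasi-stable direction $v\in C$, the segment $\partial_+(S'_v)$ lies entirely inside $\cR$ or entirely inside $\cR+\kappa u$, so one of the two helping events applies directly and no transition case ever arises. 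Your proposed fix---shifting $S$ to a nearby strip $S'$ with $\partial_+(S')$ in one of $\cR,\cR+\kappa u$ and asserting that a helping set for $S'$ ``still functions'' for $S$---is not obviously correct as stated: a helping set for $S'$ consists of translates of $Z_v$ anchored on $\partial^{\rm ext}(S')$, which for a shift with a component in the $v$-direction lies on a different line parallel to $\ell_v$ than $\partial^{\rm ext}(S)$, and Lemma~\ref{lem:strip} does not directly apply. This is not a fundamental error---one could repair it by first advancing the infected region to $S'$ and then applying Lemma~\ref{lem:strip} there---but the paper's width-$\kappa/3$ trick sidesteps the bookkeeping entirely.
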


\begin{proof}
The {{lemma is a straightforward consequence of Proposition~\ref{prop:critic:spread}, using}} the geometry of the generalised quasi-stable half-rings. {{To spell out the details (cf. the proof of Corollary~\ref{cor:critic:spread}), fix $\cR$}} as in the lemma, and let $\cR'$ be {{any}} quasi-stable half-ring {{of length $\ell$ and width $\kappa/3$}} such that: 
\begin{itemize}
\item[(a)] {{$\cR' = \cR + \l u$}} for some $\l \ge 0$, and 
\item[(b)] ${{\cR'}} \subset \cR^g \cup \big( \cR^g + \kappa u)$,
\end{itemize}
see Figure~\ref{fig:general half-ring2}. {{We claim that, for every quasi-stable direction $v \in C$, there exists an empty helping set in $\cR^g \cup \big( \cR^g + \kappa u \big)$ for the $v$-strip $S'_{v}$ of $\cR'$. Indeed, this follows from the fact that $\cR^g$ and $\cR^g + \kappa u$ are both helping, since (by construction) either $\partial_+(S'_v) \subset \cR$ or $\partial_+(S'_v) \subset \cR + \kappa u$. 

Now, since the core of $\cR^g$ is empty, it follows, by Proposition \ref{prop:critic:spread}, that there exists a $O(1)$-neighbourhood $U$ of $\cR^g \cup \big( \cR^g + \kappa u \big)$ such that the $\cU$-bootstrap process restricted to $U$ can advance in the $u$-direction, and infect the core of $\cR^g + \kappa u$, as claimed.}}
\end{proof}

Given the above lemma, the proof of Theorem~\ref{mainthm:2} proceeds
exactly as the one given in Section~\ref{sec:critic-ub}, with only two
main changes:
\begin{enumerate}[(a)]
\item the fibers $\{F_i\}_{i=1}^N$ are no longer the
  quasi-stable half-rings (or their elongated version) but rather the
  generalised quasi-stable half-rings (or their elongated version);
\item when defining the generalised East process for the fibers, the
  constraining event $S_i^g$ (see Definition \ref{def:gen:East}), which in
  Section \ref{sec:critic-ub} was simply $S_i^g=\{F_{i}\text{ is empty}\},$ now
  becomes $S_i^g=\{\text{the core of $F_{i}$ is empty}\}.$    
\end{enumerate}
We leave the (straightforward) {{task of verifying the details}} to the reader.

\newpage
\appendix
\section{}

\subsection{Proof of Proposition~\ref{lem:gen-Poincare}}\label{sec:pf-gen-Poincare}

We will follow closely the proof of a very similar result proved
in~\cite{CFM2}*{Proposition 3.4}. Let $\{P_t\}_{t\ge 0}$ be the Markov
semigroup associated to either the generalised East chain or the
generalised FA-1f chain. Using reversibility, it follows (see, e.g.,~\cite{Saloff}*{Theorem~2.1.7}) that 
\begin{equation}
  \label{eq:A1bis}
\lim_{t\to \infty} - \frac 1t \log\Big( \max_\o \big\| P_t(\o,\cdot)
-\nu(\cdot) \big\|_{\rm \tiny TV} \Big)= \frac{1}{\trel},
\end{equation}
where $\|\cdot\|_{\rm \tiny TV}$ denotes the {{total}} variation distance.
 We now claim that {{for every function $f \colon \Omega \to \bbR$}} with $\|f\|_\infty\le 1$,
\begin{equation}\label{eq:A0}
\big\| P_t f - \nu(f) \big\|_\infty \leq C(n,q) e^{-t/t^*}
\end{equation}
for some $0 < C(n,q) < {{\infty}}$ and {{either}} $t^* \le T_{\text{East}}(n,\bar\a)/q$
or $t^* \le T_{\text{FA}}(n,\bar\a)/q$, depending on which of the two
models we are considering. Clearly \eqref{eq:A1bis} and \eqref{eq:A0} imply that $\trel \le t^*$
and {{(recalling Definition~\ref{def:PC})}} the proposition follows.

To prove~\eqref{eq:A0}, let $\t_x(\o)$ be the {{time of the}} first legal ring at
$x$ {{(that is, the first time that the state of $x$ is
    resampled)}} when the starting configuration is $\o$. Then, for any function
$f \colon \otimes_{x\in [n]} S_x \mapsto \bbR$ with $\nu(f)=0,$ we write
\begin{align}\label{eq:A1}
& \|P_tf\|_\infty \leq \max_\o {{\Big\{}} \Big| \bbE_\o\Big( f\big( \o(t) \big) \cdot \mathds
1_{\{\t_x(\o) \,<\, t\ \forall x\}} \Big) \Big| \nonumber\\
& \hspace{4.5cm} + \|f\|_\infty \cdot n \cdot \max_{x\in[n]} \, \bbP_\o\Big( \t_x(\o) > t \Big) {{\Big\}}}, 
\end{align}
where $\bbP_\o(\cdot)$ and $\bbE_\o(\cdot)$ denote the law and
associated expectation of the chain $\{\o(t)\}_{t\ge 0}$ with
$\o(0)=\o$. 

If $\eta(\o)=\{\eta_x(\o)\}_{x\in [n]}$ denotes the collection of the 0-1
variables $\eta_x=\mathds 1_{\{\o_x\in S^g_x\}}$ and $\hat \t_x(\eta)$ is the hitting time of the set {{$\big\{ \eta' : \ \eta'_x \ne \eta_x \big\}$,}} then $\big\{ \t_x(\o) > t \big\} \subset \big\{ \hat \t_x(\eta(\o)) > t \big\}$, and hence
$\bbP_\o\big( \t_x(\o) > t \big) \, \le \, \bbP_\o\big( \hat\t_x(\eta(\o)) > t \big)$.
Notice that $\eta(t)\equiv \eta(\o(t))$ is itself a Markov chain whose law
$\tilde\bbP_{{\eta}}(\cdot)$ coincides with that of either the
non-homogeneous East chain or the non-homogeneous FA-1f chain,
depending on the chain described by
$P_t$. Therefore, $\bbP_\o\big(\hat\t_x(\eta) > t \big)=\tilde \bbP_\eta\big(\hat\t_x(\eta) > t\big),$
where $\eta\equiv \eta(\o)$. 
Letting $\tilde \nu=\text{Ber}(\a_1)\otimes\dots\otimes
\text{Ber}(\a_n),$ we have that $\tilde \nu$ is the reversible measure
for the $\eta$-chain and that
\begin{align*}
\tilde \bbP_\eta\big( \hat\t_x(\eta) > t \big) & \, \leq \, \frac{1}{\min_\eta \tilde\nu(\eta)}\sum_{\eta'} \tilde\nu(\eta'){{\tilde \bbP}}_{\eta'}\big(\hat\t_x{{(\eta')}} > t \big)\\
& \, \leq \,
\begin{cases}
2q^{-n} \exp\big( -tq / T_{\text{East}}(n,\bar \a) \big) & \text{for the East process,}\\
2q^{-n} \exp\big( - tq / T_{\text{FA}}(n,\bar \a) \big) & \text{for the FA-1f process},      
  \end{cases}
\end{align*}
where the factor $q^{-n}$ comes from $\tilde\nu(\eta) \geq q^n$
and the exponential bounds follow from~\cite{Praga}*{Theorem 4.4}. In particular, the inverse of the exponential rate of decay (in $t$) of the second term in the r.h.s.~of \eqref{eq:A1} is smaller than $T_{\text{East}}(n,\bar\a)/q$ or $T_{\text{FA}}(n,\bar\a)/q,$ depending on which of the two models we are considering.   

We now analyse the first term in the r.h.s.~of \eqref{eq:A1}. Conditionally on {{the event}} $\bigcap_x \big\{ \t_x{{(\o)}} < t \big\}$ and on {{the vector}} $\eta(t) \, {{\in \{0,1\}^n}}$, the
variables $\big\{ \o_x(t) : x\in [n] \big\}$ are independent with law
$\otimes_{x\in [n]}\nu_x\big( \cdot\tc \eta_x(t) \big)$. Thus, if
$g(\eta'):=\nu\big( f \tc \eta' \big)$, then 
\begin{align*}
\bbE_\o\Big( f\big( \o{{(t)}} \big) \cdot \mathds 1_{\{\t_x(\o) \, < \, t\ \forall x\}} \Big) 
& \, = \, \bbE_\o\Big( g\big( \eta(t) \big) \cdot \mathds 1_{\{\t_x(\o) \, < \, t\ \forall x\}} \Big)\\ 
& \, = \, \tilde P_t\,g(\eta) -\, \bbE_\o\Big( g\big(\eta(t) \big) \cdot \mathds 1_{\{\max_x \t_x(\o) \, > \, t\}} \Big)
\end{align*}
where {{$\tilde P_t\,g(\eta) \equiv \tilde \bbE_\eta\big( g \big(\eta(t)\big) \big) = \bbE_\o\big(g\big(\eta(t)\big)\big)$}}. The last term in the r.h.s.~above can be analysed exactly as the
second term in the r.h.s.~of \eqref{eq:A1}. {{Moreover, by the}} Cauchy--{{Schwarz}}
inequality and \eqref{eq:relax:exp:decay}, the first term satisfies
\[
\|\tilde P_t\,g\|_\infty \leq
\frac{1}{\min_\eta \tilde\nu(\eta)} \var_{\tilde \nu}\big(\tilde
P_t\,g\big)^{1/2} \leq \frac{1}{q^n}e^{-\l t}\var_{\tilde \nu}(g)^{1/2},
\]
where $\l$ is either
$T_{\text{East}}(n,\bar\a)^{-1}$ or $T_{\text{FA}}(n,\bar\a)^{-1}$ depending on the
chosen model. This proves~\eqref{eq:A0}, and hence the proposition. \qed

\subsection{Proof of the scaling \eqref{eq:scaling}}

Recall that {{$q := \min\big\{ 1 - \a_x : x \in [n] \big\}$,}} and let $T_{\text{\tiny East}}\big( n,q\big)$ {{and}} $T_{\text{\tiny FA}}\big(n,q\big)$ be the relaxation times {{of}} the {{homogenous East and FA-1f chains}} on $[n]$ with parameters {{$\a_x = 1-q$}} for each ${{x}} \in [n]$. It {{was proved}} in~\cites{CMRT,CFM} that 
\[
T_{\text{\tiny East}}\big( n,q\big)= q^{-O(\min\{ \log n, \,
                                            \log(1/q) \})} \quad
                                            \text{and}\quad T_{\text{\tiny FA}}\big( n,q\big) = q^{-O(1)}. 
\]
Thus, it will {{suffice}} to prove that 
\[
T_{\text{\tiny East}}(n,\bar\a)= {{\frac{1}{q} \, \cdot \,}} T_{\text{\tiny East}}(n,q)\quad\text{ and }\quad
T_{\text{\tiny FA}}(n,\bar\a)= {{\frac{1}{q} \, \cdot \,}} T_{\text{\tiny FA}}(n,q).
\] 
For simplicity we only treat the East case, since the FA-1f case
follows by exactly the same arguments.

{{Consider the generalized East chain on $\O=[0,1]^n$}} in which each
vertex ${{x}} \in [n],$ with rate one and independently across $[n]$, is
resampled from the uniform measure on $[0,1]$ if either {{$x \le n-1$ and $\o_{x+1} \ge 1 - q$, or $x = n$.}} The chain is reversible w.r.t.~the uniform measure $\nu$
on $\O$ and, {{by}} Proposition \ref{lem:gen-Poincare}, {{we have}}
\begin{equation}\label{eq:prop:app:in:app:B}
\var_\nu(f) \leq \frac{1}{q} \cdot T_{\text{\tiny East}}(n,q) \cdot \sum_{{{x}} = 1}^n \nu\big( \vec c_{{x}}\var_{{x}}(f) \big)
\end{equation}
for {{every}} function $f \colon \O\mapsto \bbR$,  
{{since $\nu\big( \o_x \ge 1 - q \big) = q$ for each $x \in [n]$. (Recall that $\vec c_x(\o) = \id_{\{\o_{x+1} \ge 1-q\}}$}} if ${{x}} \le n-1$, and that ${{\vec c}}_n(\o) \equiv 1$.{{)}} 

{{Now, let}} $\eta = \{\eta_{{x}}\}_{{{x \in [n]}}}$ with $\eta_{{x}} := \id_{\{\o_{{x}} < \a_{{x}} \}}$, and, for an arbitrary {{function}} $g \colon \{0,1\}^n \mapsto \bbR$, 
{{set}} $f(\o) := g\big( \eta(\o) \big)$. {{Note that $\eta_x \le \id_{\{\o_x \le 1-q \}}$ (by the definition of $q$), and that}} the law of the variables $\eta$
w.r.t.~$\nu$ is the product Bernoulli measure
$\pi=\text{Ber}(\a_1)\otimes\dots\otimes \text{Ber}(\a_n)$. {{Therefore, applying~\eqref{eq:prop:app:in:app:B}}} to $f$, we {{obtain}}
\[
\var_\pi(g)= 
\var_\nu(f) \leq \frac{1}{q} \cdot T_{\text{\tiny East}}(n,q) \cdot \bigg( \sum_{x=1}^{n-1} \pi\Big( \id_{\{\h_{x+1}=0\}} \var_x(g) \Big) + \pi\big( \var_n(g) \big) \bigg).
\] 
{{The right-hand side of this inequality is exactly $C \cdot \cD(g)$, where $C = 1/q \cdot T_{\text{\tiny East}}(n,q)$ and $\cD(g)$ is the Dirichlet form of $g$ associated to the generator of the non-homogenous East model. Since $g$ was an arbitrary function, it follows by Definition~\ref{def:PC} that $T_{\text{\tiny East}}(n,\bar\a) = 1/q \cdot T_{\text{\tiny East}}(n,q)$, as required. As noted above, the proof that $T_{\text{\tiny FA}}(n,\bar\a) = 1/q \cdot T_{\text{\tiny FA}}(n,q)$ is identical.}}
\qed

\subsection{Proof of Proposition~\ref{lem:MT:prop34}}

We will deduce the proposition from~\cite{MT}*{Theorem 1}. The
deduction is almost exactly the same as that
of~\cite{MT}*{Proposition~3.4}, but for completeness we give the
details. Set $\ell= \big\lceil \log(1/p_2) \big\rceil$, $L = \big\lfloor 1/p_2^2 \big\rfloor$, and for each $i \in \bbZ^2$, define
\[
C_i(\ell) = \bigcup_{k=0}^\ell \big\{ i + \vec e_2 + k\vec e_1 \big\}.
\] 
Let also $\cP_i(\ell,L)$ be the family of oriented paths starting in $C_i(\ell)$ and of length $L$.
We define two families of events $\big\{ A_i^{(1)}, A_i^{(2)}\big\}_{i\in \bbZ^d}$ as follows: 
\begin{align*}
  A_i^{(1)} & = \big\{ \text{$\o_j\in G_1$ for all $j\in C_i(\ell)\cup
             \{i+\vec e_1\}\cup \{i+\vec e_2-\vec e_1\}$} \big\},\\
A_i^{(2)} & = \big\{ \text{there exists a good path in $\cP_i(\ell,L)$ and the smallest 
good one is super-good} \big\},
\end{align*}
{{where, if there is more than one smallest good path, then we choose the leftmost one.}}

Observe that $A_i^{(1)} \cap A_i^{(2)} \subset \G_i$, {{since $A_i^{(1)}$ implies that the smallest good path in $\cP_i(\ell,L)$ starts at $i + \vec e_2$,\footnote{This follows from the observation that the word (of length $L$) obtained from $W \in \{\vec e_1,\vec e_2\}^L$ by adding $\vec e_1$ at the start and removing the final letter is at most $W$ in alphabetical order.} 
and hence is equal to the smallest  path in the definition of $\G_i$.}} We now want to apply~\cite{MT}*{Theorem 1} to the two families of
constraints $\big\{ c_i^{(k)} \big\}_{i\in \bbZ^2}$, where $c_i^{(k)}
:= \id_{\{A_i^{(k)}\}}$ for each $k \in \{1,2\}$. To do so, we need to check the following two conditions:
\begin{itemize}
\item[$(a)$] there exists a two-way infinite sequence of sets $(\ldots,V_{-2},V_{-1},V_0, V_1, V_2, \ldots)$, with $V_n \subset V_{n+1}$ for every $n \in \bbZ$ and $\bigcup_n V_n = \bbZ^2$, such that if $i \not\in V_n$, then the event $A_i^{(k)}$ is independent of the collection of variables $\big\{ \o_i : i \in V_{n+1} \big\}$;
\item[$(b)$] there exists a family $\big\{ \l_I : \emptyset \ne I \subset \{1,2\} \big\}$ of positive constants such that the key condition~\cite{MT}*{equation (2.1)} holds.
\end{itemize}
To see $(a)$, let the sets $V_n$ be all translations of the closed half-space 
$$\ol{\H}_{(1,2)} = \big\{ x \in \Z^2 : \< x,(1,2) \> \le 0 \big\}$$ 
by elements of $\bbZ^2$ (ordered in the obvious way). Now, observe that if $i \not\in V_n$ then $V_{n+1} \subset \ol{\H}_{(1,2)} + i$, and the event $A_i^{(k)}$ is indeed independent of the variables in $\ol{\H}_{(1,2)} + i$.

To prove $(b)$, set $\l_I = 1$ for every non-empty set $I \subset \{1,2\}$, and note that the event $A_i^{(1)}$ depends on $\ell + 3$ variables, and that $A_i^{(2)}$ depends on at most $(L + \ell)^2$ variables. It follows that there exists a constant $\hat \d > 0$ such that~\cite{MT}*{equation (2.1)} holds if 
\begin{equation}
  \label{eq:A10}
\ell \Big( 1 - \mu\big( A_i^{(1)} \big) \Big) + (L + \ell)^2 \Big( 1 - \mu\big( A_i^{(2)} \big) \Big) \leq \hat \d.
\end{equation}
We now claim that if the constant $\d$ of Proposition~\ref{lem:MT:prop34} is chosen to be sufficiently small, then~\eqref{eq:A10} holds. In order to prove this, it is enough to observe that, by the union bound, 
\[
1 - \mu\big( A_i^{(1)} \big) \leq (\ell + 3)(1-p_1),
\] 
and that
\begin{align*}
1 - \mu\big( A_i^{(2)} \big) & \leq \, \mu\big(\text{there is no good path in } \cP_i(\ell,L) \big)\\
& \hspace{1.5cm} + \max_{\g \in \cP_i(\ell,L)} \mu\big( \g \text{ is not super-good} \, \big| \, \g \text{ is good} \big)\\
& \leq \, e^{-m(p_1)\ell} + \big( 1 - p_2 \big)^L,  
\end{align*}
with $\lim_{p_1 \to 1} m(p_1) = \infty,$ by a standard Peierls bound and by the FKG inequality. 
In conclusion, if $\d > 0$ is sufficiently small then we may apply~\cite{MT}*{Theorem 1}, which gives
\[
\var(f)\leq 4 \sum_{i\in \bbZ^2}\mu\Big(\id_{\{A_i^{(1)}\cap A_i^{(2)}\}}\var_i(f)\Big) \leq 4 \sum_{i\in \bbZ^2}\mu\Big(\id_{\G_i}\var_i(f)\Big),
\]
where the final inequality holds because $A_i^{(1)} \cap A_i^{(2)}\subset \G_i$.

 \qed
 \begin{bibdiv}
 \begin{biblist}


\bib{Aldous}{article}{
      author={Aldous, D.},
      author={Diaconis, P.},
       title={The asymmetric one-dimensional constrained {I}sing model:
  rigorous results},
        date={2002},
     journal={J. Stat. Phys.},
      volume={107},
      number={5-6},
       pages={945\ndash 975},
}

\bib{FH}{article}{
  title = {Kinetic Ising Model of the Glass Transition},
  author = {Andersen, Hans C.},
  author = {Fredrickson, Glenn H.},
  journal = {Phys. Rev. Lett.},
  volume = {53},
  number = {13},
  pages = {1244--1247},
  date = {1984},
}

\bib{DaiPra}{article}{
author = {Asselah, A.},
author={Dai Pra, P.},
title = {{Quasi-stationary measures for conservative dynamics in the infinite lattice}},
journal = {Ann. Prob.},
volume={29},
number={4},
pages={1733--1754},
year = {2001}
}

\bib{BBPS}{article}{
author={Balister, P.},
author={Bollob\'as, B.},
author={Przykucki, M.J.},
author={Smith, P.},
journal={Trans. Amer. Math. Soc.},
title={Subcritical $\cU$-bootstrap percolation models have non-trivial phase transitions},
pages={7385--7411},
volume={368},
year={2016},
}



\bib{BDMS}{article}{
author= {B.~Bollob\'as}, 
author= {H.~Duminil-Copin}, 
author= {R.~Morris}, 
author= {P.~Smith}, 
title={Universality of two-dimensional critical cellular automata}, 
journal={to appear in \emph{Proc. London Math. Soc.}},
year={2016},
eprint = {arXiv.org:1406.6680},
}

\bib{BCMS-Duarte}{article}{
	Author = {Bollob\'as, Bela},
author= {Duminil-Copin, Hugo},
author= {Morris, Robert},
author= {Smith, Paul},
		Title = {{The sharp threshold for the Duarte model}},
journal = {Ann. Prob.},
year = {2017},
volume = {45},
pages = {4222--4272}
}

\bib{BCMRT}{article}{
   author={Blondel, O.},
   author={Cancrini, N.},
   author={Martinelli, F.},
   author={Roberto, C.},
   author={Toninelli, C.},
   title={Fredrickson-Andersen one spin facilitated model out of
   equilibrium},
   journal={Markov Proc. Rel. Fields},
   volume={19},
   date={2013},
   pages={383--406},
}


\bib{BSU}{article}{
title={Monotone cellular automata in a random environment},
author={Bollob\'as, B.},
author={Smith, P.},
author = {Uzzell, A.},
journal={Combin. Probab. Comput.},
volume={24},
year={2015},
number={4},
pages={687--722},
}

\bib{Praga}{article}{
   author={Cancrini, N.},
   author={Martinelli, F.},
   author={Roberto, C.},
   author={Toninelli, C.},
   title={Facilitated spin models: recent and new results},
   conference={
      title={Methods of contemporary mathematical statistical physics},
   },
   book={
      series={Lecture Notes in Math.},
      volume={1970},
      publisher={Springer},
      place={Berlin},
   },
   date={2009},
   pages={307--340},
}


\bib{CMRT}{article}{
      author={Cancrini, N.},
      author={Martinelli, F.},
      author={Roberto, C.},
      author={Toninelli, C.},
       title={Kinetically constrained spin models},
        date={2008},
     journal={Prob. Theory Rel. Fields},
      volume={140},
      number={3-4},
       pages={459\ndash 504},
  url={http://www.ams.org/mathscinet/search/publications.html?pg1=MR&s1=MR2365481},
}

\bib{CMST}{article}{
      author={Cancrini, N.},
      author={Martinelli, F.},
      author={Schonmann, R.},
      author={Toninelli, C.},
       title={Facilitated oriented spin models: some non equilibrium results},
        date={2010},
        ISSN={0022-4715},
     journal={J. Stat. Phys.},
      volume={138},
      number={6},
       pages={1109\ndash 1123},
         url={http://dx.doi.org/10.1007/s10955-010-9923-x},
}


\bib{CLR}{article}{
author={Chalupa, J.},
author={Leath, P.L.},
author={Reich, G.R.}, 
title={Bootstrap percolation on a Bethe latice}, 
journal={J. Physics C}, 
volume={12},
pages={L31--L35},
year={1979},
}

\bib{CFM3}{article}{
      author={Chleboun, Paul},
      author={Faggionato, Alessandra},
      author={Martinelli, Fabio},
       title={Mixing time and local exponential ergodicity of the
         East-like process in {$\bbZ^d$}},
       year ={2015},
     journal={Annales de la Facult{\'e} des Sciences de Toulouse:
       Math\'ematiques, S\'erie 6,},
     volume={24},
number={4},
       pages={717--743},
}

\bib{CFM}{article}{
      author={Chleboun, Paul},
      author={Faggionato, Alessandra},
      author={Martinelli, Fabio},
       title={{Time scale separation and dynamic heterogeneity in the low
  temperature East model}},
       year ={2014},
     journal={Commun. Math. Phys. },
     volume={328},
       pages={955-993},
}


\bib{CFM2}{article}{
  author={Chleboun, Paul},
author ={Faggionato, Alessandra},
author={Martinelli, Fabio},
  title={Relaxation to equilibrium of generalized East processes on $\bbZ^d$: Renormalisation group analysis and energy-entropy competition},
journal = {Ann. Prob.},
volume={44},
number={3},
pages={1817--1863},
year = {2016}
}


 \bib{CDG}{article}{
       author={Chung, F.},
       author={Diaconis, P.},
       author={Graham, R.},
        title={Combinatorics for the East model},
         date={2001},
      journal={Adv. Appl. Math.},
       volume={27},
       number={1},
        pages={192\ndash 206},
   url={http://www.ams.org/mathscinet/search/publications.html?pg1=MR&s1=MR1835679},
 }






\bib{Duarte}{article}{
	Author = {J.A.M.S.~Duarte},
	Journal = {Phys. A.},
	Number = {3},
	Pages = {1075--1079},
	Title = {{Simulation of a cellular automaton with an oriented bootstrap rule}},
	Volume = {157},
	Year = {1989}}


\bib{DC-Enter}{article}{
author = {Duminil-Copin, Hugo},
author={A.C.D.~van Enter},
title = {{Sharp metastability threshold for an anisotropic bootstrap percolation model}},
journal = {Ann. Prob.},
year = {2013},
volume = {41},
pages = {1218--1242},
}

\bib{DPEH}{article}{
author = {Duminil-Copin, Hugo},
author={A.C.D~van Enter},
author = {Hulshof, Tim},
title={{Higher order corrections for anisotropic bootstrap percolation}},
year = {2016},
eprint = {arXiv.org:1611.03294},
}

\bib{vanEnter}{article}{
author={A.C.D.~van Enter},
title={Proof of Straley's argument for bootstrap percolation},
date={1987},
journal={
J. Stat. Phys.},
volume={48},
 pages={943\ndash 945}}

	
\bib{East-review}{article}{
      author={Faggionato, Alessandra},
      author={Martinelli, Fabio},
      author={Roberto, Cyril},
      author={Toninelli, Cristina},
       title={{The {E}ast model: recent results and new progresses}},
        date={2013},
     journal={Markov Proc. Rel. Fields},
volume={19},
pages={407--458},
}

\bib{FMRT-cmp}{article}{
      author={Faggionato, A.},
      author={Martinelli, F.},
      author={Roberto, C.},
      author={Toninelli, C.},
       title={Aging through hierarchical coalescence in the East model},
        date={2012},
        ISSN={0010-3616},
     journal={Commun. Math. Phys.},
      volume={309},
       pages={459\ndash 495},
         url={http://dx.doi.org/10.1007/s00220-011-1376-9},
}

\bib{GarrahanSollichToninelli}{article}{
      author={J.~P.~Garrahan},
      author={Sollich, P.},
      author={Toninelli, C.},
       title={Kinetically constrained models},
        date={2011},
     journal={in ``Dynamical heterogeneities in glasses, colloids, and granular
  media" (Eds.: L. Berthier, G. Biroli, J.-P. Bouchaud, L. Cipelletti and W. van Saarloos), Oxford Univ. Press},
}

\bib{JACKLE}{article}{
      author={J\"{a}ckle, J.},
      author={Eisinger, S.},
       title={A hierarchically constrained kinetic {I}sing model},
        date={1991},
     journal={Z. Phys. B: Condensed Matter},
      volume={84},
      number={1},
       pages={115\ndash 124},
}



 \bib{Levin-2008}{book}{
      author={D.~A.~Levin},
      author={Peres, Y.},
      author={E.~L.~Wilmer},
       title={{M}arkov chains and mixing times},
   publisher={American Mathematical Society},
        date={2008},
}

\bib{Liggett}{book}{
      author={Liggett, T.M.},
       title={Interacting particle systems},
   publisher={Springer-Verlag},
     address={New York},
        date={1985},
}



\bib{MT}{article}{
author={Martinelli,Fabio},
author={Toninelli, Cristina},
title={Towards a universality picture for the relaxation to equilibrium of kinetically constrained models},
journal={to appear in Ann. Prob.},
date={2018},
 eprint={arXiv.org:1701.00107},
}

\bib{MMT}{article}{
author = {{Mar{\^e}ch\'e}, Laure},
author={Martinelli,Fabio},
author={Toninelli, Cristina},
title={Energy barriers and the infection time for the kinetically
  constrained Duarte model},
year={in preparation}, 
}

\bib{Mountford}{article}{
	Author = {Mountford, T.S.},
	Journal = {Stoch. Proc. Appl.},
	Number = {2},
	Pages = {185--205},
	Title = {{Critical length for semi-oriented bootstrap percolation}},
	Volume = {56},
	Year = {1995}}


\bib{Robsurvey}{article}{
author = {Morris, Robert},
title = {{Bootstrap percolation, and other automata 
}},
journal={Europ. J. Combin.},
Pages = {250--263},
	Volume = {66},
	Year = {2017}
}






\bib{Ritort}{article}{
      author={Ritort, F.},
      author={Sollich, P.},
       title={Glassy dynamics of kinetically constrained models},
        date={2003},
     journal={Adv. Physics},
      volume={52},
      number={4},
       pages={219\ndash 342},
}

\bib{Saloff}{book}{
      author={Saloff-Coste, Laurent},
      editor={Bernard, Pierre},
       title={Lectures on finite {M}arkov chains},
      series={Lecture Notes in Mathematics},
   publisher={Springer Berlin Heidelberg},
        date={1997},
      volume={1665},
        ISBN={978-3-540-63190-3},
         url={http://dx.doi.org/10.1007/BFb0092621},
}

\bib{BPd}{article}{
author={Schonmann, R.},
title={On the behaviour of some cellular automata related
to bootstrap percolation},
date={1992},
journal={ Ann. Prob.},
volume={20}, 
pages={174\ndash 193}}
\bib{SE2}{article}{
      author={Sollich, P.},
      author={Evans, M.R.},
       title={Glassy time-scale divergence and anomalous coarsening in a
  kinetically constrained spin chain},
        date={1999},
     journal={Phys. Rev. Lett},
      volume={83},
       pages={3238\ndash 3241},
}



\bib{AS}{article}{
author={Pillai, N.S.},
author={Smith, A.},
date= {2017},
title = {{Mixing times for a constrained Ising process on the torus at low density}},
journal={Ann. Prob.},
volume={45},
number={2},
pages={1003--1070},
}


 \end{biblist}
 \end{bibdiv}

\end{document}